\renewcommand{\epsilon}{\varepsilon}
\renewcommand{\setminus}{\smallsetminus}
\renewcommand{\emptyset}{\varnothing}
\newtheorem{theorem}{Theorem}[section]
\newtheorem{proposition}[theorem]{Proposition}
\newtheorem{corollary}[theorem]{Corollary}
\newtheorem{lemma}[theorem]{Lemma}
\theoremstyle{definition}
\newtheorem{example}[theorem]{Example}
\newtheorem{definition}[theorem]{Definition}
\newtheorem{remark}[theorem]{Remark}
\newcommand{\Z}{\mathbb Z}
\newcommand{\R}{\mathbb R}
\newcommand{\F}{\operatorname{F}}
\newcommand{\FP}{\operatorname{FP}}
\newcommand{\cohom}[3]{H^{{\raise1pt\hbox{$\scriptstyle#1$}}}(#2\>\!,#3)}
\newcommand{\tatecohom}[3] {\widehat H^{{\raise1pt\hbox{$\scriptstyle#1$}}}(#2\>\!,#3)}
\newcommand{\Cohom}[3] {H^{{\raise1pt\hbox{$\scriptstyle#1$}}}\big(#2\>\!,#3\big)}
\newcommand{\Tatecohom}[3]{\widehat H^{{\raise1pt\hbox{$\scriptstyle#1$}}}\big(#2\>\!,#3\big)}
\newcommand{\homol}[3]{H_{{\lower1pt\hbox{$\scriptstyle#1$}}}(#2\>\!,#3)}
\newcommand{\homolog}[2]{H_{{\lower1pt\hbox{$\scriptstyle#1$}}}(#2)}
\title[Finiteness properties of $2V$ and $3V$]{Cohomological finiteness properties of the Brin-Thompson-higman groups  $2V$ and $3V$}
\author{D.~ H. ~Kochloukova}
\address{Dessislava H.~Kochloukova, Department of Mathematics, University of Campinas, Cx. P. 6065,
13083-970 Campinas, SP, Brazil}\email{desi@unicamp.br}
\author{C.~Mart\'inez-P\'erez}
\address{Conchita Mart\'inez-P\'erez, Departamento de Matem\'aticas, Universidad de Zaragoza,
50009 Zaragoza, Spain} \email{conmar@unizar.es}
\author{B.~ E.~A.~Nucinkis}
\address{Brita E.~A.~Nucinkis, School of Mathematics, University of Southampton, Southampton,
SO17 1BJ, United Kingdom}\email{bean@soton.ac.uk}
\thanks{This work was partially supported by Royal Society Grant TG080182 and LMS Scheme 4 Grant 4814. The first named author was partially supported by "bolsa de produtividade em pesquisa" from CNPq, Brazil. The second named author was supported by Gobierno de Aragon and
MTM2010-19938-C03-03. }
\date{\today} 
\keywords{}
\thanks{}
\begin{document}

\begin{abstract} We show that Brin's generalisations $2V$ and $3V$ of the Thompson-Higman group $V$ are of type $\FP_\infty$. Our methods also give a new proof that both groups are  finitely presented.
\end{abstract}

\maketitle
\thispagestyle{empty}

\section{Introduction}

In this paper we study cohomological finiteness conditions of certain generalisations of Thompson's group $V$, which is a simple, finitely presented group of homeomorphisms of the Cantor-set $C$. The finiteness conditions we consider, are the
homotopical finiteness property $\F_\infty$ for a group, which was first defined
by C.T.C.Wall, and its homological version $\FP_\infty$, which was
studied in detail in \cite{Bieribook}.  We say that a group $G$ is of type $\F_\infty$ if it admits a $K(G,1)$ with finite $k$-skeleton in all dimensions $k$. A group is of type $\FP_\infty$ if the trivial $\Z G$-module $\Z$ has a resolution with finitely generated projective $\Z G$-modules. A group is of type $\F_\infty$ if and only if it is of type $\FP_\infty$ and is finitely presented. There are, however, examples of groups of type $\FP_\infty$, which are not finitely presented \cite{Bestvina}.

\noindent In \cite{Brown2} K.S. Brown showed that Thompson's
groups $F$, $T$ and $V$ as well as some generalisations such as Higman's groups $V_{n,r}$ (see  \cite{Higman}) are  of type $\F_\infty$. The idea there is to express these groups as groups of algebra-automorphisms and let them act on a poset determined by the algebra. It is then shown that the geometric realisation of this poset yields the required finiteness properties.


\noindent
In \cite{Brin} M.~Brin defined a  group $sV$  generalising
$V$  for every natural number $s \geq 2$.  Analogously to $V$, these groups are defined  as subgroups of the homeomorphism group
of a finite Cartesian product of the Cantor-set. For each
$s$, the group $sV$ is simple, finitely presented and
contains a copy of every finite group \cite{Brin2, BHM}. It
was also shown in \cite{BL} that for $s \neq t$, $sV$ is not isomorphic to $tV.$

\noindent Our main result is the following:

\medskip\noindent{\bf Main Theorem.} {\sl Brin's groups $2V$ and $3V$ are of type $\F_\infty.$}

\medskip\noindent
The proof of the Main Theorem is split in two parts : Theorems \ref{main2} and \ref{s=3}.
We partially follow the proof of \cite{Brown2}
that $V$ has type $\F_{\infty}$. Our proof is  more intricate, as the fact that some particular complex $K_Y$ is
$t$-connected if $Y$ is sufficiently large requires more
work than in Brown's proof. 
As in \cite{Brown2} we view $sV$ as a group of algebra automorphisms and consider a
poset  $\frak A$ on which $sV$ acts.
This action has the following properties:

\begin{enumerate}
\item  Vertex stabilisers are finite.
\item  The complex $| \frak A |$ is contractible.
\item  There is a filtration  $\{ |\frak A_n| \}_{n \geq 1}$  of $sV$-subcomplexes of $| \frak A |$ such that  each complex $|\frak A_n|$ is finite modulo $sV$.
\item For $s=2$ and $s=3$ the connectivity of the pair  of complexes $(|\frak A_{n+1}|,|\frak A_n|)$ tends to infinity as $n\to\infty$.
\end{enumerate}

\noindent We then apply Brown's criterion \cite[Cor.~3.3]{Brown2} to conclude that $2V$ and $3V$ are finitely presented and
of type $\F_{\infty}$.
The key result towards the proof of our main theorem for $s = 2$ is
Theorem \ref{pushing}. Finally, in the last section,  we prove Theorem \ref{modified3} as a variation  of Theorem
\ref{pushing} and show that the  method above can be applied for  $s = 3$.


\section{Construction of the algebra and the group}

\noindent In this section we shall define the generalised Higman Algebra, also
called Cantor-Algebra, in a general setting. We then define $sV$ as a group of  automorphisms of this Algebra.

Consider a finite set $\{1,\ldots,s\}$. We call its elements colours.
Also consider  a finite set of integers $\{n_1,\ldots,n_s\}$, $n_i>1$. We call each $n_i$ the arity of the colour $i$. We begin by defining an $\Omega$-algebra $U$. For details the reader is referred to \cite{Cohn}. We say $U$ is an $\Omega$-algebra, if, for each colour $i$, the following operations are defined in $U$:
 \begin{itemize}
\item[i)] One $n_i$-ary operation $\lambda_i$:
$$\lambda_i :U^{n_i}\to U.$$
We call these  operations ascending operations, or contractions.

\item[ii)] $n_i$ 1-ary operations $\alpha^1_i,\ldots,\alpha^{n_i}_i$:
$$\alpha^j_i:U\to U.$$
We call these operations 1-ary descending operations.

 \end{itemize}
 \noindent Throughout this paper all operations act on the right.
By definition, $\Omega = \{ \lambda_i, \alpha_i^j \}_{i,j}$. In what follows it will be convenient to consider the following map, which we also call operation:  For each colour $i$, and any $v\in U$, we denote
$$v\alpha_i:=(v\alpha^1_i,v\alpha^2_i,\ldots,v\alpha^{n_i}_i).$$
Therefore $\alpha_i$ is a map
$$\alpha_i:U\to U^{n_i}.$$
We call these maps descending operations, or
expansions. Unless  otherwise stated, whenever we use the term  \lq\lq descending operation",  we refer
to one of the $\alpha_i$.

For any subset $Y$ of $U$, a simple expansion of colour $i$ of $Y$ consists of substituting some element $y\in Y$ by the $n_i$ elements of the tuple  $y\alpha_i$. And a simple contraction of colour $i$ of $Y$ is the set obtained by substituting a certain collection of $n_i$ distinct elements of $Y$, say $\{a_1,\ldots,a_{n_i}\}$, by $(a_1,\ldots,a_{n_i})\lambda_i$. We also use the word operation to refer to the effect of a simple expansion, respectively contraction on a set .

A morphism between $\Omega$-algebras is a map  commuting
with all operations in $\Omega$.
Let $\frak B_0$ be a category of $\Omega$-algebras.
An object  $U_0(X)\in\frak B_0$ is a free object in  $\frak
B_0$ with $X$ as a
{\it free basis}  if for any $S\in\frak B_0$  any mapping
$$\theta:X\to S$$
can be extended in a unique way to a morphism
$$U_0(X)\to S.$$
We also say $U_0(X)$ is free on $X$ in the category $\frak B_0$.
\noindent Following \cite[III.2]{Cohn}, we  construct the
free object on any set $X$ in the category of all
$\Omega$-algebras as follows: take the set of finite
sequences of elements of the disjoint union $\Omega\cup X$
with the $\Omega$-algebra structure defined by
juxtaposition.  Then $U_0(X)$ is  the sub $\Omega$-algebra generated by $X$.

\begin{definition} The free object constructed above is called the $\Omega$-word algebra and denoted
$W_\Omega(X)$. An {\it admissible} subset is any $Y\subset
W_\Omega(X)$, which can be obtained from $X$ by a finite
number of operations $\alpha_i$ and $\lambda_j$,  i.e. by a finite number of simple contractions or expansions.
\end{definition}


\noindent Now we   consider the variety of $\Omega$-algebras satisfying a certain set of identities.

\begin{definition} Let $\Sigma_1$ be the following set of
  laws in a countable (possibly finite) alphabet $X$.

\begin{itemize}

\item[i)] For any $u\in W_\Omega(X)$ and any colour $i$, $$u\alpha_i\lambda_i=u.$$

\item[ii)] For any colour $i$ and any $n_i$-tuple $(u_1,\ldots,u_{n_i})\in W_\Omega(X)^{n_i},$
$$(u_1,\ldots,u_{n_i})\lambda_i\alpha_i=(u_1,\ldots,u_{n_i}).$$

\end{itemize}
\end{definition}

\noindent
The variety $\frak V_1$ of $\Omega$-algebras which satisfy
the identities in $\Sigma_1$, obviously contains nontrivial
algebras. Hence it is a nontrivial variety. Therefore by
\cite[IV 3.3]{Cohn} it contains free algebras on any set
$X$. Let $U_1(X)$ be the free $\Omega$-algebra on $X$ in
$\frak V_1$. Moreover, by the proof of \cite[IV 3.1]{Cohn}
$$U_1(X)=W_\Omega(X)/\frak q_1,$$
 where $\frak q_1$ is the fully invariant congruence generated by $\Sigma_1$, i.e. the smallest equivalence set in $W_\Omega(X)\times W_\Omega(X)$ containing $\Sigma_1$, which admits any endomorphism of $W_\Omega(X)$ and is $\Omega$-closed (see \cite[IV Section 1]{Cohn}). In fact there is an epimorphism
$$\theta_1:W_\Omega(X)\to U_1(X)$$
and $\frak q_1$ corresponds precisely to $\text{Ker}(\theta_1)$.

\begin{definition}
Let $U\in\frak V_1$ and let $Y$ be a subset of $U$.
A set $Z$ obtained from $Y$ by a finite number of simple expansions is called a descendant of $Y$. In this case we denote
$$Y\leq Z.$$
 Conversely, $Y$ is called an ascendant of $Z$ and can be obtained after a finite number of simple contractions.

\end{definition}

\noindent
In what follows  we will consider $\Omega$-algebras
satisfying  some additional identities as described below.

\begin{definition}\label{validid1} Let $\Sigma$ be the set of identities
$$\Sigma=\Sigma_1\cup\{r_{ij}\mid 1\leq i<j\leq s\},$$
where $r_{ij}$ consists of certain identifications between sets of simple expansions of
$w\alpha_i$ and $w\alpha_j$ for any $w\in W_{\Omega}(X)$ which do not depend on $w$.

\noindent
Let $X$ be a set and $U(X)=U_1(X)/\frak q$ where $\frak q$ is the fully invariant congruence generated by $\Sigma$. There is an  epimorphism
$$\begin{aligned}
 \theta_2 : U_1(X)&\twoheadrightarrow U(X)\\
a_1&\mapsto \bar a_1.\\
\end{aligned}$$
Let $\theta:W_\Omega(X)\to U(X)$ be the composition of
$\theta_1$ with $\theta_2$. We say that a subset $Y$ of
$U_1(X)$ or of $U(X)$ is {\it admissible} if it is the image by $\theta_1$ or $\theta$ of an admissible subset of $W_\Omega(X)$.
We call the set of identities $\Sigma$ {\it valid} if the following condition holds:
for any admissible set $Y\subseteq U_1(X)$ we have $|Y|=|\bar Y|$, i.e.  $\theta_2$ is injective on admissible subsets.


\noindent
Let $\frak V$ be the variety of all $\Omega$-algebras which satisfy the identities in a valid $\Sigma$.  Note that $\frak V$ contains nontrivial $\Omega$-algebras, so it has free objects on every set $X$. In fact, the algebra $U(X)$ above is a free object on $X$.
\end{definition}

\begin{lemma}\label{admfree} Any admissible subset is a free
  basis in $W = U( X )$.
\end{lemma}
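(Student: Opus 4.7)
\noindent My plan is to proceed by induction on the number $n$ of simple operations (expansions and contractions) used to produce the admissible set $Y$ from $X$. The base case $n=0$ is $Y=X$, which is a free basis of $U(X)$ in $\mathfrak V$ by the construction just given. So the task reduces to showing: if $Y$ is a free basis in $U(X)$ and $Z$ is obtained from $Y$ by one further simple expansion or contraction, then $Z$ is a free basis too. Fix $S\in\mathfrak V$ and a set map $\phi\colon Z\to S$; I have to produce a unique morphism $\Phi\colon U(X)\to S$ extending $\phi$.

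\medskip
\noindent In the expansion case, write $Z=(Y\setminus\{y\})\cup\{y\alpha_i^1,\ldots,y\alpha_i^{n_i}\}$, define an auxiliary map $\psi\colon Y\to S$ by $\psi(y')=\phi(y')$ for $y'\neq y$ and
\[
\psi(y)=\bigl(\phi(y\alpha_i^1),\ldots,\phi(y\alpha_i^{n_i})\bigr)\lambda_i,
\]
and invoke the inductive hypothesis to extend $\psi$ uniquely to a morphism $\Psi\colon U(X)\to S$. Identity (ii) of $\Sigma_1$, which holds in $S$, then gives $\Psi(y\alpha_i^j)=\psi(y)\alpha_i^j=\phi(y\alpha_i^j)$, so $\Psi$ extends $\phi$. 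Uniqueness follows from uniqueness of $\psi$: any morphism $\Phi'$ extending $\phi$ must send $y=y\alpha_i\lambda_i$ (using identity (i) of $\Sigma_1$) to $\psi(y)$, hence restricts to $\psi$ on $Y$, hence equals $\Psi$. The contraction case is dual: write $Z=(Y\setminus\{y_1,\ldots,y_{n_i}\})\cup\{z\}$ with $z=(y_1,\ldots,y_{n_i})\lambda_i$, set $\psi(y_j)=\phi(z)\alpha_i^j$ and $\psi(y')=\phi(y')$ otherwise, extend $\psi$ to $\Psi\colon U(X)\to S$ by induction, and verify $\Psi(z)=\phi(z)\alpha_i\lambda_i=\phi(z)$ using identity (i) of $\Sigma_1$.

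\medskip
\noindent I do not expect a serious technical obstacle: the entire argument rests on the mutually inverse identities of $\Sigma_1$, which by hypothesis hold in every object of $\mathfrak V$, so the auxiliary map $\psi$ can always be defined and the induced extension $\Psi$ automatically takes the required values on the new generators of $Z$. The additional identities $r_{ij}$ and the validity hypothesis on $\Sigma$ enter only implicitly, via Definition \ref{validid1}: they guarantee that an admissible $Y\subseteq U(X)$ genuinely has the cardinality of the corresponding admissible set in $W_\Omega(X)$, so that $\phi$ and $\psi$ are honest set maps rather than being forced to collapse distinct elements. The conceptual point is simply that the $\Sigma_1$-identities encode exactly the substitution rule by which one passes between free bases related by a single expansion or contraction, and this substitution rule is inherited by any morphism to any $S\in\mathfrak V$.
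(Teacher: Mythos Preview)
Your proof is correct and follows essentially the same approach as the paper's: an induction on the number of simple operations, with the inductive step handled by defining the auxiliary map $\psi$ on the previous basis via $\psi(y)=(\phi(y\alpha_i^1),\ldots,\phi(y\alpha_i^{n_i}))\lambda_i$ in the expansion case (and dually for contraction), then invoking freeness of that basis. Your write-up is in fact more explicit than the paper's---you spell out which $\Sigma_1$-identity is used for existence versus uniqueness, and you note where validity of $\Sigma$ enters---but the underlying argument is identical.
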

\begin{proof} This can be proven using the same argument as in \cite{Higman}: Let $X$ be a free basis of $W$, let $i\in\{1,\ldots,s\}$ be any colour of arity $n_i$ and
$$Y=(X\setminus\{x\})\cup\{x\alpha_i^{j}\mid 1\leq j\leq n_i\}.$$
We will show that $Y$ is a free basis of $W$.
Recall that $\frak V$ is the
variety of $\Omega$-algebras satisfying the identities $\Sigma$. Then, given any $S\in\frak V$  and any mapping
$\theta:Y\to S$, there is a unique way to obtain a map
$\theta^*:X\to S$ such that  $\theta^*(\tilde{x}) = \theta(\tilde{x})$ for $\tilde{x} \in X \setminus \{ x \}$ and $\theta^*(x) = ( \theta(x \alpha_i^1), \ldots , \theta(x \alpha_i^{n_i})) \lambda_i$. As
there is a unique $\hat\theta: W\to S$ extending $\theta^*$, the same happens with the original $\theta$.

Analogously, one proves that 
considering $n_i$ distinct elements $x_1,\ldots,x_{n_i}$ of $X$, the admissible subset
$$Y=(X\setminus\{x_1,\ldots,x_{n_i}\})\cup\{(x_1,\ldots, x_{n_i})\lambda_i\}$$
is a free basis of $W$.
\end{proof}

\begin{definition} Consider the set of $s$ colours $\{1,\ldots,s\}$, all of which have arity 2, together with the relations:
$$\Sigma:=\Sigma_1\cup\{\alpha_i^l\alpha_j^t=\alpha_j^t\alpha_i^l\mid
1\leq i \not= j\leq s; l,t=1,2 \}.$$
We call the $\Omega$-algebra $W =U(\{ x_0 \})$, defined by the $\Sigma$ above, the generalised Higman algebra on $s$ colours.
\end{definition}

\begin{remark} \label{remark00} (Geometric interpretation of the generalised Higman
  algebra). Consider the unit cube $\frak C$ of $\mathbb{R}^s$. Fix
a bijection between the set of colours $\{1,\ldots,s\}$ and
the set of hyperplanes which are parallel to the faces of
$\frak C$. To each operation $\alpha_i$ we  associate a
halving using a hyperplane parallel to the hyperplane
corresponding to $i$. In this case  we say we halve in direction $i$. Then, to each side of this halving we
 associate one of the components of $\alpha_i$:
 $\alpha_i^1$ and $\alpha_i^2$.  This association will stay fixed.
 For a sequence of 1-ary descending operations $u=\alpha_{i_1}^{r_1}\ldots\alpha_{i_t}^{r_t}$ with $r_j\in\{1,2\}$
we perform the following operations in $\frak C$: First, halve it in
direction $i_1$ and take the $r_1$-half. Repeat the process
with operation $\alpha_{i_2}^{r_2}$ for this half. At the
end, we get a subset (subparallelepiped) of $\frak C$. For simplicity we call the subparallelepipeds $s$-subcubes or simply $s$-cubes.
Note that at any stage, if $i\neq j$, the effect of
$\alpha_{i}^{r_i}\alpha_{j}^{r_j}$ equals the effect of
$\alpha_j^{r_j}\alpha_i^{r_i}$.

\bigskip
\setlength{\unitlength}{0.5cm}
\begin{picture}(8,8)(-8,0)
\thicklines
\put(0,0){\line(1,0){8}}
\put(8,0){\line(0,1){8}}
\put(8,8){\line(-1,0){8}}
\put(0,8){\line(0,-1){8}}
\put(4,0){\line(0,1){8}}
\put(4,4){\line(1,0){4}}
\put(2,0){\line(0,1){8}}
\put(4,2){\line(1,0){4}}
\put(6,4){\line(0,-1){4}}
\put(6,3){\line(1,0){2}}
\put(7,0){\line(0,1){2}}
\put(4,6){\line(1,0){4}}
\put(4,1){\line(1,0){4}}
\end{picture}

\bigskip
\centerline{Figure 1}

\medskip
The family of $s$-subcubes of the $s$-cube $\frak C$, which can be obtained in this
way corresponds to {\it the set $x(D)$ of descendants of $x$} in the
generalised Higman algebra $U(\{ x_0 \})$, where $x$ is an element belonging to some admissible subset.
Analogously, we may identify any admissible subset $A$ with a collection of $|A|$ $s$-cubes. In particular, the set of descendants of $A$
corresponds to the set of those subsets in the collection of $|A|$ $s$-cubes, which are obtained in the prescribed way.

\bigskip
\end{remark}


 \begin{remark}
In the following diagram  we use two different types of
carets  to visualise the two colours in the generalised Higman algebra on
$2$ colours, each of arity $2$.

\bigskip

\begin{tikzpicture}[scale=1]

  \draw[black, dashed]
    (0,0) -- (1, 1.71) -- (2,0);
  \filldraw(1,1.71) circle (0.1pt);

  \draw (0.5,0.8) node[left=8pt]{$\alpha^1_1$};
   \draw (1.5,0.8) node[right=8pt]{$\alpha^2_1$};
      \filldraw(0,0) circle (0.3pt);
        \filldraw(2,0) circle (0.3pt);

  \draw[black]
    (5,0) -- (6, 1.71) -- (7,0);
  \filldraw(6,1.71) circle (0.1pt);

  \draw (5.5,0.8) node[left=8pt]{$\alpha^1_2$};
   \draw (6.5,0.8) node[right=8pt]{$\alpha^2_2$};
      \filldraw(5,0) circle (0.3pt);
        \filldraw(7,0) circle (0.3pt);

   \end{tikzpicture}


\bigskip

\centerline{Figure  2}

\bigskip
The first type of caret corresponds to vertical cutting and
the second one to horizontal. We view an  admissible set that
is a descendent of an element $x$ as the set of leaves of a rooted tree with
root $x$. The rooted tree is constructed by gluing one of the two
types of carets when passing to descendants.
 The following two rooted trees represent the same admissible set:

\begin{tikzpicture}[scale=0.9]

  \draw[black, dashed]
    (0,0) -- (1, 1.71) -- (2,0);
  \filldraw(1,1.71) circle (0.1pt) node[above=4pt]{$x$};

  \draw[black] (-0.8,-1.71) -- (0,0) --(0.8,-1.71);
  \draw[black] (1.2,-1.71) -- (2,0) --(2.8,-1.71);

      \filldraw (-0.8,-1.71) circle (0.3pt) node[below=4pt]{$1$};
      \filldraw (0.8,-1.71) circle (0.3pt) node[below=4pt]{$2$};
      \filldraw (1.2,-1.71) circle (0.3pt) node[below=4pt]{$3$};
      \filldraw (2.8,-1.71) circle (0.3pt) node[below=4pt]{$4$};

  \draw[black]
    (7,0) -- (8, 1.71) -- (9,0);
 \filldraw(8,1.71) circle (0.1pt) node[above=4pt]{$x$};

  \draw[black, dashed] (6.2,-1.71) -- (7,0) --(7.8,-1.71);
  \draw[black, dashed] (8.2,-1.71) -- (9,0) --(9.8,-1.71);

      \filldraw (6.2,-1.71) circle (0.3pt) node[below=4pt]{$1$};
      \filldraw (7.8,-1.71) circle (0.3pt) node[below=4pt]{$3$};
      \filldraw (8.2,-1.71) circle (0.3pt) node[below=4pt]{$2$};
      \filldraw (9.8,-1.71) circle (0.3pt) node[below=4pt]{$4$};

   \end{tikzpicture}







\bigskip

\centerline{Figure  3}

\bigskip

\noindent Considering the geometric interpretation of the generalised Higman algebra, both of the rooted trees above represent the following subdivision of the square:

\bigskip
\setlength{\unitlength}{0.4cm}
\begin{picture}(7,7)(-12,0)
\thicklines
\put(0,0){\line(0,1){6}}
\put(0,6){\line(1,0){6}}
\put(0,0){\line(1,0){6}}
\put(6,0){\line(0,1){6}}
\put(0,3){\line(1,0){6}}
\put(3,0){\line(0,1){6}}
\put(1,1){$1$}
\put(4,1){$3$}
\put(1,4){$2$}
\put(4,4){$4$}

\end{picture}
\bigskip

\centerline{Figure  4}

\bigskip
\end{remark}

\begin{lemma}  The generalised Higman algebra $W = U(\{ x_0 \})$ is valid.
\end{lemma}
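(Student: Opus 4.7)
The plan is to exploit the geometric interpretation of Remark~\ref{remark00}, where admissible descendants of $x_0$ correspond to $s$-subparallelepipeds of the unit cube $\frak C$. I will construct a concrete $\Omega$-algebra $M$ in $\frak V$ in which these geometric objects are honest elements with a manifest separation property, then transfer this back to $U(\{x_0\})$ via the universal property.

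Explicitly, let $\frak P:=(\{1,2\}^*)^s$, the set of $s$-tuples of finite binary strings, which we think of as $s$-subcubes of $\frak C$. Declare the operations $\alpha_i^j\colon (w_1,\dots,w_s)\mapsto(w_1,\dots,w_i\cdot j,\dots,w_s)$, and $\lambda_i$ on a matching pair (two elements whose $i$-th coordinates are $w\cdot 1$ and $w\cdot 2$ and which agree elsewhere) by returning the common parent. Because appending letters to different coordinates commutes on the nose, the relations $\alpha_i^l\alpha_j^t=\alpha_j^t\alpha_i^l$ hold automatically on $\frak P$. The target $M$ is obtained by extending $\frak P$ to a full $\Omega$-algebra in $\frak V$, adjoining fresh formal elements to support $\lambda_i$ on non-matching tuples without destroying the $\Sigma_1$-identities or the commutation identities.

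By the universal property of $U(\{x_0\})$ as the free object on $\{x_0\}$ in $\frak V$, the assignment $x_0\mapsto (\varepsilon,\dots,\varepsilon)$ yields a morphism $\pi\colon U(\{x_0\})\to M$. Chasing the operations, $\pi$ sends the admissible descendant $x_0\cdot\alpha_{i_1}^{j_1}\cdots\alpha_{i_k}^{j_k}$ to the $s$-tuple whose $i$-th coordinate lists, in order, the superscripts $j_m$ with $i_m=i$. Given an admissible subset $Y\subseteq U_1(\{x_0\})$ represented by a finite rooted coloured tree $T$, any two distinct leaves of $T$ diverge at a unique internal node $v$, of colour $i_v$ say; their paths from the root share a prefix up to $v$ and then take different children $1$ and $2$. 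Projecting onto the $i_v$-th coordinate of $\frak P$ yields two strings with a common prefix and then distinct letters $1$ and $2$ at the next position, hence distinct. Consequently the two $\pi$-images differ in $M$, so the two leaves map to distinct elements of $U(\{x_0\})$ under $\theta_2$, giving $|\bar Y|=|Y|$.

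The main obstacle is the rigorous construction of $M$: the naive recipe of sending every non-matching $\lambda_i$-input to a single junk point violates the second $\Sigma_1$-identity $(u_1,u_2)\lambda_i\alpha_i=(u_1,u_2)$. I would resolve this by building the non-geometric part of $M$ as a free $\frak V$-algebra on the set of formal non-matching $\lambda$-inputs, so that no further collapses are imposed; alternatively, one can bypass the construction of $M$ entirely and define a partial function $\pi_0$ directly on the admissible descendants of $x_0$ in $U_1(\{x_0\})$ valued in $\frak P$, then verify compatibility with the fully invariant congruence $\frak q$ by induction on the generating moves (which are the commutation relations together with their $\Omega$-closure). Once either variant is in place, the tree-divergence argument above finishes the proof.
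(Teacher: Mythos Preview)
Your approach—build a concrete model $M\in\frak V$ containing $\frak P=(\{1,2\}^*)^s$ and separate elements via the universal morphism $\pi\colon U(\{x_0\})\to M$—is more explicit than the paper's, which simply invokes the geometric interpretation of Remark~\ref{remark00} and runs an induction on the number of operations. Both rest on the same picture, but beyond the acknowledged difficulty of constructing $M$ there is a genuine gap in your write-up.

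You only treat admissible $Y\subseteq U_1(\{x_0\})$ that are \emph{descendants} of $\{x_0\}$, i.e.\ those encoded by a rooted coloured tree. Admissible subsets, however, are obtained from $\{x_0\}$ by expansions \emph{and} contractions, and a contraction with a mismatched pair, for instance $z:=(x_0\alpha_1^1,x_0\alpha_1^2)\lambda_2$, produces an element that is not a descendant of $x_0$ in $U_1$. Admissible sets descending from such a $z$ are not represented by trees rooted at $x_0$, and your $\pi$ sends them into the non-geometric part of $M$, where the tree-divergence argument does not apply as stated. The paper sidesteps this by first observing that \emph{any} admissible $Y$ is a descendant of \emph{some} singleton $\{y\}$ (contract $Y$ down to one element in any way; by $\Sigma_1$ the result has $Y$ as a descendant), and then applying the geometric interpretation to descendants of $\bar y$ rather than of $x_0$. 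This yields $|\bar Z|=|\bar Y|+1$ whenever $Z$ is a simple expansion of $Y$; since contraction is inverse to expansion, induction on the number of operations from $\{x_0\}$ gives the result for all admissible $Y$. Your argument can be completed along the same lines: since any admissible singleton $\{\bar y\}$ is itself a free basis of $U(\{x_0\})$ by Lemma~\ref{admfree}, a fresh morphism $\pi_y\colon U(\{x_0\})\to M$ with $\bar y\mapsto(\varepsilon,\ldots,\varepsilon)$ lets you run your divergence argument verbatim for descendants of $\bar y$.
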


\begin{proof}
To begin we claim that  for any pair of admissible subsets
$Y$ and $Z\subseteq U_1(\{x_0\})$, such that $Z$ is obtained from $Y$ after a simple expansion, we have  $|\bar Z|=|\bar Y|+1$. Recall that $\bar{Z}$ and $\bar{Y}$ are the images of $Z$ and $Y$ in $ U(\{ x_0 \})$.  Any admissible set in $U_1(\{x_0\})$ is a descendant of an admissible set with only one element, say $y$. So for $x = \bar{y}$ we have that  $\bar Z, \bar Y\in x(D)$, where $x(D)$ is as defined in Remark \ref{remark00}. Using the geometric interpretation of $x(D)$ as a subdivision of an $s$-cube we get the claim.

Conversely, if $Z$ is a simple contraction of $Y$ then $Y$
is a simple expansion of $Z$. Thus $|\bar Y|=|\bar
Z| + 1$.

\noindent Finally, an induction on the number of simple contractions and expansions
needed to obtain an admissible subset $\bar{Y} \subseteq
U(\{ x_0 \})$ from $\{x_0\}$ yields the result.
\end{proof}

\begin{definition}
The
 Brin-Thompson-Higman group on $W_0 = U(X)$, which we denote $G(W_0)$, is
  the group of algebra automorphisms of $W_0$ which are
  induced by a bijection $Z\to Y$ for any two admissible
  sets $Z$ and $Y$ of the same cardinality. If $W$ is the generalised Higman algebra $U(\{ x_0 \})$, then
  $G(W)$ is the Brin group on $s$ colours and is denoted $sV$.
\end{definition}

The following diagram  illustrates an element $g$ of
$2V$ sending each leaf to the leaf with the same label.

\bigskip
\begin{tikzpicture}[scale=0.9]

  \draw[black, dashed]
    (0,0) -- (1, 1.71) -- (2,0);

  \draw[black] (-0.8,-1.71) -- (0,0) --(0.8,-1.71);

      \filldraw (-0.8,-1.71) circle (0.3pt) node[below=4pt]{$1$};
      \filldraw (0.8,-1.71) circle (0.3pt) node[below=4pt]{$2$};
      \filldraw (2,0) circle (0.3pt) node[below=4pt]{$3$};

\draw[black] (4.5, 0)   node{$\longrightarrow$};
\draw[black] (4.5, 0) node[above=4pt]{$g$};

  \draw[black]
    (7,0) -- (8, 1.71) -- (9,0);

  \draw[black] (8.2,-1.71) -- (9,0) --(9.8,-1.71);

      \filldraw (7,0) circle (0.3pt) node[below=4pt]{$1$};
          \filldraw (8.2,-1.71) circle (0.3pt) node[below=4pt]{$3$};
      \filldraw (9.8,-1.71) circle (0.3pt) node[below=4pt]{$2$};

   \end{tikzpicture}



\bigskip

\centerline{Figure 5}

\bigskip


\begin{remark} Looking at the geometric interpretation of the generalised Higman algebra, Section 2.3  of \cite{Brin} implies that this is exactly the definition of Brin's generalisation $2V$ of $V$ as a group of self-homeomorphisms of $C\times C$, where $C$ denotes the Cantor-set. The element $g$ in Figure 5 corresponds to the following picture:

\bigskip
\begin{tikzpicture}[scale=1]

\draw[black] (0,0) --(2,0) --(2,2) -- (0,2) --(0,0);
\draw[black] (1,0) --(1,2) ;
\draw[black] (0,1) --(1,1);
\draw[black] (0.5, 1.5) node{$2$};
\draw[black] (0.5, 0.5) node{$1$};
\draw[black] (1.5, 1) node{$3$};

\draw[black] (4.5, 1)   node{$\longrightarrow$};
\draw[black] (4.5, 1) node[above=4pt]{$g$};

\draw[black] (7,0) --(9,0) --(9,2) -- (7,2) --(7,0);
\draw[black] (7,1) --(9,1) ;
\draw[black] (7,1.5) --(9,1.5);
\draw[black] (8, 0.5) node{$1$};
\draw[black] (8, 1.75) node{$2$};
\draw[black] (8, 1.25) node{$3$};

\end{tikzpicture}

\bigskip

\centerline{Figure 6}

\bigskip
\noindent The equivalence of definitions for higher dimensional $sV$ follows from Section 4.1 \cite{Brin}.
If there is only one colour, then $V$ is exactly the Higman- Thompson  group as defined in \cite{Brown2}.
\end{remark}

\section{The poset of admissible subsets}

\noindent In this section we consider the Brin-Higman algebra on
$s$ colours with basis $\{ x \}$. We write $U$ for $U(\{ x \})$.

\begin{definition} \label{def00}  The set of admissible subsets is a poset
  with the order defined by $A<B$ if $B$ is a descendant of
  $A$. We denote this poset by $\frak A$ and by $|\frak A|$
  its geometric realization.
Note that any descendant and any ascendant of an
  admissible subset is also admissible.

Given admissible subsets $Y$ and $Z$ of $U$, we say that they have a unique least upper bound $T$ if $Y\leq T$  and $Z\leq  T$, and whenever $Y\leq S$ and $Z \leq S$, then $T\leq S$.
Analogously, we define the notion of greatest lower bound.
\end{definition}

\begin{lemma}\label{uniqmcd} Let $A,Y$ and $Z$ be admissible subsets with $A\leq Y$ and $A\leq Z$. Then there is a unique least upper bound of $Y$ and $Z$.
\end{lemma}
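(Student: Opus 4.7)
The plan is to construct the least upper bound explicitly as a \emph{common refinement} of $Y$ and $Z$, and then verify the lub property.

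First I would reduce to the case when $A$ is a singleton. Since $A\leq Y$ and $A\leq Z$, each $a\in A$ determines subsets $Y_a\subseteq Y$ and $Z_a\subseteq Z$ consisting of the elements of $Y$, respectively $Z$, that descend from $a$, and one has $Y=\bigsqcup_{a\in A}Y_a$ and $Z=\bigsqcup_{a\in A}Z_a$ with $\{a\}\leq Y_a$ and $\{a\}\leq Z_a$. If I produce a least upper bound $T_a$ of the pair $Y_a,Z_a$ inside the poset of descendants of $\{a\}$, then $T:=\bigsqcup_{a\in A}T_a$ is immediately seen to be a least upper bound of $Y$ and $Z$ in $\frak A$.

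Second, assuming $A=\{x\}$, I use the geometric interpretation from Remark \ref{remark00}: a descendant of $\{x\}$ corresponds to a partition of the $s$-cube associated with $x$ into dyadic rectangular sub-cubes obtained by repeated halving in coordinate directions. Set
$$T:=\{\, y\cap z \mid y\in Y,\ z\in Z,\ y\cap z\neq\emptyset \,\}.$$
Since the intersection of two dyadic intervals in one coordinate is either empty or again a dyadic interval, each non-empty $y\cap z$ is a dyadic sub-cube and $T$ is a partition of the cube associated to $x$.

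Third, the main technical step is to show $T$ is admissible; I would argue by induction on the total number of simple expansions needed to produce $Y$ and $Z$ from $\{x\}$. The crucial input is the commutation identity $\alpha_i^l\alpha_j^t=\alpha_j^t\alpha_i^l$ for $i\neq j$ built into the generalised Higman algebra, which allows rearranging halvings. Concretely: look at the first halving used in $Y$, say along colour $i$, splitting $x$ as $\{x\alpha_i^1,\ldots,x\alpha_i^{n_i}\}$. Either $Z$ already refines this halving at $x$, in which case I reduce to strictly smaller data on each $x\alpha_i^k$ separately; or the first halving of $Z$ is along some colour $j\neq i$, in which case I first expand $x$ along both $\alpha_i$ and $\alpha_j$ (using commutativity to see that the order does not matter) and then recurse on each of the resulting $n_i n_j$ cubes. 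This inductive construction yields an explicit sequence of simple expansions from $\{x\}$ to $T$.

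Fourth, once admissibility is established, $Y\leq T$ and $Z\leq T$ follow at once, because the restriction of the above sequence of expansions to each $y\in Y$ produces the dissection $\{y\cap z\mid z\in Z,\ y\cap z\neq\emptyset\}$, and symmetrically for $Z$. Finally, if $S$ is any admissible subset with $Y\leq S$ and $Z\leq S$, then each $s\in S$ is contained in a unique $y\in Y$ and a unique $z\in Z$, hence $s\subseteq y\cap z\in T$, so $T\leq S$. Uniqueness is automatic from the partial-order axioms. The main obstacle is the admissibility step: set-theoretically the common refinement is obvious, but verifying that it is reachable by a legal sequence of simple expansions requires the commutation identities together with the careful induction described above.
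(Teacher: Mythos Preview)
Your proof is correct and follows essentially the same approach as the paper: construct the least upper bound as the common refinement (superposition) of the two subdivisions via the geometric interpretation of Remark~\ref{remark00}. The paper's own argument is much terser and does not spell out the admissibility induction you describe; your version makes that step, and the verification that $T\leq S$, explicit.
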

\begin{proof} Consider the geometric representation of the
  set of descendants of $A$ as subdivisions of $s$-dimensional cubes (in fact $s$-dimensional parallelepipeds but we call them cubes for simplicity) labeled by the elements of $A$, see Remark \ref{remark00}. Then the result of performing both  sets  of subdivisions corresponding to $Y$ and $Z$ yields an upper bound $T$. Clearly, for any other upper bound $S$ of $Y$ and $Z$ we have $T\leq S$.
\end{proof}

\begin{lemma}\label{step1} Let $Y$, $Y_1$ and $Z$ be admissible subsets with
 $$Y\geq Y_1\leq Z.$$
 Then there is some admissible subset $Z_1$ with
 $$Y\leq Z_1\geq Z.$$
\end{lemma}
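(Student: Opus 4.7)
The plan is to observe that this statement is essentially an immediate corollary of Lemma \ref{uniqmcd}. The hypothesis $Y \geq Y_1 \leq Z$ unpacks to saying $Y_1 \leq Y$ and $Y_1 \leq Z$, which is precisely the hypothesis of Lemma \ref{uniqmcd} with the choice $A = Y_1$. That lemma then produces a (unique least) upper bound $T$ of $Y$ and $Z$, i.e., an admissible set $T$ with $Y \leq T$ and $Z \leq T$. Setting $Z_1 := T$ gives exactly the conclusion $Y \leq Z_1 \geq Z$ required.

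Geometrically, the picture behind this is very clean. Viewing the descendants of $Y_1$ as subdivisions of the $s$-cubes labelled by the elements of $Y_1$ (as in Remark \ref{remark00}), the admissible set $Y$ corresponds to one such subdivision and $Z$ to another. To construct $Z_1$, I would simply take the common refinement of the two subdivisions: over each cube of $Y_1$, perform all cuts appearing in $Y$ together with all cuts appearing in $Z$. The resulting subdivision is obtained from $Y_1$ by finitely many simple expansions, so it defines an admissible subset, and by construction every piece is contained in a piece of $Y$ and in a piece of $Z$, showing $Y \leq Z_1$ and $Z \leq Z_1$.

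Because the work has already been done in Lemma \ref{uniqmcd}, there is no serious obstacle here; the only point to check is that what one wants is a common upper bound (not a least one), so the uniqueness clause in Lemma \ref{uniqmcd} is not actually used—any upper bound suffices. Accordingly, the written proof would be a one-line appeal to Lemma \ref{uniqmcd} applied with $A = Y_1$, with at most a brief remark noting that the required $Z_1$ may be taken to be the unique least upper bound of $Y$ and $Z$ produced there.
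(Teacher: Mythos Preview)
Your proposal is correct and matches the paper's proof almost verbatim: the paper simply observes that $Y$ and $Z$ are both descendants of $Y_1$ and applies Lemma~\ref{uniqmcd} to obtain the required upper bound $Z_1$. Your additional remark that only the existence (not the uniqueness or minimality) of the upper bound is needed is accurate and harmless.
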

\begin{proof} Observe that $Y$ and $Z$ are both descendants of $Y_1$. Then by Lemma \ref{uniqmcd} there exists an upper bound $Z_1$ of $Y$ and $Z$. So we have $Y\leq Z_1\geq Z$.
\end{proof}

\begin{proposition}\label{comondes} Any two admissible subsets have some upper bound.
\end{proposition}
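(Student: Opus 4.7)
The plan is to produce a common lower bound of $Y$ and $Z$ in $\frak A$ and then apply Lemma \ref{uniqmcd} to extract a least upper bound. Since $U = U(\{x\})$ is generated as a $\frak V$-algebra by the single element $x$, the obvious candidate for a common lower bound is $A = \{x\}$, so the proposition reduces to the claim that every admissible subset of $U$ is a descendant of $\{x\}$.

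To establish this claim I would argue by induction on the length of a sequence of simple operations producing the given admissible subset from $\{x\}$, using the geometric picture of Remark \ref{remark00}. Simple expansions preserve the property of being a subdivision of the cube $\frak C$ by construction. Simple contractions, thanks to the identities $\alpha_i\lambda_i = 1$ and $\lambda_i\alpha_i = 1$ in $\Sigma_1$, merge $n_i$ $s$-subcubes coming from a single $\alpha_i$-halving back into their common parent cube. Hence at every step the running admissible subset stays a refinement of $\{\frak C\}$, which is exactly what it means to be a descendant of $\{x\}$.

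Once the claim is established, the proposition is immediate: apply Lemma \ref{uniqmcd} with $A = \{x\}$, $\{x\} \leq Y$ and $\{x\} \leq Z$ to obtain a (unique) least upper bound of $Y$ and $Z$, which is in particular the required upper bound.

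The main obstacle is the inductive step for contractions: one needs to know that the $n_i$ elements being contracted really are the $\alpha_i$-descendants of one parent cube rather than an unrelated collection of leaves, so that the merge genuinely corresponds to climbing up the subdivision tree. This is where the validity of $\Sigma$ (i.e. that $\theta_2$ is injective on admissible subsets, in the sense of Definition \ref{validid1}) is crucial, since it rules out unrelated elements accidentally sharing a $\lambda_i$-preimage. A slicker variant that bypasses the induction altogether is to argue directly with the geometry: the overlay of the two dyadic subdivisions of $\frak C$ corresponding to $Y$ and $Z$ is itself a dyadic subdivision of $\frak C$, and therefore an admissible subset which is a descendant of both $Y$ and $Z$.
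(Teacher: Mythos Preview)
Your central claim --- that every admissible subset of $U(\{x\})$ is a descendant of $\{x\}$ --- is false, and this is precisely the obstacle you flag but do not resolve. Here is a concrete counterexample in two colours. Starting from $\{x\}$, perform a simple expansion of colour $1$ to obtain $\{x\alpha_1^1, x\alpha_1^2\}$, and then perform a simple contraction of colour $2$ on these two elements. The result is the admissible singleton $\{y\}$ with $y=(x\alpha_1^1,x\alpha_1^2)\lambda_2$. Applying $\alpha_2$ to $y$ gives $(x\alpha_1^1,x\alpha_1^2)$, whereas $x\alpha_2=(x\alpha_2^1,x\alpha_2^2)$; since $x\alpha_1^1\neq x\alpha_2^1$ in $U(\{x\})$ (they correspond to different subcubes in the geometric picture), we have $y\neq x$. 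But any descendant of $\{x\}$ of cardinality $1$ must equal $\{x\}$, so $\{y\}$ is not a descendant of $\{x\}$. Validity does not save you: it only tells you that distinct admissible subsets of $U_1(\{x\})$ stay distinct in $U(\{x\})$, not that ``foreign'' contractions are forbidden. Your ``slicker'' overlay variant presupposes the same false claim, since it assumes $Y$ and $Z$ are dyadic subdivisions of the single cube $\frak C$.

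The paper's argument sidesteps this entirely. Because $Y$ and $Z$ are both obtained from $\{x\}$ by finitely many simple operations, there is a finite zig-zag chain
\[
Y\geq Y_1\leq Y_2\geq Y_3\leq\cdots\geq Y_r\leq Z
\]
of admissible subsets. One then applies Lemma~\ref{step1} to replace each valley $\ldots\geq Y_i\leq\ldots$ by a peak, shortening the chain step by step until a single common upper bound remains. No global lower bound is ever needed.
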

\begin{proof} Let $Y$ and $Z$ be two admissible subsets. By definition we can obtain $Z$ from $Y$ by a finite number of expansions or contractions. Therefore we may put
$$Y\geq Y_1\leq Y_2\geq Y_3\leq\ldots \geq Y_r\leq Z.$$
By Lemma \ref{step1} we get
$$Y\leq Z_1\geq Y_2\geq Y_3\leq\ldots$$
and we may
shorten the previous chain by omitting $Y_2$ to get a chain
$$Y\leq Z_1\geq Y_3\leq\ldots$$
Thus after finitely many steps we get
$$Y\leq T\geq Z \hbox{ or } Y\geq T\leq Z$$
for some $T$. In the second case we apply Lemma \ref{step1}.
\end{proof}

Proposition \ref{comondes} has the following consequence:
for any admissible subset $A$, any element $g\in G(sV)$ can
be represented by its action in the set of descendants of $A$,
i.e. there is some $A\leq Z$ with $A\leq Zg$. To see this,
choose $Z$ to be some upper bound of $A$ and
$Ag^{-1}$. Then $A\leq Z$ and $Ag^{-1}\leq Z$, so $A\leq Zg$.

\begin{lemma}\label{contractibility} $|\frak A|$ is contractible.
\end{lemma}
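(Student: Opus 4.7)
The plan is to exploit the fact, already established in Proposition~\ref{comondes}, that $\frak A$ is a directed poset (every pair of admissible subsets has an upper bound). It is a standard fact that the geometric realization of a directed poset is contractible, and I would essentially reproduce that argument in our setting.

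First, I would observe that $|\frak A|$ is nonempty, since $\{x\}\in\frak A$. To show weak contractibility, let $f\colon S^n\to|\frak A|$ be any continuous map. Since $S^n$ is compact, the image of $f$ meets only finitely many open simplices of $|\frak A|$, hence is contained in the realization of the full subcomplex spanned by a finite set of vertices $A_1,\dots,A_k\in\frak A$. By iterating Proposition~\ref{comondes}, one finds an admissible subset $B\in\frak A$ with $A_i\leq B$ for every $i=1,\dots,k$. Then $A_1,\dots,A_k,B$ form a finite subposet of $\frak A$ with unique maximum $B$, and its geometric realization is precisely the cone, with apex $B$, over the realization of $\{A_1,\dots,A_k\}$. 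In particular this realization is contractible, and $f$ factors through it, so $f$ is null-homotopic in $|\frak A|$.

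It follows that $\pi_n(|\frak A|)=0$ for every $n\geq 0$, i.e.\ $|\frak A|$ is weakly contractible. Since $|\frak A|$ has the homotopy type of a CW complex, Whitehead's theorem (applied to the map from a point) gives that it is contractible.

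There is no real obstacle: the only substantive ingredient is that any finitely many admissible subsets admit a common upper bound, which is exactly Proposition~\ref{comondes} (proved via the geometric cube-subdivision picture and Lemma~\ref{uniqmcd}). Everything else is a formal poset argument. One could also phrase the proof as a direct construction of a contracting homotopy by writing $|\frak A|$ as a filtered colimit of finite cones over arbitrarily large finite subsets, but the $\pi_n$-vanishing argument above is the shortest route.
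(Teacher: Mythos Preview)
Your proposal is correct and follows the same approach as the paper: the paper's own proof is the single sentence ``It is a consequence of Proposition~\ref{comondes} as the poset $\frak A$ is directed,'' and you have simply unpacked the standard fact that the order complex of a directed poset is contractible. The argument you give (compactness of $S^n$, passing to a finite subposet, coning over a common upper bound, then Whitehead) is exactly the routine justification of that fact, so there is no meaningful difference in strategy.
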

\begin{proof} It is a consequence of Proposition \ref{comondes} as the poset $\frak A$ is directed.\end{proof}

\begin{remark}\label{finitestab} Observe that as in the case of $V$ considered
  in \cite{Brown2}, the stabiliser of any admissible set $Y$
  in $sV$ is finite, as it consists precisely of the permutations of the elements of $Y$.
\end{remark}

\noindent We consider the filtration of $|\frak A|$ given by
$$\frak A_n:=\{Y\in\frak A\mid|Y|\leq n\}.$$

\begin{lemma}\label{finitemodsV} Each $|\frak A_n|/sV$ is finite.
\end{lemma}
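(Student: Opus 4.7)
The plan is to exhibit a finite set of simplex representatives modulo the $sV$-action. A $k$-simplex of $|\frak A_n|$ is a strictly increasing chain $Y_0 < Y_1 < \cdots < Y_k$ of admissible subsets with $|Y_k| \leq n$. Since each simple expansion has arity at least $2$, it strictly increases cardinality, so along such a chain $|Y_0| < |Y_1| < \cdots < |Y_k| \leq n$. In particular, the dimension of $|\frak A_n|$ is at most $n-1$, and it suffices to show that there are finitely many $sV$-orbits of $k$-simplices for each $k \leq n-1$.

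First I would observe that $sV$ acts transitively on the set of admissible subsets of any given cardinality $m$: this is immediate from the definition of $G(W) = sV$ as the group of algebra automorphisms induced by bijections between admissible sets of the same size, together with Lemma \ref{admfree}, which ensures that any such bijection actually extends to an algebra automorphism of $W$. Fix, for each $1 \leq m \leq n$, a representative $Y^{(m)}$ of the unique $sV$-orbit of admissible subsets of cardinality $m$. Given any chain $Y_0 < \cdots < Y_k$ in $\frak A_n$, choose $g \in sV$ with $Y_k g = Y^{(|Y_k|)}$; then the translated chain $Y_0 g < \cdots < Y_{k-1} g < Y^{(|Y_k|)}$ lies in the same $sV$-orbit, and each $Y_j g$ is an ascendant of $Y^{(|Y_k|)}$.

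The remaining step, and the only point that needs any argument, is that a fixed admissible subset $Y$ has only finitely many ascendants. This follows by a straightforward downward induction on $|Y|$: any ascendant $Y'$ of $Y$ is obtained by a finite sequence of simple contractions, and a simple contraction of $Y$ consists of choosing a colour $i$ and choosing an $n_i$-tuple of elements of $Y$ that together form the image of some $\alpha_i$ (equivalently, using the geometric interpretation of Remark \ref{remark00}, an $s$-cube whose halving along direction $i$ is contained in $Y$). There are only finitely many such choices, so $Y$ has only finitely many immediate ascendants, and iterating (the cardinality drops by one at each step, bounded below by $1$) shows the set of all ascendants of $Y$ is finite.

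Combining these, the number of $sV$-orbits of chains $Y_0 < \cdots < Y_k$ in $\frak A_n$ is bounded by $\sum_{m=1}^{n} N(m)^{k}$, where $N(m)$ is the finite number of ascendants of $Y^{(m)}$; in particular it is finite for each $k$. Since $|\frak A_n|$ has bounded dimension, the total number of $sV$-orbits of simplices is finite, and hence $|\frak A_n|/sV$ is a finite simplicial complex. I do not expect any genuine obstacle beyond correctly invoking transitivity and the finiteness of ascendants.
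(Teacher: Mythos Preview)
Your proof is correct and follows essentially the same approach as the paper. The paper's own proof is very brief: it only records the key observation that $sV$ acts transitively on admissible subsets of a given cardinality (exactly your first step), and leaves implicit the passage to higher simplices via the finiteness of ascendants and the bounded dimension of $|\frak A_n|$; you have simply made those routine details explicit.
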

\begin{proof} For any $Y$ and $Z\in\frak A_n$ with $|Y|=|Z|$ we
may consider the element $g\in sV$ given by
  $yg=y \sigma$, where $\sigma : Y \to Z$ is a fixed
  bijection. Thus $sV$ acts transitively on the
  admissible sets of the same size.
\end{proof}

Contrary to what happens with upper bounds, it is not true in general that any two admissible subsets have some lower bound. But the existence of greatest lower bounds in some particular cases  will be crucial in the subsequent sections. To overcome this problem, we assume that our contractions are descendants of the same $A$ and consider greatest lower bounds above $A$. For simplicity we use the following notation.

\begin{definition} Let $\Lambda$ be a finite set of admissible
  sets, $A_1$ and $A_2$ be admissible sets. We write
$$ A_1 \leq \Lambda \hbox{ if for every } B \in \Lambda \hbox{ we have } A_1 \leq B$$
and
$$ \Lambda \leq A_2 \hbox{ if for every } B \in \Lambda \hbox{ we have } B \leq A_2.$$
\end{definition}


\begin{definition}\label{maxbound} Let $A$ be an admissible set and $\Omega =\{Y_0,\ldots,Y_t\}$ be a finite set of admissible sets with $A\leq \Omega$.
 Assume there exists an admissible set $M$ such that $A\leq
 M\leq \Omega$ and for any other admissible set $B$ with $A\leq B\leq \Omega$,
we have $B\leq M$. Then we call $M$ a greatest lower bound of $\Omega$ above $A$ and denote
 $M=\text{glb}_A(\Omega)$.
\end{definition}

\begin{definition}
 Let $A\leq Y$ be admissible sets and let $r\geq 0$ be an integer. We say that $A$
 involves contractions of $r$ elements of $Y$, or involves $r$ elements of $Y$ for short,  if $|Y\setminus
 A|=r$; we also say that $Y\setminus  A$ are the elements of
 $Y$ contracted in $A$. Two contractions $A_1, A_2\leq Y$ are
 said to be disjoint if the respective sets of elements of
 $Y$ contracted in $A_1$ and $A_2$ are disjoint.
 \end{definition}

\noindent
In the particular case of disjoint contractions of a certain admissible $Y$ the existence of
greatest lower bounds follows easily:

\begin{lemma} \label{remark2} Let $\Omega=\{M_0,\ldots, M_t\}$ be a set of pairwise disjoint contractions of $Y$. Then
$$\emptyset\neq\bigcap_i\{L\mid L\leq M_i\}$$
has a maximal element $M$ which we call a global greatest
lower bound for $\Omega$ and denote by $gglb(\Omega)$. In particular for any $A\leq \Omega$, $M$ is a $\text{glb}_A(\Omega)$.
Moreover
$$|\hbox{elements of }Y \hbox{ involved in }M|=\sum_{0 \leq i \leq t}|\hbox{elements of }Y \hbox{ involved in }M_i|$$

\end{lemma}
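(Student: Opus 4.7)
The plan is to construct $M$ as the simultaneous application of all the disjoint contractions, using the geometric interpretation from Remark~\ref{remark00} in which admissible subsets correspond to partitions of the unit $s$-cube into $s$-subcubes. The key fact I will use repeatedly is that simple contractions acting on disjoint subsets commute.

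I would first define
$$M := \Bigl(Y \setminus \bigcup_{i=0}^t (Y \setminus M_i)\Bigr) \cup \bigcup_{i=0}^t (M_i \setminus Y).$$
Each $M_i$ is obtained from $Y$ by a finite sequence of simple contractions whose inputs lie in $Y\setminus M_i$; by pairwise disjointness of these input sets, contractions coming from different $M_i$ involve disjoint elements and hence commute, so all such sequences can be concatenated into a single valid contraction sequence from $Y$ whose end result is exactly $M$. Thus $M$ is admissible with $M \leq Y$. Moreover, starting from $M$ and re-expanding, for fixed $i$ and each $j\neq i$, every $a \in M_j \setminus Y \subseteq M$ back into its constituent subcubes from $Y\setminus M_j$ produces exactly $M_i$ (again these re-expansions involve disjoint pieces, so they commute). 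Hence $M \leq M_i$ for every $i$, and $\bigcap_i\{L\mid L\leq M_i\}$ is non-empty.

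The main step is maximality: if $L$ is admissible and $L\leq M_i$ for every $i$, then $L\leq M$. I would argue in the partition picture. Since $L\leq M_0\leq Y$ we have $L\leq Y$, so each $\ell\in L$ is a union of $Y$-subcubes. Take any $Y$-subcube $y \subseteq \ell$. If $y\notin\bigcup_i(Y\setminus M_i)$, then $y\in M$. Otherwise, by disjointness $y\in Y\setminus M_j$ for a unique $j$, and $y$ lies in a unique $a\in M_j\setminus Y\subseteq M$; the condition $L\leq M_j$ forces $\ell$ to be a union of $M_j$-subcubes, and since $a\cap\ell$ already contains the full-dimensional subcube $y$, we conclude $a\subseteq\ell$. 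Thus $\ell$ is a union of $M$-subcubes, giving $L\leq M$. Taking $L=A$ in the hypothesis $A\leq\Omega$ then yields $A\leq M$, so $M$ also serves as $\text{glb}_A(\Omega)$. The cardinality identity follows directly: $Y\setminus M=\bigcup_i(Y\setminus M_i)$ is a disjoint union by hypothesis, whence $|Y\setminus M|=\sum_i|Y\setminus M_i|$.

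The main obstacle is making ``pairwise disjoint simple contractions commute'' completely rigorous at the algebraic level, since the preceding arguments rely on this fact twice (to define $M$ and to recover each $M_i$ from $M$). I would handle this by induction on the combined number of simple contractions, swapping adjacent contractions acting on disjoint subsets at each step; such a swap is immediate from the definitions of the operations in $\Omega$ and the identities $\Sigma_1$, and shows that the outcome of applying all of the given contraction sequences is independent of the order, so that $M$ is well-defined.
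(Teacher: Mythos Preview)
Your proposal is correct and follows the same approach as the paper, whose entire proof reads ``We obtain $M$ by successively performing the contractions $M_i$.'' You have simply supplied the details the paper leaves implicit---the explicit formula for $M$, the commutation of disjoint simple contractions, the maximality argument via the partition picture, and the cardinality identity---and all of these are sound.
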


\begin{proof} We obtain $M$ by successively performing the contractions $M_i$.
\end{proof}

\begin{lemma}\label{propglb0} Let $A$ be an admissible set and $\Omega =\{Y_0,\ldots,Y_t\}$ be a finite set of  admissible sets such that $A\leq \Omega$. Then for an admissible subset $M$ we have $M=\text{glb}_A(\Omega)$ if and only if $A\leq M\leq \Omega$  and there is no expansion $N$ with $M<N$ and $N\leq \Omega$.
\end{lemma}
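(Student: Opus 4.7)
The plan is to prove both implications directly from Definition \ref{maxbound}, using Lemma \ref{uniqmcd} to supply a least upper bound in the non-trivial direction.

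For the forward implication, I would assume $M = \text{glb}_A(\Omega)$, so $A \leq M \leq \Omega$ holds by definition. Then I would argue by contradiction: if there existed an expansion $N$ with $M < N$ and $N \leq \Omega$, then in particular $A \leq N \leq \Omega$, and the defining property of $\text{glb}_A(\Omega)$ would force $N \leq M$, contradicting $M < N$.

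For the converse, I would assume $A \leq M \leq \Omega$ and that no expansion $N$ with $M < N$ satisfies $N \leq \Omega$. To verify that $M = \text{glb}_A(\Omega)$, I would take an arbitrary admissible $B$ with $A \leq B \leq \Omega$ and show $B \leq M$. Since both $B$ and $M$ are descendants of $A$, Lemma \ref{uniqmcd} produces a unique least upper bound $T$ of $\{B, M\}$. Now every $Y_i \in \Omega$ is an upper bound of both $B$ and $M$, so by minimality of $T$ one has $T \leq Y_i$ for each $i$; that is, $T \leq \Omega$. Since $M \leq T \leq \Omega$ and no proper expansion of $M$ lies below $\Omega$, I conclude $M = T$, whence $B \leq T = M$ as required.

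The main (and only mild) obstacle is establishing that the least upper bound $T$ given by Lemma \ref{uniqmcd} automatically lies beneath every $Y_i \in \Omega$; but this follows immediately from the minimality clause in Lemma \ref{uniqmcd} applied to the common upper bound $Y_i$ of $B$ and $M$. The rest is a direct unwinding of Definition \ref{maxbound}.
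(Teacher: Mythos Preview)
Your proof is correct and follows essentially the same approach as the paper's: both directions are handled identically, with the converse relying on Lemma~\ref{uniqmcd} to produce the least upper bound of $B$ and $M$, then using the maximality hypothesis to force equality. The only cosmetic difference is that you spell out explicitly why the least upper bound lies below each $Y_i$, which the paper leaves implicit.
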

\begin{proof} Assume first $M=\text{glb}_A(\Omega)$. If $M< N\leq\Omega$, then $A\leq N\leq\Omega$ and therefore $N\leq M$ which is a contradiction.

Conversely, we prove that if there is no $N$ as before, then
$M$ is a greatest lower bound above $A$. Assume there is some admissible set $B$ such that  $A\leq B\leq\Omega$. Recall that by Lemma \ref{uniqmcd} there exists a unique smallest upper bound $C$ of $B$ and $M$ above $A$. Then
$$A\leq \{B,M \}\leq C\leq\Omega.$$
If $M<C$ we have a contradiction and therefore $M=C$, and thus $B\leq M$.
\end{proof}

\begin{lemma}\label{existenceglb} Let $A$ be an admissible set and $\Omega =\{Y_0,\ldots,Y_t\}$
be a finite set of
 admissible sets such that $A\leq \Omega$. Then there exists $M=\text{glb}_A(\Omega)$.
\end{lemma}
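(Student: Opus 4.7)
The plan is to use the characterization of $\text{glb}_A(\Omega)$ given by Lemma \ref{propglb0}: it suffices to produce an admissible set $M$ with $A\leq M\leq \Omega$ admitting no strict expansion still contained below $\Omega$. I will obtain such an $M$ by a finiteness argument on the size of admissible sets.

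More precisely, consider the (non-empty) set
\[
\mathcal S=\{B \mid A\leq B\leq \Omega\},
\]
which contains $A$. The first step is to observe that every $B\in \mathcal S$ satisfies $B\leq Y_0$, so $Y_0$ is a descendant of $B$; since descendants are obtained by finitely many simple expansions and each simple expansion strictly increases the cardinality of the underlying admissible set (as argued in the proof that the Higman algebra is valid), one has $|B|\leq |Y_0|$. Hence $\{|B|\mid B\in\mathcal S\}$ is a bounded set of non-negative integers, and we may choose $M\in \mathcal S$ realising the maximum value of $|B|$.

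Next I verify the hypothesis of Lemma \ref{propglb0}. Suppose, for contradiction, that there is an admissible set $N$ with $M<N$ and $N\leq \Omega$. Then $A\leq M<N\leq \Omega$, so $N\in\mathcal S$; but since $N$ is obtained from $M$ by at least one simple expansion, we have $|N|>|M|$, contradicting the maximality of $|M|$. By Lemma \ref{propglb0}, $M=\text{glb}_A(\Omega)$.

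The argument is essentially a minimal-counterexample / maximum-principle argument and I do not expect any real obstacle: the only potentially subtle point is justifying that expansions strictly increase cardinality, which is immediate from the proof of validity of the Higman algebra established in Lemma~2.8. Uniqueness of $\text{glb}_A(\Omega)$ (which is not formally claimed here but is implicit in the notation) then follows from Lemma \ref{uniqmcd} applied to any two such maximal lower bounds above $A$.
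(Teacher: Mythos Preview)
Your proof is correct and follows essentially the same strategy as the paper: both consider the set $\{N\mid A\leq N\leq\Omega\}$, produce a maximal element, and invoke Lemma~\ref{propglb0}. The paper simply asserts this set is finite and picks an element maximal in the partial order, whereas you reach the same conclusion slightly more explicitly by bounding cardinalities and choosing an $M$ of maximum size.
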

\begin{proof} Observe that the following set is finite and non-empty
$$\frak S=\{N\text{ admissible}\mid A\leq N\leq\Omega\}.$$
This means that we may choose an element $M\in\frak S$
maximal with respect to the ordering. By Lemma \ref{propglb0}, $M=\text{glb}_A(\Omega)$.
\end{proof}

For later use, we record now the following obvious consequence of the definition of greatest lower bounds and Lemma \ref{propglb0}:

\begin{lemma}\label{propglb} Let $A$ be an admissible set and $\Omega =\{Y_0,\ldots,Y_t\}$ be
a finite set of  admissible sets such that $A\leq \Omega$. Consider $A\leq B$ and a subset $\Lambda\subseteq\Omega$ such that $B\leq\Lambda$. Then
$$\text{glb}_A\Omega\leq\text{glb}_A\Lambda=\text{glb}_B\Lambda.$$
\end{lemma}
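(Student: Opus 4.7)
My plan is to verify the two claims in the displayed line separately, and both follow by straightforward manipulation of the defining property of $\operatorname{glb}_{(-)}(-)$ together with Lemmas~\ref{existenceglb} and \ref{propglb0}. Throughout, existence of all the greatest lower bounds that appear is guaranteed by Lemma~\ref{existenceglb}.

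For the inequality $\operatorname{glb}_A\Omega\leq\operatorname{glb}_A\Lambda$, I would argue directly from the definition. By definition of $\operatorname{glb}_A\Omega$ we have $A\leq\operatorname{glb}_A\Omega\leq\Omega$; since $\Lambda\subseteq\Omega$, in particular $\operatorname{glb}_A\Omega\leq\Lambda$. Thus $\operatorname{glb}_A\Omega$ belongs to the set $\{N\text{ admissible}\mid A\leq N\leq\Lambda\}$ whose unique maximum (by Definition~\ref{maxbound} and Lemma~\ref{existenceglb}) is $\operatorname{glb}_A\Lambda$, yielding the desired inequality.

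For the equality $\operatorname{glb}_A\Lambda=\operatorname{glb}_B\Lambda$, I would show each side is $\leq$ the other. On the one hand, $\operatorname{glb}_B\Lambda$ satisfies $B\leq\operatorname{glb}_B\Lambda\leq\Lambda$, and since $A\leq B$ it also satisfies $A\leq\operatorname{glb}_B\Lambda\leq\Lambda$; hence $\operatorname{glb}_B\Lambda\leq\operatorname{glb}_A\Lambda$. On the other hand, $\operatorname{glb}_A\Lambda$ satisfies $A\leq\operatorname{glb}_A\Lambda\leq\Lambda$; since $B\leq\Lambda$ and $A\leq B$, the hypothesis together with the fact that $\operatorname{glb}_A\Lambda$ is the greatest such element above $A$ forces $B\leq\operatorname{glb}_A\Lambda$ (indeed $B$ itself lies in $\{N\mid A\leq N\leq\Lambda\}$, so $B\leq\operatorname{glb}_A\Lambda$). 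Then $\operatorname{glb}_A\Lambda$ sits in $\{N\mid B\leq N\leq\Lambda\}$, forcing $\operatorname{glb}_A\Lambda\leq\operatorname{glb}_B\Lambda$.

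There isn't really a hard step here, so I would keep the exposition short: the statement is essentially a monotonicity and base-point invariance property of greatest lower bounds, packaged exactly so it can be applied later in computing connectivity. The only thing worth checking carefully is that $B$ really does lie in $\{N\mid A\leq N\leq\Lambda\}$; this uses the assumption $A\leq B$ together with $B\leq\Lambda$, both of which are explicit hypotheses. No appeal to Lemma~\ref{propglb0} is strictly required, but one could alternatively phrase the argument in terms of maximal elements among expansions, as in the proof of Lemma~\ref{existenceglb}.
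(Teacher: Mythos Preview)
Your proof is correct and follows essentially the same approach as the paper, which simply records the lemma as ``an obvious consequence of the definition of greatest lower bounds and Lemma~\ref{propglb0}'' without spelling out the details. Your argument unpacks exactly the defining property of $\operatorname{glb}_A(-)$ from Definition~\ref{maxbound}, and the key observation that $B$ itself lies in $\{N\mid A\leq N\leq\Lambda\}$ is precisely what is needed; no further ingredient is required.
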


\section{Connectivity of $|K_Y|$ and proof of the main result for s=2}

Let $Y$ be any admissible subset of the Brin-Higman algebra on $s$ colours. We put

$$K_Y:=K_{<Y}=\{Z\mid Z \text{ is admissible with }Z< Y\}.$$


\noindent
Note that $K_Y$ is a poset. We also consider its geometric realisation which we denote $|K_Y|$.

 Our next objective will be to prove that in the case of two colours and $|Y|$ big enough, this complex
 $|K_Y|$ is $t$-connected.  To do this,
 we will argue as follows: firstly we will show that the
considered complex can be \lq\lq pushed down" in the sense
 that its $t$-connectedness is equivalent to the
 connectedness of a certain subcomplex $\Sigma_{4t}$
 defined in Section \ref{section4.1}.
 Then we will use an argument similar to Brown's argument in \cite{Brown2} to prove that $\Sigma_{4t}$ is $t$-connected for $|Y|$ big enough and to deduce, in the last subsection, that $2V$ is of type $\F_\infty$.

\noindent In the first subsection we shall begin with some general observations, valid for an arbitrary number $s$ of colours.

\subsection{Some general observations.}\label{section4.1}

\begin{definition} \label{def100} Denote by $C_r$ the following subposet of $K_Y$:
$$C_r:=\{A \in K_Y \mid A<Y \text{ and } A \hbox{ involves at most $r$ elements of }Y\},$$
and denote by $\Sigma_r$ the following subcomplex of $|K_Y|$:
$$\Sigma_{r}:=\{\sigma: A_t<A_{t-1}<\ldots<A_1<A_0 \mid
\sigma \in |K_Y|,  A_t\in C_r\}.$$
We denote by  $\Sigma^t_{r}$ the    $t$-skeleton of  $\Sigma_r.$
\end{definition}

To construct the pushing-procedure we will need to control the number of elements involved in the greatest lower bounds of certain sets of simple contractions of $Y$. To do that, we will use the notion of length which we define next.

\begin{definition}\label{length} Consider $A \in K_Y$. For any $i\in Y$, there  is a
  unique
$m\in A$ such that the $s$-cube labeled $m$ contains the $s$-cube labeled
$i$. Then $i$ is obtained by a certain number of successive subdivisions of $m$. We call that number the length of $i$ as descendant of $A$ and denote it by $l(A,i)$. We say
that two elements $i,j\in Y$ are glueable in $A$ if there
exists some simple contraction $Z<Y$ (of any color)
contracting exactly $i,j$ such that $A \leq Z$. Note that in that case  $l(A,i)=l(A,j)$.

We also say that $i\in Y$ is locally maximal with respect to $A$ if for any other $j\in Y$ obtained from the same $m\in A$ we have $l(A,i)\geq l(A,j)$. Clearly, in that case any other vertex which is glueable to $i$ in $A$ is also locally maximal.
\end{definition}

\noindent For example, consider the following admissible subset $A$ in the case of two colours and its descendant $Y$:

\bigskip
\begin{tikzpicture}[scale=1]

\draw[black] (1,0) --(3,0) --(3,2) -- (1,2) --(1,0);
\draw[black] (2,0) --(2,2) ;
\draw[black] (2,1) --(3,1);
\draw[black] (1.5, 1) node{$a$};
\draw[black] (2.5, 1.5) node{$b$};
\draw[black] (2.5, 0.5) node{$c$};

\draw[black] (0,1) node{$A:$};
\draw[black] (6,1) node{$Y:$};

\draw[black] (7,0) --(9,0) --(9,2) -- (7,2) --(7,0);
\draw[black] (8,0) --(8,2) ;
\draw[black] (8,1) --(9,1);
\draw[black] (7.5, 0) -- (7.5, 2);
\draw[black] (8.5, 0) -- (8.5, 1);
\draw[black] (8.5, 0.5) -- (9, 0.5);

\draw[black] (7.25, 1) node{$1$};
\draw[black] (7.75, 1) node{$2$};
\draw[black] (8.5, 1.5) node{$3$};
\draw[black] (8.25, 0.5) node{$4$};
\draw[black] (8.75, 0.75) node{$5$};
\draw[black] (8.75, 0.25) node{$6$};

\end{tikzpicture}
\bigskip

\centerline{Figure 7}

\bigskip
\noindent Here we have $l(A,5)=2$ and $6$ and $5$ are glueable. So are $1$ and $2$.
Moreover, all the elements except of 4 are locally maximal with respect to $A$.

\begin{lemma}\label{transmax} Let $A\leq B<Y$ be admissible subsets. If
  $i\in Y$ is locally maximal with respect to $A$ then it is
  also locally maximal with respect to $B$.
\end{lemma}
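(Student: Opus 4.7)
The plan is to track how the length function behaves when we replace $A$ by its descendant $B$. First I would fix $i\in Y$ and let $m\in A$ be the unique element whose $s$-cube contains the $s$-cube labelled $i$; let $m'\in B$ be the analogous element of $B$. Since $A\leq B$, the cube of $m'$ is a subcube of the cube of $m$, obtained by some sequence of simple expansions, and I would denote by $\ell$ the number of such expansions. The key point is the additivity of lengths: the subdivision chain from $m$ to $i$ passes through $m'$, so
$$l(A,i) \;=\; \ell + l(B,i),$$
where $\ell$ depends only on $(A,B,m')$ and not on $i$.

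Now let $j\in Y$ be any other element obtained from the same $m'\in B$ as $i$. Then $j$'s cube lies inside $m'$'s cube, which lies inside $m$'s cube, so $j$ is also obtained from the same $m\in A$ as $i$. Applying the same additivity to $j$ gives $l(A,j) = \ell + l(B,j)$ with the identical $\ell$. Local maximality of $i$ with respect to $A$ yields $l(A,i) \geq l(A,j)$, and subtracting $\ell$ from both sides gives $l(B,i) \geq l(B,j)$, which is exactly local maximality of $i$ with respect to $B$. The argument is essentially a bookkeeping identity about iterated subdivisions, so there is no real obstacle; the only thing to check carefully is that $j$ being obtained from $m'$ in $B$ implies $j$ is obtained from $m$ in $A$, which is immediate from the nesting of cubes.
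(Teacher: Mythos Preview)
Your proof is correct and follows essentially the same approach as the paper's: the key observation in both is that any $j\in Y$ obtained from $m'=m_B\in B$ is also obtained from $m=m_A\in A$. You spell out the additivity identity $l(A,\cdot)=\ell+l(B,\cdot)$ explicitly, whereas the paper leaves this implicit, but the argument is the same.
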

\begin{proof} Let $m_A\in A$, $m_B\in B$ be the elements in the respective set from which $i$ is obtained.  It suffices to note that any $j\in Y$ obtained from $m_B$  is also obtained from $m_A$.
\end{proof}

If $A\leq Y$ and we use the geometric description of $Y$ as
partitions of $s$-cubes, then the length of $i\in Y$ is related to the size of the subcube labeled $i$. If two vertices $i,j$ are glueable, then the cubes labeled $i$ and $j$ have exactly the same sizes and are neighbours. This implies that, for fixed $i$, there are at most $2s$ vertices which are glueable to $i$.
The next result implies that this bound in fact is $2(s-1)$.

\begin{lemma}\label{tec2a} Let $A\leq \{Y_0, Y_1 \} <Y$, where $Y_1$ and $Y_2$ are different, not
  disjoint, simple contractions of $Y$ of colours $a$ and $b$.  Label
  $\{1,2\}$ the vertices contracted in $Y_0$ and $\{2,3\}$
  those contracted in $Y_1$. Then the vertices labeled $1$ and $3$ are different  and
  $a\neq b$.
  \end{lemma}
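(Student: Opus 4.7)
The plan is to argue geometrically, using the cube interpretation of Remark \ref{remark00}. View $Y$ as a partition of the $s$-cube $\mathfrak{C}$ into dyadic sub-parallelepipeds, and let $P_i$ denote the cell labelled by $i \in Y$. The contraction $Y_0$, of colour $a$ on $\{1,2\}$, corresponds to the fact that $P_1$ and $P_2$ are the two halves in direction $a$ of a single dyadic sub-parallelepiped $Q_a := P_1 \cup P_2$. Similarly $Q_b := P_2 \cup P_3$ is a dyadic sub-parallelepiped halved in direction $b$. Both $Q_a$ and $Q_b$ are themselves admissible cells, sitting respectively in $Y_0$ and $Y_1$, so both are dyadic.

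First I would dispose of the case $a=b$. The key observation is that, for any fixed direction $a$, the dyadic intervals along that axis form a nested tree, and so there is exactly one dyadic sub-parallelepiped whose $a$-extent is twice that of $P_2$, whose extents in the other directions coincide with those of $P_2$, and which contains $P_2$. Both $Q_a$ and $Q_b$ meet these three requirements when $a=b$, forcing $Q_a=Q_b$. Then $P_1\cup P_2=P_2\cup P_3$; since distinct cells of the partition $Y$ are disjoint, this gives $P_1=P_3$ and hence $1=3$. But then $Y_0$ and $Y_1$ contract the same pair in the same colour, so $Y_0=Y_1$, contradicting the hypothesis.

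Next I would treat the case $1=3$. Now $Q_a = P_1\cup P_2 = P_3\cup P_2 = Q_b$; call this common parallelepiped $Q$. Halving $Q$ in direction $a$ produces two cells of $a$-extent equal to half that of $Q$ with all other extents unchanged, whereas halving in direction $b$ (with $a\neq b$) produces two cells whose $a$-extent is that of $Q$ itself. These two pairs of cells have different shapes, so the unordered pair produced by the $a$-halving cannot equal the one produced by the $b$-halving. Hence $\{P_1,P_2\}=\{P_2,P_3\}$ forces $a=b$, and once more $Y_0=Y_1$.

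Combining the two cases, each of $a=b$ and $1=3$ already leads to $Y_0=Y_1$, so under the hypothesis $Y_0\neq Y_1$ one must have both $1\neq 3$ and $a\neq b$. The subtle point, and the main obstacle, is to avoid the tempting but false claim that all halving-descendants of a common ancestor cell $m_A$ form a laminar family, since halvings of $m_A$ in two different directions can easily produce crossing sub-parallelepipeds. The correct laminarity needed is only the one-dimensional one along a fixed axis; it is this restricted, unproblematic form of nestedness that underlies the uniqueness argument for the direction-$a$ ``parent'' of $P_2$ used above.
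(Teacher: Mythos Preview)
Your argument is correct and follows essentially the same geometric idea as the paper: both proofs exploit that, along a fixed direction, a dyadic cell has a \emph{unique} sibling with which it can be merged. The paper phrases this as ``if one side of an $s$-cube can be deleted in a contraction, then the opposite side cannot,'' whereas you make the uniqueness of the direction-$a$ dyadic parent of $P_2$ explicit via the nested-tree structure of dyadic intervals; your added case analysis ($a=b$ separately from $1=3$) and your closing remark on why only one-dimensional laminarity is needed make the reasoning more transparent, but the underlying mechanism is the same.
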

\begin{proof} We use the geometric realisation of $sV$. Assume that $a=b$. As $Y_0\neq Y_1$ this would mean that the $s$-cubes labelled $1$ and $3$ are situated at opposite sides of the $s$-cube labeled 
$2$. This, however, is impossible since $\alpha^1_a$ and $\alpha_a^2$ do not commute.  In particular,  if one side of an $s$-cube can be
deleted in a contraction,
then the opposite side can not be deleted.  Therefore $a\neq b$ and
the $s$-cubes labeled $1$  and $3$
are on the sides of the $s$-cube labeled $2$ corresponding to different
directions.
In particular the $s$-cubes  labeled $1$ and $3$ are different.
 \end{proof}

 In the following definition we consider a special graph $\Gamma_A$ that will be quite useful in the next subsections.

 \begin{definition}\label{defgamma} Let $A\leq Y$ be a contraction and
  consider the coloured graph $\Gamma_A$ whose vertices are the vertices of $Y$, and with an edge of colour $a$ between vertices $i,j$ if there is a simple contraction $Z$ with $A\leq Z<Y$ which contracts $i,j$ with colour $a$.
Note that whenever $A\leq B\leq Y$ then $\Gamma_B\subseteq\Gamma_A$ and the graph $\Gamma_Y$ consists of the vertices of $Y$ with no edges.
Also, any family of simple contractions
$\Omega=\{Y_0,\ldots,Y_t\}$ of $Y$ such that $A \leq \Omega$
yields a subgraph of $\Gamma_A$
formed by the edges associated to the $Y_i$'s. We say that the family is connected if this subgraph is connected. Observe that if $\Omega$ is connected, then all the contractions $Y_i\in\Omega$ have the same length in $A$. In particular,  if the vertices involved in $Y_i$  are locally maximal with respect to $A$ then so are the vertices involved in any other $Y_j$.


\end{definition}

\subsection{Construction of the
  Pushing-procedure.} \label{section4.2}

  \noindent
From now on, we assume we have only two colours. Also  recall that both are  of arity 2.
In this subsection we  prove the following result:

\begin{theorem}\label{pushing} There exists an order reversing poset map
$$M:\{\text{Poset of simplices of }|K_Y|\}\to K_Y$$
such that for any $t$-simplex $\sigma:A_t<A_{t-1}<\ldots<A_0$ we have
$$A_t\leq M(\sigma)\in C_{4t}.$$
\end{theorem}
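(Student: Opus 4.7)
The plan is to build $M$ by a \emph{pushing procedure} that sends a chain $\sigma:A_t<A_{t-1}<\ldots<A_0$ to a canonical descendant of $A_t$ in $K_Y$, using the whole chain to break the ambiguity in how to refine.  The two requirements pull in opposite directions: $A_t\leq M(\sigma)$ forces $M(\sigma)$ to sit close to $A_t$, while $M(\sigma)\in C_{4t}$ forces $|Y\setminus M(\sigma)|\leq 4t$, i.e.\ $M(\sigma)$ to sit close to $Y$.  The chain length $t$ is precisely the budget that reconciles the two.

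The hypothesis $s=2$ enters only through Lemma \ref{tec2a}: it implies that each vertex of the glueability graph $\Gamma_A$ of Definition \ref{defgamma} has at most one neighbour of each colour, so $\Gamma_A$ is a disjoint union of colour-alternating paths and even cycles.  This bounded-degree rigidity turns the choice of refinement of $A$ inside $K_Y$ into a combinatorial problem on a graph of maximum degree two, and is what should deliver the ``four new elements per dimension'' cost below.

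I would define $M$ by induction on the simplex dimension.  For a $0$-simplex $\{A_0\}$, send it to a distinguished maximal proper descendant of $A_0$ inside $K_Y$, obtained after fixing once and for all a total order on the vertices of $Y$ and contracting in each connected component of $\Gamma_{A_0}$ all but one canonically chosen edge (this keeps the image strictly below $Y$).  For a $t$-simplex $\sigma$ with codimension-one faces $\sigma^{(i)}$ ($i=0,\ldots,t$) obtained by deleting $A_i$, define
\[
M(\sigma):=\text{glb}_{A_t}\bigl\{M(\sigma^{(i)})\mid i=0,\ldots,t\bigr\},
\]
whose existence is given by Lemma \ref{existenceglb} once one checks $A_t\leq M(\sigma^{(i)})$ for every $i$ (immediate for $i<t$, and for $i=t$ it follows from $A_t\leq A_{t-1}\leq M(\sigma^{(t)})$).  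By construction $A_t\leq M(\sigma)\leq M(\sigma^{(i)})$ for every $i$, which is the order reversal condition on codimension-one faces; composing these inequalities along any chain of codimension-one inclusions, with Lemma \ref{propglb} in the background, propagates order reversal to all faces.

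The main obstacle is the quantitative bound $|Y\setminus M(\sigma)|\leq 4t$.  An iterated glb could in principle refine far too aggressively, so the design of $M$ at low dimensions has to be made with foresight: the new contractions that appear when moving from any face $\sigma^{(i)}$ to $\sigma$ must together involve at most four additional elements of $Y$.  The path-and-cycle structure of $\Gamma_{A_t}$ is exactly what enforces this locality, since extending a canonical matching inside a single connected component by one additional edge of each of the two colours increases $|Y\setminus(\cdot)|$ by at most four, matching the step cost the chain is allowed to pay.  The technical core of the theorem is checking that this per-dimension bound actually survives the glb construction, i.e.\ that the four new contractions added when incorporating an extra $A_i$ are genuinely the \emph{same} four as seen from every relevant face; this is the delicate bookkeeping that the pigeonhole-style argument of Lemma \ref{tec2a} is designed to feed.
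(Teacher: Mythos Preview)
Your proposal has a genuine gap: it misses the notion of \emph{local maximality} (Definition~\ref{length}), which is the mechanism that actually produces the bound $M(\sigma)\in C_{4t}$.

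The paper's construction is not recursive over faces.  It first assigns to every vertex $A\in K_Y$ a \emph{single} simple contraction $M(A)$ of $Y$ whose two contracted elements are locally maximal with respect to $A$, and then sets
\[
M(\sigma)=\text{glb}_{A_t}\bigl(M(A_0),\ldots,M(A_t)\bigr)
\]
for a $t$-simplex $\sigma:A_t<\ldots<A_0$.  The quantitative bound comes from Proposition~\ref{general}, which in turn rests on Lemma~\ref{tec2}: if two non-disjoint simple contractions have locally maximal vertices with respect to some $B$ between $A$ and them, then the connected component of $\Gamma_A$ containing them is exactly a $4$-cycle and their glb is in $C_4$.  Local maximality is used in an essential way in the proof of Lemma~\ref{tec2} (to rule out the fourth rectangle being further subdivided); drop it and the connected component can be an arbitrarily long alternating path, so the glb can lie in $C_r$ for $r$ unbounded.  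Lemma~\ref{tec2a}, which you invoke, only tells you that $\Gamma_A$ has maximum degree~$2$; it gives no bound on component size and cannot by itself deliver the per-step cost of four.

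Two further concrete problems with your scheme.  First, your $0$-simplex definition is incoherent: a maximal element of $K_Y$ above $A_0$ is a single simple contraction of $Y$, yet ``contracting in each connected component of $\Gamma_{A_0}$ all but one edge'' describes something involving all components at once, which has no uniform bound on $|Y\setminus M(\{A_0\})|$.  Second, even granting $M(\sigma^{(i)})\in C_{4(t-1)}$ for every codimension-one face, the glb over $A_t$ of these $t+1$ sets has no reason to lie in $C_{4t}$: glb's of contractions in $C_r$ can land in $C_{r'}$ for $r'$ much larger than $r$.  The ``delicate bookkeeping'' you allude to is not bookkeeping at all---it requires a structural input (local maximality along the chain $A_t\leq\ldots\leq A_0$) that your framework does not provide.
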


In the next lemma we describe certain connected components of the graph $\Gamma_A$.
Recall that for $M \in K_Y$ the vertices involved in contraction in $M$, or just involved in $M$ for short are the elements of $Y \setminus M$.

\begin{lemma}\label{tec2} Let $A\leq \{Y_0, Y_1 \} <Y$, where $Y_0$ and $Y_1$ are different, not
  disjoint, simple contractions of $Y$ such that the vertices involved in them are locally maximal with respect to some $B$ with $A\leq B\leq \{Y_0,Y_1\}$.
  Then the connected component of $\Gamma_A$ containing them is a square and for $M=\text{glb}_A(\{Y_0,Y_1 \})$, the vertices involved in $M$ are precisely those in the square. In particular, $M\in C_4$.
  \end{lemma}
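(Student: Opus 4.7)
The plan is to analyze the situation geometrically via Remark \ref{remark00}, exploiting Lemma \ref{tec2a} and the local maximality hypothesis on $B$. By Lemma \ref{tec2a} we may label the vertices so that $Y_0$ is the contraction of $\{1,2\}$ in colour $a$ and $Y_1$ is the contraction of $\{2,3\}$ in colour $b$, with $a\neq b$ and $1,2,3$ pairwise distinct. In the geometric picture, cubes $1,2,3$ are three equal-size $s$-cubes of $Y$ occupying three corners of a larger $s$-cube $S$, with cube $1$ next to cube $2$ along direction $a$ and cube $3$ next to cube $2$ along direction $b$; since $\alpha_a$ and $\alpha_b$ commute, the fourth corner of $S$ is a well-defined geometric region $P$ of the same size as cubes $1,2,3$, and $S$ is the smallest valid cube containing $1,2,3$.

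The first step is to show that $P$ is occupied by a single cube of $Y$, to be called cube $4$. Let $b_0$ be the unique cube of $B$ containing cube $2$. From $B\leq Y_0$ we get $[12]\subseteq b_0$, and from $B\leq Y_1$ we get $[23]\subseteq b_0$; hence $b_0$ contains all of cubes $1,2,3$, which forces $b_0\supseteq S$. The local maximality of cube $1$ with respect to $B$ then says that every cube of $Y$ descending from $b_0$ has length at most $l(B,1)$; a cube of $Y$ strictly inside $P$ would exceed this bound, whereas a cube of $Y$ properly containing $P$ would overlap with one of cubes $1,2,3$, which is impossible. Hence $P$ is a single cube of $Y$, which we label cube $4$.

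Next I identify $M=\text{glb}_A(\{Y_0,Y_1\})$ with the admissible set $M'$ obtained from $Y$ by merging $\{1,2,3,4\}$ into the single cube $S$. Then $M'\leq Y_0$ via the expansions of $S$ first along $b$ (producing $[12]$ and $[34]$) and then of $[34]$ along $a$; symmetrically $M'\leq Y_1$. Moreover $A\leq M'$, since $M'$ agrees with $Y$ outside $S$ and $A$'s cube covering $S$ is, by the same argument as for $b_0$, at least as large as $S$. Maximality of $M'$ follows from Lemma \ref{propglb0}: the only simple expansion of $M'$ splits $S$ along $a$ or $b$, producing pairs such as $[14],[23]$ or $[12],[34]$; in either case at least one produced cube is not an ancestor of the cubes of $Y_0$ or $Y_1$ in the $S$-region (for instance $[12]\in Y_0$ straddles $[14]$ and $[23]$), so no such expansion lies below both $Y_0$ and $Y_1$. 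Therefore $M=M'$ involves exactly the four vertices $1,2,3,4$.

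It remains to determine the connected component of $\Gamma_A$ containing $\{1,2\}$ and $\{2,3\}$. The edges $\{1,4\}$ of colour $b$ and $\{3,4\}$ of colour $a$ belong to $\Gamma_A$ because the corresponding simple contractions of $Y$ produce admissible sets dominated by $A$, by the same $S$-containment argument. To rule out further edges, I apply Lemma \ref{tec2a} in contrapositive form: each vertex admits at most one $A$-dominated contraction partner per colour, hence at most two partners in total. Since each of cubes $1,2,3,4$ already has its two partners inside $\{1,2,3,4\}$, no further incident edge exists, and the component is the $4$-cycle on $\{1,2,3,4\}$. Its vertex set coincides with the vertices involved in $M$, so $M\in C_4$. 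The most delicate point is the identification of $P$ as a single cube of $Y$, which is the only place where the local maximality hypothesis on $B$ is essential.
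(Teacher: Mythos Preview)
Your proof is correct and follows essentially the same geometric approach as the paper: both arguments use Lemma \ref{tec2a} to force $a\neq b$, identify the $2\times2$ block $S$ containing $1,2,3$, and invoke local maximality to show the fourth corner is a single element $4\in Y$. The only organisational difference is that the paper first notes $B\leq M$ and then analyses the subdivision tree of the unique $m\in M$ covering $1,2,3$ (arguing that the first two halvings must be of different colours and that local maximality prevents further subdivision of the fourth quarter), whereas you build the candidate $M'=(Y\setminus\{1,2,3,4\})\cup\{S\}$ directly and certify it as the greatest lower bound via Lemma \ref{propglb0}; your final degree-bound argument from Lemma \ref{tec2a} to close off the component is exactly the mechanism implicit in the paper's discussion preceding that lemma.
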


\begin{proof} Label with
  $\{1,2\}$ the vertices involved in $Y_0$ and with $\{2,3\}$
  those involved  in $Y_1$.  Note that $B\leq M\leq Y_0,Y_1$ so the vertices 1,2,3 are also locally maximal respect to $M$.
 Moreover 1,2,3 are obtained from the same element $m\in M$. We shall show that the only
possibility occurring  is the picture of Figure 4, where  $m$ is
the square subdivided into 4 small squares.

Consider one of the possible chains of subdivisions of $m$
yielding
1,2,3 and let $\alpha_b$ be the first subdivision of the
chain. If $1,2,3$
were all in the same half, i.e., all descendants of the same
$m\alpha_b^r$
for a fixed $r\in\{1,2\}$ then a geometric argument proves that also
$M_1=\{m\alpha_b^1,m\alpha_b^2\} \cup (M \setminus m) \leq Y_1,Y_2$, which is impossible by the definition of greatest lower bounds.  Hence we may assume that 1,2 are partitions of
$m\alpha_b^1$ and 3 is a partition of $m\alpha_b^2$.
Moreover, by the commutativity relations, there are no more subdivisions corresponding to
colour $b$ in the path of subdivisions needed to obtain 1,2,3 from $m$. The
fact that $M\leq Y_1$ implies that the first subdivision
$\alpha_b$ can be inverted, i.e., it must be possible to
perform the successive subdivisions in such a way that the
second step consists of subdividing in direction $a$ both
halves $m\alpha_b^1$ and $m\alpha_b^2$. But again the
commutativity relations imply that we may assume that this
second subdivision using colour $a$ (i.e. subdivision
 in direction $a$)
yields precisely the line
between the rectangles 1 and 2, and that the rectangles
$1,2,3$ correspond precisely to three of the rectangles
$m\alpha_b^i\alpha_a^j$ for $i,j=1,2$.
It would be possible that the fourth rectangle were also
subdivided, but the
hypothesis that the length $l(M,1)$ is maximal implies that
this is not
the case. So the fourth is also a rectangle of the same size
which
we label 4 and therefore the rooted tree yielding $1,2,3$ from $m$ is any of the trees of Figure 3. Clearly, the associated graph in $\Gamma_A$ is a square.
\end{proof}
Observe that the previous Lemma implies that  for the contractions $Z_0$ of $\{3,4\}$ of colour $a$ and
  $Z_1$ of $\{1,4\}$ of colour $b$ we also have $A\leq M\leq
  \{Z_0,Z_1 \}$. Moreover $M=\text{glb}_A(Y_0,Y_1,Z_0)=\text{glb}_A(Y_0,Y_1,Z_0,Z_1)$.

\begin{example}\label{openbook}  If we have more than 2 colours, the obvious
corresponding version of Lemma \ref{tec2}, that two non-disjoint simple
contractions are contained in a square in $\Gamma_A$, will be false.
 Consider the following example: Suppose we have 3 colours $a,b,c$, let $A=\{m\}$ and $Y=\{1,2,3,4,5,6,7\}$ with
$$\quad 1=m\alpha_b^2\alpha_a^2\alpha_c^1,\quad 2=m\alpha_b^1\alpha_a^2\alpha_c^1,\quad 3=m\alpha_b^1\alpha_a^1\alpha_c^1,\quad 4=m\alpha_b^1\alpha_a^1\alpha_c^2,$$
$$5=m\alpha_b^1\alpha_a^2\alpha_c^2,\quad 6=m\alpha_b^2\alpha_a^2\alpha_c^2,\quad 7=m\alpha_b^2\alpha_a^1$$

\noindent Consider the following tree-diagram, where dotted lines represent halving in direction $a$, dashed lines halving in direction $b$ and normal lines halving in direction $c$.

\medskip

\begin{tikzpicture}[scale=0.9]

  \draw[black] (0,0) -- (0.5,1) --(1,0);
  \draw[black] (1.5,0) -- (2,1) --(2.5,0);
   \draw[black,] (4.5,0) -- (5,1) --(5.5,0);

  \draw[black,dotted] (0.5,1) -- (1.25,2) --(2,1);
   \draw[black,dotted] (3.5,1) -- (4.25,2) --(5,1);

  \draw[black,dashed] (1.25,2) --(2.75,3) --(4.25,2);

 \filldraw (0,0) circle (0.3pt) node[below=4pt]{$3$};
  \filldraw (1,0) circle (0.3pt) node[below=4pt]{$4$};
   \filldraw (1.5,0) circle (0.3pt) node[below=4pt]{$2$};
    \filldraw (2.5,0) circle (0.3pt) node[below=4pt]{$5$};
     \filldraw (4.5,0) circle (0.3pt) node[below=4pt]{$1$};
      \filldraw (5.5,0) circle (0.3pt) node[below=4pt]{$6$};
       \filldraw (3.5,1) circle (0.3pt) node[below=4pt]{$7$};

\end{tikzpicture}

\medskip
\centerline{Figure 8}
\medskip

 If we wanted all nodes of the same length, we would only have to
subdivide 7 further, for example into $m\alpha_b^2\alpha_a^1\alpha_a^1$ and
$m\alpha_b^2\alpha_a^1\alpha_a^2$.
Let $Y_0$ be the simple contraction of $Y$ of colour $b$  involving
$\{1,2 \}$ and $Y_1$ the simple contraction of $Y$ of colour $a$ involving  $\{2,3\}$.
Note that $A\leq Y_0,Y_1$ and any contraction of both $Y_0$ and $Y_1$ has to involve
contraction of either 7 elements in the first case or 8
elements in the second. 
One easily checks that (in both cases) there is no square in $\Gamma_A$ containing $Y_0$ and $Y_1$. The maximal connected component of the graph $\Gamma_A$ (in both cases)  is what will be called an open book in section 5, where we consider the case of three colours in detail. We may also represent the elements of $Y$ as subdivisions of a cube labelled $m$, the following picture ilustrates the case when $Y$ has 7 elements.

\setlength{\unitlength}{0.7cm}
\begin{picture}(6,6)(-0.1,-0.1)

\put(5,0){\line(0,1){2}}
\put(7,0){\line(0,1){2}}
\put(5,2){\line(1,0){2}}
\put(5,0){\line(1,0){2}}
\put(5,0){\line(2,1){1}}
\put(7,0){\line(2,1){1}}
\put(5,2){\line(2,1){1}}
\put(7,2){\line(2,1){1}}
\put(6,0.5){\line(0,1){2}}
\put(8,0.5){\line(0,1){2}}
\put(6,2.5){\line(1,0){2}}
\put(6,0.5){\line(1,0){2}}

\put(5,2){\line(0,1){2}}
\put(7,2){\line(0,1){2}}
\put(5,4){\line(1,0){2}}
\put(5,2){\line(1,0){2}}
\put(5,2){\line(2,1){1}}
\put(7,2){\line(2,1){1}}
\put(5,4){\line(2,1){1}}
\put(7,4){\line(2,1){1}}
\put(6,2.5){\line(0,1){2}}
\put(8,2.5){\line(0,1){2}}
\put(6,4.5){\line(1,0){2}}
\put(6,2.5){\line(1,0){2}}

\put(7,2){\line(0,1){2}}
\put(9,2){\line(0,1){2}}
\put(7,4){\line(1,0){2}}
\put(7,2){\line(1,0){2}}
\put(7,2){\line(2,1){1}}
\put(9,2){\line(2,1){1}}
\put(7,4){\line(2,1){1}}
\put(9,4){\line(2,1){1}}
\put(8,2.5){\line(0,1){2}}
\put(8,2.5){\line(1,0){2}}

\put(7,0){\line(0,1){2}}
\put(9,0){\line(0,1){2}}
\put(7,2){\line(1,0){2}}
\put(7,0){\line(1,0){2}}
\put(7,0){\line(2,1){1}}
\put(9,0){\line(2,1){1}}
\put(7,2){\line(2,1){1}}
\put(9,2){\line(2,1){1}}
\put(8,0.5){\line(0,1){2}}
\put(9,0.5){\line(0,1){2}}
\put(8,2.5){\line(1,0){2}}
\put(8,0.5){\line(1,0){2}}

\put(6,0.5){\line(0,1){2}}
\put(8,0.5){\line(0,1){2}}
\put(6,2.5){\line(1,0){2}}
\put(6,0.5){\line(1,0){2}}
\put(6,0.5){\line(2,1){1}}
\put(8,0.5){\line(2,1){1}}
\put(6,2.5){\line(2,1){1}}
\put(8,2.5){\line(2,1){1}}
\put(7,1){\line(0,1){2}}
\put(7,3){\line(1,0){2}}
\put(7,1){\line(1,0){2}}

\put(6,2.5){\line(0,1){2}}
\put(8,2.5){\line(0,1){2}}
\put(6,4.5){\line(1,0){2}}
\put(6,2.5){\line(1,0){2}}
\put(6,2.5){\line(2,1){1}}
\put(8,2.5){\line(2,1){1}}
\put(6,4.5){\line(2,1){1}}
\put(8,4.5){\line(2,1){1}}
\put(7,3){\line(0,1){2}}
\put(9,3){\line(0,1){2}}
\put(7,5){\line(1,0){2}}
\put(7,3){\line(1,0){2}}

\put(8,2.5){\line(0,1){2}}
\put(8,2.5){\line(1,0){2}}
\put(8,2.5){\line(2,1){1}}
\put(10,2.5){\line(2,1){1}}
\put(8,4.5){\line(2,1){1}}
\put(10,4.5){\line(2,1){1}}
\put(9,3){\line(0,1){2}}
\put(11,3){\line(0,1){2}}
\put(9,5){\line(1,0){2}}
\put(9,3){\line(1,0){2}}

\put(8,0.5){\line(0,1){2}}
\put(10,0.5){\line(0,1){2}}
\put(8,2.5){\line(1,0){2}}
\put(8,0.5){\line(1,0){2}}
\put(8,0.5){\line(2,1){1}}
\put(10,0.5){\line(2,1){1}}
\put(8,2.5){\line(2,1){1}}
\put(10,2.5){\line(2,1){1}}
\put(9,1){\line(0,1){2}}
\put(11,1){\line(0,1){2}}
\put(9,3){\line(1,0){2}}
\put(9,1){\line(1,0){2}}

\put(5.4,3.1){3}
\put(5.4,1.1){2}
\put(7.4,1.1){1}
\put(9.4,3.2){7}

\put(6.4,3.6){4}
\put(6.4,1.6){5}
\put(8.4,1.6){6}

\end{picture}
\smallskip

\centerline{Figure 9}

\goodbreak

\noindent Moreover, if we enlarge in a suitable
way we can easily build  examples in which the common
contraction of $Y_0,Y_1$ has to involve arbitrarily many elements of $Y$.
For example, by looking at the associated tree-diagram, we could insert another subdivision in direction
$c$ as in the figure below to obtain a $Y'$ with $13$ vertices.
As before let $Y_0$ and $Y_1$  simple contractions involving $\{1,2\}$ (with colour $b$) and $\{2,3\}$ (with colour $a$) respectively. Here, any contraction of both, $Y_0$ and $Y_1,$ would involve $13$ elements.

\bigskip

\begin{tikzpicture}[scale=0.9]

  \draw[black] (0,0) -- (0.5,1) --(1,0);
  \draw[black] (1.5,0) -- (2,1) --(2.5,0);
 \draw[black] (3,0) -- (3.5,1) --(4,0);
   \draw[black] (4.5,0) -- (5,1) --(5.5,0);
   \draw[black] (9,0) -- (9.5,1)  --(10,0);
   \draw[black] (10.5,0) -- (11,1)  --(11.5,0);

  \draw[black] (0.5,1) -- (1.25,2) --(2,1);
   \draw[black] (3.5,1) -- (4.25,2) --(5,1);
   \draw[black] (9.5,1) -- (10.25,2) --(11,1);

  \draw[black,dotted] (1.25,2) --(2.75,3) --(4.25,2);
 \draw[black,dotted] (7.25,2) --(8.75,3) --(10.25,2);

\draw[black,dashed] (2.75,3)--(5.75,4) --(8.75,3);

 \filldraw (0,0) circle (0.3pt) node[below=4pt]{$3$};
  \filldraw (1,0) circle (0.3pt) node[below=4pt]{$3'$};
   \filldraw (1.5,0) circle (0.3pt) node[below=4pt]{$4$};
    \filldraw (2.5,0) circle (0.3pt) node[below=4pt]{$4'$};
     \filldraw (3,0) circle (0.3pt) node[below=4pt]{$2$};
           \filldraw (4,0) circle (0.3pt) node[below=4pt]{$2'$};
              \filldraw (4.5,0) circle (0.3pt) node[below=4pt]{$5$};
               \filldraw (5.5,0) circle (0.3pt) node[below=4pt]{$5'$};
  \filldraw (7.25,2) circle (0.3pt) node[below=4pt]{$7$};

     \filldraw (9,0) circle (0.3pt) node[below=4pt]{$1$};
      \filldraw (10,0) circle (0.3pt) node[below=4pt]{$1'$};
                   \filldraw (10.5,0) circle (0.3pt) node[below=4pt]{$6$};
                     \filldraw (11.5,0) circle (0.3pt) node[below=4pt]{$6'$};

\end{tikzpicture}

\bigskip
\centerline{Figure 10}

\bigskip
The effect of this in the representation of Figure 9 would be to halve each of the cubes 1, 2, 3, 4, 5 and 6 (with a plane parallel to the plane between 3 and 4) to yield the new cubes 1, 1', etc.

\end{example}

\begin{proposition}\label{general} Let
  $A \leq \Omega=\{Y_0,\ldots,Y_t\}$ where $t \geq 1$ and $Y_i$ are simple
contractions of $Y$. Assume further that there are  admissible sets
$A\leq A_t\leq A_{t-1}\leq\ldots\leq A_0$ such that
for each $i$ we have $A_i\leq Y_i$ and the elements involved in $Y_i$ are locally maximal with respect to $A_i$.
Then for $M=\text{glb}_A(\Omega)$,

 $$M\in C_{4t}.$$

\end{proposition}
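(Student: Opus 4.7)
My plan is to prove the proposition by induction on $t \geq 1$, with Lemmas~\ref{tec2} and~\ref{remark2} providing the main local control on how many new $Y$-cells become involved when one simple contraction is added to a previously computed greatest lower bound.

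For the base case $t=1$, $\Omega=\{Y_0,Y_1\}$ and the chain is $A\leq A_1\leq A_0$. If the two contractions have no common $Y$-vertex, Lemma~\ref{remark2} gives $|Y\setminus M|=4$ directly. If they share a vertex, I apply Lemma~\ref{tec2} with $B:=A_1$: the inclusions $A\leq A_1\leq Y_1$ and $A_1\leq A_0\leq Y_0$ give $A_1\leq \{Y_0,Y_1\}$, and the local maximality of all involved vertices with respect to $B$ follows from the shared-vertex constraint combined with the two-colour analysis of Lemma~\ref{tec2a}, which forces the remaining vertex of $Y_0$ to inherit loc maximality with respect to $A_1$ from that given with respect to $A_0$.

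For the inductive step, let $\Omega':=\{Y_0,\dots,Y_{t-1}\}$; the sub-chain $A\leq A_{t-1}\leq\dots\leq A_0$ together with these contractions satisfies the hypotheses for $t-1$, hence by induction $M':=\text{glb}_A(\Omega')\in C_{4(t-1)}$. The key structural observation is that $A_t\leq A_i\leq Y_i$ for each $i<t$, so $A_t$ is a common lower bound of $\Omega'$, and therefore $A_t\leq M'$. Writing $M=\text{glb}_A(\{M',Y_t\})$, it suffices to show the increment bound $|Y\setminus M|-|Y\setminus M'|\leq 4$, which yields $M\in C_{4t}$.

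For the increment bound I run a geometric analysis parallel to that of Lemma~\ref{tec2}. The local maximality of the vertices $\{y_1,y_2\}$ of $Y_t$ with respect to $A_t$ propagates, via Lemma~\ref{transmax} and $A_t\leq M$, to local maximality with respect to $M$. Arguing as in Lemma~\ref{tec2} --- exploiting the commutativity of distinct-colour expansions together with the consequence of loc maximality that the `fourth cell' in the surrounding $2\times 2$ square is not further subdivided in $Y$ --- the common parent of $y_1,y_2$ in $M$ is confined to this $2\times 2$ block of four $Y$-cells of equal length, and the dyadic closure of the $M'$-elements containing $y_1$ and $y_2$ stays inside this block; thus at most $4$ cells of $Y$ become newly involved. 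The principal difficulty is precisely this closure control when $y_j$ already appears in a non-trivial merger inside $M'$: one must verify that the enclosing $M'$-elements do not escape the bounding square, which is the two-colour combinatorial heart of the argument and mirrors the fourth-cell step in the proof of Lemma~\ref{tec2}.
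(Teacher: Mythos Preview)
Your inductive strategy is genuinely different from the paper's, but the inductive step has a real gap --- precisely the one you flag as ``the principal difficulty'' and then leave unresolved.

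The paper does not induct on $t$ at all. It decomposes $\Omega$ into its connected components $\Omega_1,\ldots,\Omega_r$ as a subgraph of $\Gamma_A$. For each $\Omega_i$ one takes $j_i$ to be the largest index with $Y_{j_i}\in\Omega_i$; then $A_{j_i}\leq\Omega_i$, and by the connectedness observation in Definition~\ref{defgamma} every vertex appearing in $\Omega_i$ is locally maximal with respect to $A_{j_i}$. Lemma~\ref{tec2} now gives $M_i:=\text{glb}_A(\Omega_i)\in C_4$ (or $C_2$ when $\Omega_i$ is a single edge), the $M_i$ are pairwise disjoint, and Lemma~\ref{remark2} sums to $M\in C_{4t}$.

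In your inductive step the whole argument rests on the claim that the $M'$-pieces $m'_1,m'_2$ containing $y_1,y_2$ stay inside the $2\times 2$ block $Q$. Your inductive hypothesis, however, only says $M'\in C_{4(t-1)}$; it gives no structural information about the individual pieces of $M'$. Saying the step ``mirrors the fourth-cell step in the proof of Lemma~\ref{tec2}'' is not enough: that proof works by taking a one-step expansion $M_1$ of $M$ and checking $M_1\leq Y_0,Y_1$, contradicting maximality of the glb. When you try the same move for $\text{glb}_A(\{M',Y_t\})$ you must verify $M_1\leq M'$, and this fails exactly when some piece of $M'$ inside $m$ straddles the cut you have made --- which nothing in your hypothesis rules out. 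To exclude this you would have to know that every non-trivial piece of $M'$ is an edge or a full square of $\Gamma_A$; but establishing that amounts to running the paper's connected-component decomposition on $\Omega'$, so the argument you need is the paper's argument transplanted inside your induction rather than avoided by it.

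Your base case is essentially fine: the propagation of local maximality from $A_0$ to $A_1$ for the remaining vertex of $Y_0$ does follow from the shared vertex having the same length in $A_1$ and lying in the same $A_1$-piece, so Lemma~\ref{tec2} applies with $B=A_1$.
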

\begin{proof} We may subdivide $\Omega$ into its connected components
$$\Omega=\bigcup_{i=1}^r\Omega_i.$$

For any $i \in \{ 1, \ldots , r \}$ there is  ${j_i} \in \{0,1, \ldots ,t \}$ such that $A_{j_i}\leq Y_{l_i}$ for any $Y_{l_i}\in\Omega_i$ with the elements of $Y$ contracted in $Y_{l_i}$ locally maximal with respect to $A_{j_i}$ (recall that $\Omega_i$ is connected).  Put $M_i=\text{glb}_A(\Omega_i)$.

If $\Omega_i$ contains at least two different contractions, Lemma \ref{tec2} gives that its connected component in $\Gamma_A$ is a square. In particular  $\Omega_i$ is contained in the set of four contractions representing the four sides of the square. Moreover, by the observation after Lemma \ref{tec2}, $M_i\in C_4$.

On the other hand, if all the elements of $\Omega_i$ are equal to some $Z$, then $M_i=Z\in C_2$.
Clearly, all $M_i$
are pairwise disjoint so if we put
$M=\text{glb}_A(\{M_1,\ldots,M_r\})$, then
$M=\text{glb}_A(\Omega)$ and Lemma \ref{remark2} implies
for $r \leq t$
$$|\text{vertices contracted in
}M|\leq\sum_{i=1}^r|\text{vertices contracted in }M_i | \leq 4r\leq 4t.$$
If $r = t+1$ then the elements of $\Omega$ are pairwise
disjoint and by Lemma \ref{remark2} $M \in C_{2t+2} \subseteq C_{4t}$.

\end{proof}

\noindent
Now we are ready to prove Theorem \ref{pushing}.

\begin{proof}(of Theorem \ref{pushing}) Fix any map
$$M:K_Y\to\{\text{Simple contractions of }Y\}$$
such that for any $A\in K_Y$, if $i$ is any of the elements
contracted in $M(A)$, then $i$ is locally maximal with respect to
$A$. We extend the above map $M$ to a map
$$M : \{\text{Poset of simplices of }K_Y\} \to K_Y$$ as follows: for any $t$-simplex $\sigma:A_t<A_{t-1}<\ldots<A_0$ we put
$$M(\sigma):=\text{glb}_{A_t}(M(A_t),\ldots,M(A_1),M(A_0)).$$

Proposition \ref{general} and Lemma \ref{propglb} imply that $M$ is a well defined order reversing poset map and that
$$A_t\leq M(\sigma)\in C_{4t}.$$
\end{proof}

\subsection{Construction of the null-homotopy.}

\begin{remark} Denote by $X^t$ the $t$-skeleton of a
  simplicial complex $X$. A simplicial complex $X$ is
  $t$-connected if it is $0$-connected, i.e.
  path-connected, and its $t$-th homotopy group vanishes. As
  $\pi_t(X,x_0)=[S^t,s_0;X,x_0]$, this means that every
  continuous pointed map 

$$\mu:(S^t,s_0)\buildrel \nu\over\to(X^t,x_0)\buildrel i_t\over\to(X,x_0)$$
is null-homotopic, i.e. homotopic to the constant map in $(X,x_0)$. Note, if $i_t$  is null-homotopic, then the composition $\mu=i_t\circ\nu$ will also be null-homotopic. We aim to show that $i_t$ is null-homotopic
for $|Y|$ big enough and $X=|K_Y|$.
\end{remark}

 \noindent Because of the following general result
the poset map $M$ constructed in Theorem \ref{pushing} will be useful.

\begin{lemma}\label{tec3}
Let $\frak P$ be a poset and consider an order reversing poset map
$$M:\{\text{Poset of simplices of }\frak P\}\to\frak P,$$
such that for any $\sigma: A_t
<\ldots
< A_0$, $A_t\leq M(\sigma)$ in $\frak P$. Then $M$ induces a map
$$f_t:|\frak P|^t\to |\frak P|$$
which is homotopy equivalent in $|\frak P|$ to the inclusion $i_t:|\frak P|^t\to|\frak P|$ and such that $f_t(\sigma)$ is contained in the realization of the subposet of those $B\in\frak P$ such that $M(\sigma)\leq B$.
\end{lemma}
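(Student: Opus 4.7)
My plan is to construct $f_t$ together with a homotopy $H:|\frak P|^t\times[0,1]\to|\frak P|$ from $i_t$ to $f_t$ inductively on the dimension $k\leq t$ of the skeleton, exploiting the fact that for any $B\in\frak P$ the subcomplex $|\frak P_{\geq B}|:=|\{B'\in\frak P\mid B\leq B'\}|$ is a cone with apex $B$, and is therefore contractible.

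For $k=0$ set $f_t(A):=M(A)$ for each vertex $A$; the inequality $A\leq M(A)$ provided by the hypothesis on $M$ produces an edge of $|\frak P|$ along which we linearly deform $A$ to $M(A)$, defining $H$ on $|\frak P|^0\times[0,1]$ with image in $|\frak P_{\geq A}|$. For the inductive step, suppose that for every simplex $\tau$ of dimension $<k$ we have already achieved $f_t(\tau)\subseteq|\frak P_{\geq M(\tau)}|$ and $H(\tau\times[0,1])\subseteq|\frak P_{\geq A_{\min(\tau)}}|$, where $A_{\min(\tau)}$ denotes the minimum vertex of $\tau$. Fix a $k$-simplex $\sigma:A_k<\ldots<A_0$. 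Since $M$ is order-reversing every proper face $\tau\leq\sigma$ satisfies $M(\sigma)\leq M(\tau)$, so by the inductive hypothesis $f_t|_{\partial\sigma}$ lands in the contractible cone $|\frak P_{\geq M(\sigma)}|$; extend it to $f_t|_\sigma$ inside this cone, which yields the image property claimed in the lemma.

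To extend $H$ across $\sigma\times[0,1]$, observe that the sphere $(\sigma\times\{0\})\cup(\partial\sigma\times[0,1])\cup(\sigma\times\{1\})$ already has all of its pieces inside the contractible cone $|\frak P_{\geq A_k}|$: the inclusion $i_t|_\sigma$ because every vertex of $\sigma$ is $\geq A_k$; the boundary homotopy by the inductive hypothesis, using $A_{\min(\tau)}\geq A_k$ for every face $\tau\leq\sigma$; and $f_t|_\sigma$ because $A_k\leq M(\sigma)$ implies $|\frak P_{\geq M(\sigma)}|\subseteq|\frak P_{\geq A_k}|$. Contractibility of $|\frak P_{\geq A_k}|$ then lets us extend $H$ over $\sigma\times[0,1]$ inside this cone, preserving the inductive hypothesis. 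The main obstacle is the careful bookkeeping of which cone $|\frak P_{\geq -}|$ houses each piece of data, since the simplex $\sigma$ itself need not lie in $|\frak P_{\geq M(\sigma)}|$; what makes the induction close is exactly the pair of inequalities $A_k\leq M(\sigma)$ and $M(\sigma)\leq M(\tau)$ for $\tau\leq\sigma$, which place the new $f_t$-image in the smaller cone required by the statement while confining the entire homotopy trace to the larger cone $|\frak P_{\geq A_k}|$.
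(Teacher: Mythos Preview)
Your proof is correct, but it follows a different route from the paper's. The paper proceeds by viewing the poset of simplices of $\frak P$ as the barycentric subdivision: it defines the ``minimum vertex'' map $h(\sigma)=A_t$, observes that $h(\sigma)\leq M(\sigma)$ for every $\sigma$, and then invokes the classical fact that two comparable poset maps induce homotopic maps on geometric realizations (\cite[6.4.5]{Benson}). Since $h\circ j=1_{\frak P}$ for the inclusion $j:\frak P\hookrightarrow\{\text{simplices of }\frak P\}$, one gets $f_t:=|M|\circ|j|\circ i_t\simeq i_t$ immediately; the image statement drops out because $|j|$ carries $\sigma$ to the realization of $\{\delta:\delta\subseteq\sigma\}$ and $M$ is order-reversing.

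Your approach is an explicit obstruction-theoretic construction: you build $f_t$ and the homotopy $H$ simplex by simplex, using only the contractibility of the cones $|\frak P_{\geq B}|$. The bookkeeping you describe (tracking $f_t(\sigma)\subseteq|\frak P_{\geq M(\sigma)}|$ while the entire homotopy trace over $\sigma$ sits in the larger cone $|\frak P_{\geq A_k}|$) is exactly what is needed, and the two inequalities $A_k\leq M(\sigma)$ and $M(\sigma)\leq M(\tau)$ for $\tau\leq\sigma$ make the induction close as you say. The trade-off is clear: the paper's argument is shorter and more conceptual but imports an external lemma, whereas yours is self-contained and gives explicit control over where the homotopy lives at every stage---information that the paper's proof does not directly provide.
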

\begin{proof} Consider the map
$$h:\{\text{Poset of simplices of }\frak P\}\to\frak P$$
such that $h(\sigma)=A_t$. Then as $h(\sigma)\leq M(\sigma)$ by a classical result in posets \cite[6.4.5]{Benson} we have $M\simeq h$. This means that $|h|\simeq|M|$.
Denote $j:\frak P\to\{\text{Poset of simplices of }\frak
P\}$ the inclusion, then $h\circ j=1_{\frak P}$. Therefore
$|1_{\frak P}|\simeq |M\circ j|$. Considering the composition
$$ f_t : |\frak P|^t\buildrel{i_t}\over\to|\frak P|\buildrel{|j|}\over\to|\{\text{Poset of simplices of }\frak P\}|\buildrel |M|\over\to|\frak P|$$
we deduce $f_t = |M|\circ |j|\circ i_t\simeq i_t$.
Finally note
that $|j|$ takes any simplex $\sigma$ to the geometric
realisation of the poset of those simplices $\delta$ such
that $\delta\subseteq\sigma$. Thus $f_t(\sigma)$ is contained in the realization of the subposet of those $B\in\frak P$ such that $M(\sigma)\leq B$.
\end{proof}

\noindent
As a corollary of  Definition \ref{def100}, Theorem \ref{pushing} and
Lemma \ref{tec3}
we obtain the following result.

\begin{proposition} \label{oxford1}
For any $t$ there is a map
$$f_t:|K_Y|^t\to |K_Y|$$
which is homotopy equivalent to the inclusion $i_t:|K_Y|^t\to|K_Y|$
and such that $f_t(\sigma)\subseteq\Sigma^t_{4t}$.
\end{proposition}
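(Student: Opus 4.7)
The plan is to deduce this proposition directly by combining Theorem \ref{pushing} with Lemma \ref{tec3}. The map $M$ produced by Theorem \ref{pushing} is precisely the input that Lemma \ref{tec3} requires, applied to $\frak P = K_Y$: an order reversing poset map from the poset of simplices of $|K_Y|$ into $K_Y$ satisfying $A_t \leq M(\sigma)$ for every simplex $\sigma: A_t < \dots < A_0$. The first step is therefore to feed $M$ into Lemma \ref{tec3}, obtaining a map $f_t \colon |K_Y|^t \to |K_Y|$ homotopy equivalent to the inclusion $i_t$, together with the assertion that, for each simplex $\sigma$ of $|K_Y|^t$, $f_t(\sigma)$ lies in the realization of the subposet $\{B \in K_Y \mid M(\sigma) \leq B\}$.

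The second step is to verify that this realization is contained in $\Sigma_{4t}$. By Theorem \ref{pushing} we have $M(\sigma) \in C_{4t}$, that is, $|Y \setminus M(\sigma)| \leq 4t$. For any $B \in K_Y$ with $M(\sigma) \leq B$, the element $B$ is a descendant of $M(\sigma)$, so $|Y \setminus B| \leq |Y \setminus M(\sigma)| \leq 4t$, and hence $B \in C_{4t}$. Consequently every chain $A_t < \dots < A_0$ in this subposet has its minimum $A_t \in C_{4t}$, so by Definition \ref{def100} it is a simplex of $\Sigma_{4t}$.

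Finally, since $\sigma$ is a $t$-simplex and $f_t$ is piecewise linear (as a composition of geometric realizations of poset maps), the image $f_t(\sigma)$ has dimension at most $t$; combined with the containment in $|\Sigma_{4t}|$ just established, this gives $f_t(\sigma) \subseteq \Sigma^t_{4t}$. I foresee no genuine obstacle: the proposition is essentially a bookkeeping assembly of Theorem \ref{pushing} and Lemma \ref{tec3}, the only small verification being the monotonicity that a descendant of an element of $C_{4t}$ is again in $C_{4t}$.
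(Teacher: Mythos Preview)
Your proof is correct and follows exactly the approach of the paper, which simply records the proposition as an immediate corollary of Definition~\ref{def100}, Theorem~\ref{pushing} and Lemma~\ref{tec3}. Your added verification that any $B\geq M(\sigma)$ lies in $C_{4t}$ (via $Y\setminus B\subseteq Y\setminus M(\sigma)$) and your dimension argument (the map $f_t=|M|\circ|j|\circ i_t$ is simplicial after barycentric subdivision, so it carries $t$-simplices into the $t$-skeleton) are precisely the bookkeeping the paper leaves implicit.
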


\begin{lemma} \label{important32}
For any  fixed $r,t$ there exists a function $\nu_r(t)$ such that if $|Y|\geq\nu_r(t),$ the inclusion of $\Sigma^t_r$ in $|K_Y|$ is null-homotopic.
\end{lemma}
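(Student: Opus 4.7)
The argument proceeds by induction on $r$. The base case is vacuous: since every contraction has arity $2$, the set $C_r$ is empty for $r<2$, and hence so is $\Sigma^t_r$. We may take $\nu_r(t)=0$ in this range.

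For the inductive step, assume the lemma for $r-1$ with threshold $\nu_{r-1}(t)$. The plan is to show that, for $|Y|$ sufficiently large, the inclusion $\Sigma^t_r\hookrightarrow |K_Y|$ is homotopic in $|K_Y|$ to a map whose image lies in $\Sigma^t_{r-1}$; the inductive hypothesis then concludes. A crucial preliminary observation is that for any simplex $\sigma=(A_t<A_{t-1}<\ldots<A_0)\in \Sigma^t_r$ one has $A_t\cap Y\subseteq A_i\cap Y$ for every $i$. Indeed, any $y\in A_t\cap Y$ must survive every expansion from $A_t$ to $A_i$: if $y$ were expanded at some stage, its descendants could never be recombined (since descendant relations only perform splits) and so $y$ could not reappear as a leaf of $Y\geq A_i$, contradicting $y\in Y$. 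Consequently every $A_i$ lies in $C_r$ and has at least $|Y|-r$ ``free'' $Y$-vertices in common with $A_t$. When $|Y|$ is much larger than $r$, this abundance of free vertices gives ample room for combinatorial choices.

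The homotopy is constructed by adjoining, for each $\sigma\in\Sigma^t_r\setminus \Sigma^t_{r-1}$, an auxiliary vertex $A_t'\in K_Y$ strictly between $A_t$ and $A_{t-1}$ with $A_t'\in C_{r-1}$. This yields a $(t+1)$-simplex $\sigma\cup\{A_t'\}$ in $|K_Y|$ whose boundary contains both $\sigma$ and the pushed-down simplex $(A_t'<A_{t-1}<\ldots<A_0)\in\Sigma^t_{r-1}$, thereby realising a homotopy from the inclusion of $\sigma$ to its pushed-down version. The required $A_t'$ is obtained by performing on $A_t$ a partial expansion that $A_{t-1}$ has already carried out and that strictly reduces the number of involved $Y$-vertices; typically this means choosing a ``lowest'' composite $c\in A_t\setminus Y$ (i.e.\ a composite whose direct expansion yields two $Y$-leaves) that $A_{t-1}$ separates. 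The existence of such a $c$, for every relevant $\sigma$, is guaranteed once $|Y|$ is large enough, since $A_t$ then contains both many composites and many free $Y$-vertices.

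The \emph{main obstacle} is coherence: different simplices prescribe different auxiliary vertices $A_t'$, and forcing the local choices to agree along face inclusions is delicate, because the face $\sigma\setminus\{A_t\}$ has a different bottom vertex and hence calls for its own auxiliary choice. I would handle this by a canonical rule (e.g.\ fixing a total ordering of $Y$ and always selecting the ``earliest'' valid composite) together with a skeleton-by-skeleton extension: first push the $0$-dimensional part, then the $1$-cells, and so on, at each stage consuming only boundedly many free vertices. The threshold $\nu_r(t)$ is then defined recursively, of the form $\nu_r(t)=\nu_{r-1}(t)+\phi(r,t)$ where $\phi$ is a polynomial margin sufficient to guarantee the available free vertices at each stage. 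Verifying that these canonical choices genuinely assemble into a continuous homotopy (rather than a mere simplex-wise retraction) is the most technical part of the argument and is where the specific combinatorics of the two-colour Higman algebra, in particular Lemma~\ref{tec2} and the structure of $\Gamma_A$, play their decisive role.
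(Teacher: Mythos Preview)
Your induction on $r$ does not get off the ground. The base case $r<2$ is indeed vacuous, but the first nontrivial case $r=2$ then asks you to push $\Sigma^t_2$ into $\Sigma^t_1=\emptyset$, i.e.\ to find an auxiliary vertex $A_t'\in C_1=\emptyset$, which is impossible. More generally, even for larger $r$ the vertex $A_t'$ you postulate need not exist: take $Y$ containing the four elements $m\alpha_1^i\alpha_2^j$ ($i,j\in\{1,2\}$) together with a large set $S\subset Y$, and set $A_t=\{m\}\cup S$, $A_{t-1}=\{m\alpha_1^1,m\alpha_1^2\}\cup S$. Then $A_t<A_{t-1}$ is a simple expansion, so there is nothing strictly between them, and both satisfy $|Y\setminus A_t|=|Y\setminus A_{t-1}|=4$; no $A_t'\in C_3$ with $A_t<A_t'<A_{t-1}$ exists. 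Your ``lowest composite whose expansion yields two $Y$-leaves and which $A_{t-1}$ separates'' simply fails to exist in such examples, regardless of how large $|Y|$ is. The coherence problem you flag is therefore moot; the local step already fails. Finally, the appeal to Lemma~\ref{tec2} is misplaced: that lemma feeds into Theorem~\ref{pushing}, whereas Lemma~\ref{important32} is proved in the paper without any reference to the colour structure of $\Gamma_A$.

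The paper's argument is quite different. It runs induction on $t$, not on $r$: one builds a null-homotopy $F_t:\Sigma^t_r\times I\to|K_Y|$ extending $F_{t-1}$, keeping track of a bound $\mu_r(t)$ on the total number of $Y$-elements touched by $F_{t-1}$ on the boundary of any $t$-simplex. If $|Y|\geq 2+(t+1)\mu_r(t-1)$ one can choose a simple contraction $b$ of two $Y$-elements disjoint from everything appearing in $F_{t-1}(\partial\sigma\times I)$; forming global greatest lower bounds with $b$ produces a cylinder and then a cone to $b$, which fills in $\sigma$. The point is not to decrease $r$, but to exploit the abundance of \emph{free} elements of $Y$ to cone off, exactly as in Brown's original argument for $V$.
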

\begin{proof} We adapt Brown's argument in \cite[4.20]{Brown2} to our context. For $|Y|$ big enough we will  construct,  by induction on $t$,  a
null-homotopy $$F_t:\Sigma^t_r\times I\to|K_Y|$$
such that $F_t ( - , 0)$ is the
identity map and $F_t ( -, 1)$ is the
constant map  sending everything  to the point $a \in K_Y$. More
precisely, we do the following: we show that there are
functions $\nu_r(t)$, $\mu_r(t)$ such that for
$|Y|\geq\nu_r(t)$ there is a homotopy $F_t$ as before, such
that for any $t$-simplex $\sigma\in\Sigma^t_r$,
$F_t(\sigma\times I)\subseteq \widehat{\Sigma}_{\mu_r(t)},$
where $\widehat{\Sigma}_s$ is the set of subcomplexes $T$ of
$\Sigma_s$ such that the union of all
elements of $Y$ that are contracted in the vertex of some
simplex of $T$ has at most $s$ elements.

{\it  The case $t=0$:} We choose any simple
contraction $a$ of $Y$. Hence it involves 2 vertices,
i.e. elements of $Y$. 
Let $A$ be a point of $\Sigma_r^0$ i.e. $A$
 is a
contraction of $Y$ involving at most $r$ vertices. Now, if
$|Y|\geq r+4$, we may choose a set of $2$ vertices  disjoint
to both those contracted in $A$ and those contracted in
$a$. Let $b_0$ be a simple contraction of any colour of $Y$ corresponding
to these two vertices. Then
$$A \geq gglb(A, b_0) \leq b_0 \geq gglb(b_0,a) \leq a$$
is a path linking $A$ with $a$ in $\widehat{\Sigma}^0_{r+4}$.
 Therefore we get the claim with
$$\nu_r(0)=r+4,$$
$$\mu_r(0)=r+4.$$

{\it Induction step}: We assume there is a null-homotopy
$F_{t-1}:\Sigma^{t-1}_r\times I\to K_Y$. We want to extend
$F_{t-1}$ to $F_t$.
 Let $\sigma: A_t< A_{t-1}< ... <A_0$ be a $t$-simplex in $\Sigma^t_r$. For any face $\tau$ of $\sigma$  of dimension $t-1$  we have
$F_{t-1}(\tau\times I)\subseteq \widehat{\Sigma}_{\mu_r(t-1)}.$
 This means that if we denote $\delta\sigma=\cup_{i=1}^{t+1}\tau_i$, then
$$\Delta:=F_{t-1}(\delta\sigma\times I)=\cup
F_{t-1}(\tau_i\times I)\subseteq
\widehat{\Sigma}_{(t+1)\mu_r(t-1)}.$$
Now, if $|Y|\geq 2+(t+1)\mu_r(t-1)$  there are
at least $2$ vertices of $Y$ not involved in any contraction
in $F_{t-1}(\delta\sigma\times I)$. Let $b$ be a simple
contraction of any colour of $Y$ contracting these 2 vertices.

We claim that the homotopy $F_{t-1}$ can be extended to $F_t:\sigma\times I\to|K_Y|$ with
$$F_t(\sigma\times I)\subseteq \widehat{\Sigma}_{2+(t+1)\mu_r(t-1)}.$$
As $b$ is a contraction of $Y$ disjoint to all contractions
$A$ of $Y$ such that  $A\in F_{t-1}(\delta\sigma\times I)$, we may consider the global greatest lower bound of $b$ and $A$ which we denote $\text{gglb}(A,b)$. Note that this is just the result of contracting in $A$ those elements which are contracted in $b$.
Analogously we denote by $\text{gglb}(\Delta,b)$ the subcomplex given by $\text{gglb}(A,b)$ for all $A \in \Delta.$ The same notation is also used for simplices in $\Delta$.
Also note that for all $A \in \Delta$, $\text{gglb}(A,b) \leq b$ and we can always form the cone with base $\text{gglb}(\Delta,b)$ and vertex $b$.

We claim that the homotopy $F_{t}(\sigma\times I)$ can be  built up by gluing:
\begin{itemize}
\item[i)] the cylinder given by $\Delta$ and $\text{gglb}(\Delta,b)$

\item[ii)] the cone formed by $\text{gglb}(\Delta,b)$ and $b$.
\end{itemize}

Note that for any $l$-simplex $\tau:A_l<A_{l-1}<\ldots<A_0$ lying in $\Delta$ then the following $l+1$-simplices:
$$\text{gglb}(A_l,b)<\text{gglb}(A_{l-1},b)<\ldots<\text{gglb}(A_i,b)<A_i<A_{i-1}<\ldots<A_0$$
for $i=l,\ldots,0$ fill up the cylinder formed by  $\tau$ and $\text{gglb}(\tau,b)$ (recall that $\text{gglb}(\tau,b)$ is given by $\text{gglb}(A_l,b)<\text{gglb}(A_{l-1},b)<\ldots<\text{gglb}(A_0,b)$).

Furthermore, the cone formed by  $\text{gglb}(\tau,b)$ and $b$ is also filled up via the $t+1$-simplex
$$\text{gglb}(A_l,b)<\text{gglb}(A_{l-1},b)<\ldots<\text{gglb}(A_0,b)<b.$$

We shall now explain how the above constructions yield the extension of the homotopy :

\noindent
(1) Consider the cylinder with base the simplex $\sigma$ and
top the simplex
$gglb(\sigma, b)$ and glue to the cylinder
 the cone with base $gglb(\sigma,b)$ and apex
$b$.

\bigskip
\setlength{\unitlength}{0.7cm}
\begin{picture}(12,8)(-0.5,-0.5)
\put(0,0){\line(1,0){8}}
\put(8,0){\line(-2,1){4}}
\put(0,0){\line(2,1){4}}
\put(0,3){\line(1,0){8}}
\put(8,3){\line(-2,1){4}}
\put(0,3){\line(2,1){4}}
\put(0,0){\line(0,1){3}}
\put(8,0){\line(0,1){3}}
\put(4,2){\line(0,1){3}}
\put(0,3){\line(1,1){4}}
\put(8,3){\line(-1,1){4}}
\put(4,5){\line(0,1){2}}
\put(4.3,7){$b$}
\put(1.5,3.2){$gglb(\sigma,b)$}
\put(2,0.2){$\sigma$}
\end{picture}

\bigskip

\centerline{Figure 11}

\bigskip

Let $\sigma \cup \widetilde{\Sigma}$ be the boundary of Figure 11. Then
$\sigma$ is homotopic to $\widetilde{\Sigma}$ via a homotopy, see
 Figure 11, fixing
$\partial \sigma$.

\noindent
(2) The following picture illustrates the homotopy $F_{t-1}$
squeezing $\partial \sigma$ to the point $a$.

\bigskip
\setlength{\unitlength}{0.7cm}
\begin{picture}(8,3)(-0.5,-0.5)
\put(0,0){\line(1,0){8}}
\put(8,0){\line(-2,1){4}}
\put(0,0){\line(2,1){4}}
\put(4,1){$a$}
\put(3.7,0.8){$.$}
\put(2,0.2){$\Delta$}
\end{picture}

\bigskip

\centerline{Figure 12}

\bigskip\noindent
(3) Consider the cylinder with bottom $\Delta$ and top
$gglb(\Delta,b)$ and glue to it the cone with bottom $gglb(\Delta,b)$
and vertex $b$.

\bigskip
\setlength{\unitlength}{0.7cm}
\begin{picture}(12,8)(-0.5,-0.5)
\put(0,0){\line(1,0){8}}
\put(8,0){\line(-2,1){4}}
\put(0,0){\line(2,1){4}}
\put(1,3){\line(1,0){8}}
\put(9,3){\line(-2,1){4}}
\put(1,3){\line(2,1){4}}
\put(0,0){\line(1,3){1}}
\put(8,0){\line(1,3){1}}
\put(4,2){\line(1,3){1}}
\put(1,3){\line(3,4){3}}
\put(9,3){\line(-5,4){5}}
\put(5,5){\line(-1,2){1}}

\put(4.3,7){$b$}
\put(2.5,3.1){$gglb(\Delta,b)$}
\put(2,0.2){$\Delta$}
\put(4,1){$a$}
\put(3.7,0.8){$.$}
\put(5,4){$gglb(a,b)$}
\put(4.7,3.8){$.$}
\end{picture}

\bigskip

\centerline{Figure 13}

\bigskip

Note that $\Delta \cup \widetilde{\Sigma}$ is the boundary of Figure 13.
Thus $\widetilde{\Sigma}$ and $\Delta$ are homotopy equivalent via a
homotopy, see Figure 13,  fixing $\partial \Delta = \partial \sigma$. Set $\mu_r(t)=2+(t+1)\mu_r(t-1)$.
Then by (1) and (3) $\sigma$ and $\Delta$ are homotopy
equivalent via a homotopy, inside $\widehat{\Sigma}_{\mu_r(t)}$, which fixes $\partial \sigma$. This
completes the proof of the fact that $F_{t-1}$ is extendable
to a homotopy $F_t$ (inside $\widehat{\Sigma}_{\mu_r(t)}$) that
contracts $\sigma$ to the point $a$. Therefore the inductive step is proven for
$$\nu_r(t)=\mu_r(t)=2+(t+1)\mu_r(t-1).$$

 \end{proof}

\begin{theorem} \label{important31}
There exists a function $\alpha(t)$ such that if $|Y|\geq\alpha(t),$  the inclusion of  $|K_Y|^t$ in $|K_Y|$ is null-homotopic.
\end{theorem}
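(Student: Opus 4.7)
The plan is simply to combine Proposition \ref{oxford1} with Lemma \ref{important32}. By Proposition \ref{oxford1}, for any $t$ there is a map $f_t : |K_Y|^t \to |K_Y|$ which is homotopic to the inclusion $i_t : |K_Y|^t \to |K_Y|$ and whose image lies in $\Sigma^t_{4t}$. Thus $f_t$ factors as
$$|K_Y|^t \xrightarrow{\,\bar f_t\,} \Sigma^t_{4t} \xrightarrow{\,j\,} |K_Y|,$$
where $j$ is the inclusion.

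By Lemma \ref{important32} applied with $r = 4t$, there is a function $\nu_{4t}(t)$ such that whenever $|Y| \geq \nu_{4t}(t)$, the inclusion $j : \Sigma^t_{4t} \to |K_Y|$ is null-homotopic. Composing a null-homotopy of $j$ with $\bar f_t$ yields a null-homotopy of $f_t$, and since $f_t \simeq i_t$, the inclusion $i_t$ is itself null-homotopic.

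Setting $\alpha(t) := \nu_{4t}(t)$ gives the desired function. The only step requiring any thought is the observation that null-homotopies are preserved under pre-composition and under homotopy equivalence of maps, both of which are standard. There is no genuine obstacle here: all the technical work has already been carried out in Theorem \ref{pushing} (the pushing procedure producing $f_t$) and in Lemma \ref{important32} (the inductive construction of the null-homotopy of $\Sigma^t_r \hookrightarrow |K_Y|$).
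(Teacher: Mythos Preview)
Your proof is correct and follows exactly the same approach as the paper: factor $f_t$ through the inclusion $\Sigma^t_{4t}\hookrightarrow |K_Y|$, apply Lemma~\ref{important32} with $r=4t$, and set $\alpha(t)=\nu_{4t}(t)$.
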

\begin{proof}
Consider the homotopy equivalent maps
$i_t,f_t:|K_Y|^t\to|K_Y|$ given by Proposition \ref{oxford1}. Since the image of
$f_t$ is  contained in $\Sigma^t_{4t}$, $f_t$
factors through the inclusion of $\Sigma^t_{4t}$ in $K_Y$. But we have just proven that this last inclusion is null-homotopic whenever $|Y|\geq\nu_{4t}(t)$ and therefore in that case $f_t$ and $i_t$ are also null-homotopic.
Therefore it suffices to set $\alpha(t):=\nu_{4t}(t)$.
\end{proof}

\begin{corollary} \label{connectivity1} There exists a function $\alpha(t)$ such that if $|Y|\geq\alpha(t),$  $K_Y$ is $t$-connected.
\end{corollary}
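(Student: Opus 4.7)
The plan is to derive the corollary as an essentially formal consequence of Theorem \ref{important31} together with standard facts from homotopy theory. Recall that a simplicial complex is $t$-connected exactly when it is non-empty, path-connected, and $\pi_i = 0$ for $1 \leq i \leq t$. As already noted in the remark preceding Lemma \ref{tec3}, the vanishing of $\pi_i$ amounts to showing that every pointed continuous map $S^i \to |K_Y|$ is null-homotopic.

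First I would set $\alpha(t)$ to be the function supplied by Theorem \ref{important31} and assume $|Y| \geq \alpha(t)$. Given any $i$ with $0 \leq i \leq t$ and any pointed map $\mu : (S^i, s_0) \to (|K_Y|, x_0)$, cellular approximation (applied to $S^i$ equipped with its standard CW-structure of dimension $i$) replaces $\mu$ up to pointed homotopy by a map factoring through the $i$-skeleton $|K_Y|^i$. Since $i \leq t$, this map further factors through $|K_Y|^t$, and so $\mu$ is homotopic to a composition
\[
S^i \longrightarrow |K_Y|^t \buildrel i_t \over\hookrightarrow |K_Y|.
\]
By Theorem \ref{important31}, the inclusion $i_t$ is null-homotopic once $|Y| \geq \alpha(t)$, hence so is the whole composition; thus $\mu$ is null-homotopic and $\pi_i(|K_Y|, x_0) = 0$.

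The remaining points are bookkeeping. Non-emptiness and path-connectedness (the case $i = 0$, together with the existence of a basepoint) follow because $\alpha(t) \geq \alpha(0) = \nu_0(0)$ in the construction of Lemma \ref{important32}, so $|K_Y|$ contains at least one contraction and is $0$-connected. This establishes $t$-connectedness of $|K_Y|$.

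There is really no hard step here: the entire content of the corollary is packaged into Theorem \ref{important31}, and the only ingredient added is the standard fact that when the inclusion of the $t$-skeleton is null-homotopic, every map from an $i$-sphere ($i \leq t$) is as well.
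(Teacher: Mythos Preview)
Your argument is correct and is exactly the approach the paper has in mind: the paper gives no separate proof of the corollary, having already explained in the remark before Lemma~\ref{tec3} that any $\mu:(S^i,s_0)\to(|K_Y|,x_0)$ factors (up to homotopy) through $|K_Y|^t$, so null-homotopy of $i_t$ from Theorem~\ref{important31} forces $\pi_i(|K_Y|)=0$ for all $i\le t$. Your only addition is the harmless bookkeeping about non-emptiness, which is immediate since $\alpha(t)\ge 4$ guarantees $|Y|\ge 2$ and hence at least one simple contraction.
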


\subsection{Finiteness properties of $2V$}\hfill\break

\noindent
Now we are ready to prove that the group $2V$ is of type
$\FP_\infty$. To do that, we will verify the conditions
of \cite[Cor.~3.3]{Brown2}   with respect to the
complex $|\frak A|$ defined in Definition \ref{def00}. 
As before, consider the filtration of $|\frak A|$ given by
$$\frak A_n:=\{Y\in\frak A\mid|Y|\leq n\}.$$
Lemmas \ref{contractibility} and \ref{finitemodsV}  and Remark \ref{finitestab} imply that all that remains is to prove the following theorem.

\begin{theorem}\label{conectivity} The connectivity of the pair of complexes $(|\frak A_{n+1}|,|\frak A_n|)$ tends to infinity as $n\to\infty$.
\end{theorem}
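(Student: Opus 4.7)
The plan is to exhibit, for each $t\ge 0$, an integer $N(t)$ such that the pair $(|\mathfrak A_{n+1}|,|\mathfrak A_n|)$ is $t$-connected whenever $n\ge N(t)$, with $N(t)$ read off directly from the function $\alpha$ of Corollary~\ref{connectivity1}. The strategy mirrors \cite{Brown2}: analyse the inclusion $|\mathfrak A_n|\hookrightarrow|\mathfrak A_{n+1}|$ as the attaching of a cone on $|K_Y|$ for each newly available admissible $Y$ of size $n+1$, and then invoke the already proved high connectivity of each such base.

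The first step I would carry out is a combinatorial description of the attachment. A simplex of $|\mathfrak A_{n+1}|$ is a strict chain $A_0<A_1<\cdots<A_k$ with $|A_k|\le n+1$; since cardinality is strictly increasing along such a chain, the simplex lies in $|\mathfrak A_n|$ precisely when $|A_k|\le n$, and otherwise its top vertex $Y:=A_k$ is unique and has $|Y|=n+1$. Hence for every admissible $Y$ with $|Y|=n+1$, the closed star of $Y$ in $|\mathfrak A_{n+1}|$ is the cone $C|K_Y|$ with apex $Y$ and base $|K_Y|\subseteq|\mathfrak A_n|$, and the closed stars of two distinct such $Y, Y'$ meet only inside $|\mathfrak A_n|$ because no chain contains two vertices of maximal cardinality. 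This yields the pushout decomposition
\[
|\mathfrak A_{n+1}| \;=\; |\mathfrak A_n|\,\cup\!\!\bigcup_{|Y|=n+1}\!\! C|K_Y|,
\]
with cones glued to $|\mathfrak A_n|$ along pairwise internally disjoint bases.

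The second step feeds in Corollary~\ref{connectivity1}: for $n+1\ge\alpha(t)$, every base $|K_Y|$ appearing above is $t$-connected, so every pair $(C|K_Y|,|K_Y|)$ is $(t+1)$-connected via the isomorphism $\pi_k(C|K_Y|,|K_Y|)\cong\pi_{k-1}(|K_Y|)$. The third step is the routine homotopy-theoretic conclusion. Because the attaching bases are pairwise disjoint outside $|\mathfrak A_n|$, the cofibration quotient collapses to
\[
|\mathfrak A_{n+1}|/|\mathfrak A_n| \;\simeq\; \bigvee_{|Y|=n+1}\Sigma|K_Y|,
\]
which is $(t+1)$-connected since suspending a $t$-connected space gives a $(t+1)$-connected one. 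A standard cone-attachment argument, iterating the single-cone case and combining via excision exactly as in \cite[\S 3]{Brown2}, then gives $(t+1)$-connectedness of the whole pair $(|\mathfrak A_{n+1}|,|\mathfrak A_n|)$. Taking $N(t)=\alpha(t)-1$ finishes the proof. I do not anticipate a genuine obstacle here: the technical heart of the matter is already done in Theorem~\ref{pushing} and Corollary~\ref{connectivity1}, and the only point needing care is the pairwise-disjointness of the attached closed stars, which is immediate from uniqueness of the top element of a chain in $\mathfrak A$.
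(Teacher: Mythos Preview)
Your proposal is correct and follows essentially the same approach as the paper: both argue that $|\mathfrak A_{n+1}|$ is obtained from $|\mathfrak A_n|$ by attaching, for each admissible $Y$ with $|Y|=n+1$, the cone $C|K_Y|$ along its base $|K_Y|$, and both then invoke Corollary~\ref{connectivity1} to deduce high connectivity of the pair. Your write-up supplies more detail (the explicit quotient description as a wedge of suspensions) and records the slightly sharper conclusion that the pair is $(t+1)$-connected rather than merely $t$-connected, but the argument is the same one the paper sketches by citing~\cite[4.17]{Brown2}.
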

\begin{proof} We use the same argument as in
  \cite[4.17]{Brown2} i.e. note that $|\frak A_{n+1}|$ is obtained from $|\frak A_n|$ by gluing cones with base $K_{Y}$ and top $Y$ for every $Y \in {\frak A_{n+1}} \setminus {\frak A_n}$. By Corollary \ref{connectivity1}, if $n+1 \geq \alpha(t)$ we have that $K_Y$ is $t$-connected, hence $(|\frak A_{n+1}|,|\frak A_n|)$ is $t$-connected.
\end{proof}

\begin{theorem}\label{main2} The Brin group on 2 colours each of arity 2 i.e. $2V$, is of type $\F_\infty$.
\end{theorem}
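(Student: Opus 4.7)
The plan is to apply Brown's criterion \cite[Cor.~3.3]{Brown2} to the action of $2V$ on the geometric realization $|\frak A|$ of the poset of admissible subsets. Brown's criterion says that if a group $G$ acts on a contractible CW-complex $X$ with a filtration $\{X_n\}$ by $G$-subcomplexes such that (a) the stabilizer of every cell is of type $\F_\infty$, (b) each $X_n$ has finitely many $G$-orbits of cells, and (c) the connectivity of the pair $(X_{n+1},X_n)$ tends to infinity with $n$, then $G$ is of type $\F_\infty$.

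First, I take $X=|\frak A|$ and $X_n=|\frak A_n|$, where $\frak A_n=\{Y\in\frak A\mid |Y|\leq n\}$. By Lemma \ref{contractibility} the complex $|\frak A|$ is contractible, so hypothesis of a contractible ambient complex is satisfied. Next I verify condition~(a): by Remark \ref{finitestab}, the stabilizer of any admissible subset $Y$ in $2V$ is a finite group (the group of permutations of~$Y$), so in particular it is of type $\F_\infty$; since the stabilizer of a simplex of $|\frak A|$ is contained in the stabilizer of its top vertex, all cell stabilizers are finite, hence $\F_\infty$. Condition~(b) follows from Lemma \ref{finitemodsV}, which asserts that $2V$ acts transitively on the admissible sets of each fixed cardinality, so that $|\frak A_n|/2V$ is a finite complex for every~$n$.

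The only nontrivial condition is~(c), and this is precisely the content of Theorem \ref{conectivity}, which has already been established as a consequence of Corollary \ref{connectivity1}; namely, $|\frak A_{n+1}|$ is obtained from $|\frak A_n|$ by coning off each new top vertex $Y$ along $K_Y$, and the $t$-connectedness of $K_Y$ for $|Y|\geq \alpha(t)$ translates directly into $t$-connectedness of the pair $(|\frak A_{n+1}|,|\frak A_n|)$ once $n+1\geq\alpha(t)$.

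With all four hypotheses verified, Brown's criterion immediately yields that $2V$ is of type $\F_\infty$. (I expect no genuine obstacle at this stage: the main work, namely the pushing procedure of Theorem \ref{pushing} and the inductive null-homotopy of Lemma \ref{important32}, has already been carried out to provide Corollary \ref{connectivity1}. This final step is purely an assembly of the pieces via Brown's criterion.)
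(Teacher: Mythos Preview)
Your proposal is correct and follows essentially the same approach as the paper: you invoke Lemmas \ref{contractibility} and \ref{finitemodsV}, Remark \ref{finitestab}, and Theorem \ref{conectivity}, and then apply Brown's criterion \cite[Cor.~3.3]{Brown2}, which is exactly what the paper does. Your version simply spells out in more detail why the hypotheses of Brown's criterion are met (e.g., that simplex stabilizers are subgroups of vertex stabilizers and hence finite), but the logical structure is identical.
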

\begin{proof} By Lemmas \ref{contractibility} and \ref{finitemodsV}, Remark \ref{finitestab} and Theorem \ref{conectivity} we may apply \cite[Cor.~3.3]{Brown2}.
\end{proof}

\begin{remark} As a by-product, we get by
  \cite[Cor.~3.3]{Brown2}  a new proof
of  the fact that $2V$ is finitely presented. This was first proved in \cite{Brin2}, where an explicit finite presentation was constructed.
\end{remark}

\section{The case $s = 3$}

\noindent
In this section  we consider  Brin's group $sV$ for $s = 3$.
Our objective is to show that $3V$ is of type $\F_{\infty}$ by
adapting the construction of the function $M$
of Lemma \ref{tec3} to the case $s = 3$. In particular we show that Theorem \ref{pushing} holds with $M \in C_{8t}$. This immediately leads to a modification of Proposition \ref{oxford1} that $f_t(\sigma) \in \Sigma^t_{8t}.$ The rest of the proof will be analogous to the previous case.

\noindent As before, we fix a $Y$ and  prove that $K_Y$ is
$t$-connected if $|Y|$ is sufficiently large. For
$A < Y$ we consider the coloured graph $\Gamma_A$ as in Definition \ref{defgamma}. This
time the graph is embedded in 3 dimensional real space and
the three possible colours $\{ a,b,c \}$ correspond to the
axes of the standard coordinate system of $\R^3$. For any subgraph $\Delta\subseteq\Gamma_A$ we put
$$\text{glb}_A(\Delta):=\text{glb}_A\{\text{Simple contractions associated to the edges of }\Delta\}.$$

Consider a connected component $\Delta$ of $\Gamma_A$. The vertices of $\Delta$ correspond, via the geometric realisation of $3V$,
to subparallelepipeds of the unit cube $I$, all of the same shape
and size.  For simplicity,  we draw them as cubes and call them subcubes.
 Let $i$ be an element (i.e. a vertex) of $\Delta$.  By some abuse of notation we shall also label by $i$ the subcube corresponding to the element $i$ of $\Delta.$

\noindent
We claim that the vertices of $\Delta$ are inside a stack of 8
 subcubes, see Figure 14. Obviously one of these subcubes
 corresponds to $i$. 
Observe that we do not claim that  all the subcubes in the
stack correspond 
to elements of $Y$, only that $\Delta$ is a set consisting
of some of 
the subcubes in the stack.
To see that the claim holds,  let $i$ be $[\alpha_1, \alpha_2] \times [\beta_1 , \beta_2] \times [\gamma_1, \gamma_2]$.
The interval $ A_0 = [\alpha_1, \alpha_2]$ comes from a set of binary subdivisions of $[0,1]$. The left descendant of an interval $[x,y]$ is $[x, (x+y)/2]$ and the right descendant of $[x,y]$ is $[(x+y)/ 2, y]$. Then $A_0$ is a descendant of some interval $J_A$ that is subdivided into $A_0$ and $A_1$ in the binary subdivision. Recall, see for example Lemma \ref{tec2a}, that each cube in a connected set can only have one neighbour of each colour/direction. Define  $B_1$ and $C_1$ analogously. Then the  cubes in the stack containing $\Delta$ are precisely the cubes $A_i \times B_j \times C_k$, where $i,j,k \in \{ 0,1 \}$.


\bigskip
\setlength{\unitlength}{0.8cm}
\begin{picture}(6,6)(-0.1,-0.1)

\put(5,0){\line(0,1){2}}
\put(7,0){\line(0,1){2}}
\put(5,2){\line(1,0){2}}
\put(5,0){\line(1,0){2}}
\put(5,0){\line(2,1){1}}
\put(7,0){\line(2,1){1}}
\put(5,2){\line(2,1){1}}
\put(7,2){\line(2,1){1}}
\put(6,0.5){\line(0,1){2}}
\put(8,0.5){\line(0,1){2}}
\put(6,2.5){\line(1,0){2}}
\put(6,0.5){\line(1,0){2}}

\put(5,2){\line(0,1){2}}
\put(7,2){\line(0,1){2}}
\put(5,4){\line(1,0){2}}
\put(5,2){\line(1,0){2}}
\put(5,2){\line(2,1){1}}
\put(7,2){\line(2,1){1}}
\put(5,4){\line(2,1){1}}
\put(7,4){\line(2,1){1}}
\put(6,2.5){\line(0,1){2}}
\put(8,2.5){\line(0,1){2}}
\put(6,4.5){\line(1,0){2}}
\put(6,2.5){\line(1,0){2}}

\put(7,2){\line(0,1){2}}
\put(9,2){\line(0,1){2}}
\put(7,4){\line(1,0){2}}
\put(7,2){\line(1,0){2}}
\put(7,2){\line(2,1){1}}
\put(9,2){\line(2,1){1}}
\put(7,4){\line(2,1){1}}
\put(9,4){\line(2,1){1}}
\put(8,2.5){\line(0,1){2}}
\put(10,2.5){\line(0,1){2}}
\put(8,4.5){\line(1,0){2}}
\put(8,2.5){\line(1,0){2}}

\put(7,0){\line(0,1){2}}
\put(9,0){\line(0,1){2}}
\put(7,2){\line(1,0){2}}
\put(7,0){\line(1,0){2}}
\put(7,0){\line(2,1){1}}
\put(9,0){\line(2,1){1}}
\put(7,2){\line(2,1){1}}
\put(9,2){\line(2,1){1}}
\put(8,0.5){\line(0,1){2}}
\put(9,0.5){\line(0,1){2}}
\put(8,2.5){\line(1,0){2}}
\put(8,0.5){\line(1,0){2}}

\put(6,0.5){\line(0,1){2}}
\put(8,0.5){\line(0,1){2}}
\put(6,2.5){\line(1,0){2}}
\put(6,0.5){\line(1,0){2}}
\put(6,0.5){\line(2,1){1}}
\put(8,0.5){\line(2,1){1}}
\put(6,2.5){\line(2,1){1}}
\put(8,2.5){\line(2,1){1}}
\put(7,1){\line(0,1){2}}
\put(10,1){\line(0,1){2}}
\put(7,3){\line(1,0){2}}
\put(7,1){\line(1,0){2}}

\put(6,2.5){\line(0,1){2}}
\put(8,2.5){\line(0,1){2}}
\put(6,4.5){\line(1,0){2}}
\put(6,2.5){\line(1,0){2}}
\put(6,2.5){\line(2,1){1}}
\put(8,2.5){\line(2,1){1}}
\put(6,4.5){\line(2,1){1}}
\put(8,4.5){\line(2,1){1}}
\put(7,3){\line(0,1){2}}
\put(9,3){\line(0,1){2}}
\put(7,5){\line(1,0){2}}
\put(7,3){\line(1,0){2}}

\put(8,2.5){\line(0,1){2}}
\put(10,2.5){\line(0,1){2}}
\put(8,4.5){\line(1,0){2}}
\put(8,2.5){\line(1,0){2}}
\put(8,2.5){\line(2,1){1}}
\put(10,2.5){\line(2,1){1}}
\put(8,4.5){\line(2,1){1}}
\put(10,4.5){\line(2,1){1}}
\put(9,3){\line(0,1){2}}
\put(11,3){\line(0,1){2}}
\put(9,5){\line(1,0){2}}
\put(9,3){\line(1,0){2}}

\put(8,0.5){\line(0,1){2}}
\put(10,0.5){\line(0,1){2}}
\put(8,2.5){\line(1,0){2}}
\put(8,0.5){\line(1,0){2}}
\put(8,0.5){\line(2,1){1}}
\put(10,0.5){\line(2,1){1}}
\put(8,2.5){\line(2,1){1}}
\put(10,2.5){\line(2,1){1}}
\put(9,1){\line(0,1){2}}
\put(11,1){\line(0,1){2}}
\put(9,3){\line(1,0){2}}
\put(9,1){\line(1,0){2}}

\end{picture}

\bigskip
\centerline{A stack of 8 cubes}

\medskip
\centerline{Figure 14}

\bigskip

\noindent
For a connected component $\Delta$ of $\Gamma_A$ we define
the enveloping stack of $\Delta$ to be the smallest set
$U(\Delta)$ of some subcubes from the 8 cube stack defined
above such that  $U(\Delta)$ contains all  $i \in\Delta$, and the union of the elements of $U(\Delta)$ is a cube.

\noindent
Note that if one of the vertices of $\Delta$ is
locally maximal with respect to some $C < Y$ such that $A
\leq C$ then every vertex of $\Delta$ is locally maximal
with respect to $C$.  This leads to the following definition.

\begin{definition}\label{starcon}
A connected component $\Delta$ of $\Gamma_A$ is called
$*$-connected 
if there is some admissible set $C$ such that  $A \leq C<Y$
and
 every vertex of $\Delta$ is locally maximal with respect to $C$.
\end{definition}

\noindent
The
following diagram  exhibits possible $*$-connected components of
the graph $\Gamma_A$ for $A < Y$. Note that parallel edges
are labeled by the same colour.

\bigskip
\setlength{\unitlength}{0.7cm}
\begin{picture}(22,4)(-0.1,-0.5)
\put(1.5,0){\line(0,1){2}}
\put(1,1){$a$}
\put(3,0){\line(0,1){2}}
\put(2.5,1){$a$}
\put(5,0){\line(0,1){2}}
\put(4.5,1){$a$}
\put(3,2){\line(1,0){2}}
\put(3.8,1.5){$b$}
\put(3,0){\line(1,0){2}}
\put(3.8,-0.5){$b$}


\put(6.6, 1){$a$}
\put(7.8, -0.5){$b$}
\put(7.5, 0.6){$c$}
\put(7,0){\line(0,1){2}}
\put(7,0){\line(1,0){2}}
\put(7,0){\line(2,1){1}}
\put(9,0){\line(2,1){1}}
\put(7,2){\line(2,1){1}}
\put(8,0.5){\line(0,1){2}}
\put(8,0.5){\line(1,0){2}}

\put(11.6, 1){$a$}
\put(12.8, -0.5){$b$}
\put(12.5, 0.6){$c$}
\put(12,0){\line(0,1){2}}
\put(14,0){\line(0,1){2}}
\put(12,2){\line(1,0){2}}
\put(12,0){\line(1,0){2}}
\put(12,0){\line(2,1){1}}
\put(14,0){\line(2,1){1}}
\put(12,2){\line(2,1){1}}
\put(14,2){\line(2,1){1}}

\put(13,0.5){\line(0,1){2}}
\put(15,0.5){\line(0,1){2}}
\put(13,2.5){\line(1,0){2}}
\put(13,0.5){\line(1,0){2}}
\end{picture}

\bigskip

\centerline{Figure 15}

\bigskip
\noindent
We call the graphs in Figure 15 an edge, a square, an open book and a cube respectively.

\begin{lemma}\label{graphs}  Let $\Delta$ be a $*$--connected component of $\Gamma_A$. Then, up to changing the colours, $\Delta$ is one of the graphs in
  Figure 15. Moreover, if $\Delta$ is not an open book, then for
  $M=\text{glb}_A(\Delta)$  the vertices involved in $M$ lie inside $\Delta$. In particular, $M\in C_8$.
\end{lemma}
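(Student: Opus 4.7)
The argument splits into a combinatorial classification of $\Delta$ and an explicit description of $M = \mathrm{glb}_A(\Delta)$ in each non-exceptional case. The setup is already in place before the lemma statement: $\Delta$ embeds in its enveloping stack $U(\Delta)$ of at most $8$ equal-sized subcubes, and Lemma \ref{tec2a} implies that each vertex of $\Delta$ has at most one incident edge per colour.

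For the classification, the key tool is a two-dimensional version of the square-closure argument from the proof of Lemma \ref{tec2}. I would argue that whenever two edges of distinct colours $a, b$ meet at a vertex $v \in \Delta$, the $*$-connectedness hypothesis combined with local maximality forces the three involved cubes to share a common parent whose $ab$-face has been subdivided into four congruent quarters, and the fourth quarter to also be a single cube of the same size. There are then two possibilities: either the fourth cube is itself a vertex of $Y$, in which case the two missing edges of the $ab$-square belong to $\Gamma_A$ and $\Delta$ incorporates a closed square; or the fourth position is coarser in $Y$ and the closure fails, producing an open-book page in the sense of Example \ref{openbook}. Propagating this dichotomy through the connected component, and noting that only the three colour planes $ab$, $ac$, $bc$ are available, a short case analysis on the set of colours appearing at vertices of $\Delta$ yields exactly the four possibilities of Figure 15: a single edge (one colour present); a closed square (two colours, every closure succeeds); a full cube (three colours, every closure succeeds); or an open book (some closure fails).

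For the moreover claim, I would exhibit $M$ directly in each of the three closing cases by performing sequential contractions along the edges of $\Delta$: an edge fuses $2$ cubes by one contraction; a closed square fuses $4$ cubes by two parallel $a$-contractions followed by one $b$-contraction; and a full cube fuses $8$ cubes by three rounds of parallel contractions in the three colour directions. In every case the vertices of $Y$ involved in $M$ are precisely the vertices of $\Delta$, which all lie in $U(\Delta)$; their number is therefore at most $|U(\Delta)| \leq 8$, and $M \in C_8$.

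The step I expect to be the hardest is the completeness of the classification: one must rule out exotic connected subgraphs of the $3$-cube graph that are compatible with the edge constraints but do not fit into the four named families --- for instance, a V-shape of two different-coloured edges whose corresponding square fails to close \emph{and} does not arise as a genuine open-book page, or a partial cube missing a single vertex. This requires carefully pushing the square-closure dichotomy at every vertex of $\Delta$ and checking that the resulting local data always glue into one of the four prescribed shapes, which should ultimately reduce to a finite case check inside $U(\Delta)$.
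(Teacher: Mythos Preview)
Your approach is genuinely different from the paper's and, as written, has real gaps.

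The paper does not classify $\Delta$ first and then build $M$; it works in the opposite direction. It \emph{starts} from $M=\mathrm{glb}_A(\Delta)$ (which exists by Lemma~\ref{existenceglb}), isolates the unique element $m\in M$ from which all vertices of $\Delta$ descend, and then analyses the subdivision tree of $m$ towards $Y$. The glb property is used at each stage: any simple expansion $M<M_1\leq Y$ must genuinely split $\Delta$ across the two halves of $m$ (otherwise $M$ was not maximal). This forces the first cut to be in some direction $a$, the second in a different direction $b$, and leaves only three possibilities for what happens in direction $c$: no further cut (square), all four quarters cut (cube, with $*$-connectedness bounding the depth to one), or exactly three quarters cut (open book). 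The classification of $\Delta$ and the description of $M$ thus come out simultaneously, and $A\leq M$ is automatic throughout.

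Your bottom-up route runs into two problems the paper's route sidesteps. First, your square-closure dichotomy asserts that if the fourth cube is an element of $Y$ then the two missing edges of the $ab$-square lie in $\Gamma_A$. This is not free: being adjacent cubes of equal size in $Y$ does not by itself guarantee that the corresponding simple contraction $Z$ satisfies $A\leq Z$; one has to know that the fused cube occurs in the descent from $A$. In the paper this is exactly what the analysis of the tree below $m$ provides. Second, in your ``moreover'' step you build a candidate $M'$ by successively contracting along edges of $\Delta$, but you never check that $A\leq M'$ (the intermediate contractions after the first round are no longer edges of $\Gamma_A$), nor that $M'$ is actually the greatest lower bound rather than merely some lower bound. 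Both facts are immediate in the paper's argument because $M$ is taken to be $\mathrm{glb}_A(\Delta)$ from the outset.

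The finite case-check you flag as the hardest step is precisely what the top-down argument makes unnecessary: once you know every expansion of $M$ must separate $\Delta$, the exotic subgraphs you worry about (a bare V-shape, a $7$-vertex partial cube, and so on) are excluded structurally rather than by enumeration.
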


\begin{proof} We argue as in  Lemma \ref{tec2}. We consider the element $m\in M$ which yields $\Delta$, i.e. the vertices of $\Delta$ are obtained from $m$ by the halving operations. Observe that $M=\{m\}\cup(M\cap Y)$.
Consider the geometric realisation of $M$. Then $m$ is a subcube of the unitary cube and the enveloping stack $U(\Delta)$  lies inside $m$.
Since $M<Y$ we may choose some simple expansion $M<M_1\leq Y$ of colour $a$, say.
The expansion $M<M_1$ corresponds to halving the cube $m$ by
a hyperplane of direction (i.e. colour) $a$. Furthermore,
this halving also yields  a halving of the enveloping stack
$U(\Delta)$. In other words, not all the vertices of
$\Delta$ are in the same half of $m$, as that would mean
that $M=M_1$. Moreover, as $\Delta$ is connected, this
halving can be inverted, by using the commutativity
relations, to give a simple contraction of $Y$ of direction $a$. If $M_1=Y$, then $\Delta$ is an edge and $M\in C_2$.

\noindent
Note, that since the halving operation of $m$ in direction $a$ halves $U(\Delta)$, we have an edge $e$ in $\Delta$  with label $a$  and vertices $i,j$.  In particular,  the elements $i$ and $j$ represent neighbouring cubes  in $U(\Delta)$,  one  contained in  $m \alpha_a^1$ and the other  in $m \alpha_a^2$.
Since $e \in \Gamma_A$ there is a contraction of $Y$
contracting precisely $i$ and $j$. This implies that in the
process of obtaining $Y$ from $M$ via halving operations on
$m$, there is another chain of halving operations starting
with halving in a direction different from $a$,  say $b$. Hence, by the commutativity relations,
there exists $M_2$ with $M_1<M_2\leq Y$
 such that $M_2$ consists of halving  both $m\alpha_a^1$ and $m\alpha_a^2$ in direction $b$.
Clearly, this allows inversion and therefore the above procedure for $a$ can also be applied to $b$ . After performing these two subdivisions we get a stack $S$ of four cubes. Moreover, we may assume that there are vertices of $\Delta$ lying in at least three of those four cubes. Otherwise $\Delta$ would be either disconnected or $M \neq glb_A(\Delta)$. Note also that, to obtain $\Delta$, only halving of those four cubes in a direction $c$ different from directions $a$ and $b$ is possible. So it remains  to consider the  three possibilities below. Recall,  we are assuming that $\Delta$ is $*$-connected.

\noindent (1)
If none of the cubes is halved, then $M_2=Y$, $\Delta$ is a square and $M\in C_4$.

\noindent (2)
Suppose all four cubes are halved at least once. Then  the rooted tree representing the way $\Delta$ is obtained  from $m$,
starts as the first tree in Figure 16 below. In this case we may use the commutativity relations to get a rooted tree with halving in direction $c$ at the beginning. Therefore, the assumptions that $\Delta$ is connected and that $M=\text{glb}_A(\Delta)$  imply that in fact there is only one halving in direction $c$.  In particular, the rooted tree is precisely the first tree in Figure 16. Thus $\Delta$ is a cube, $m$ yields the whole stack of 8 cubes, $M\in C_8$ and $M$ involves precisely the vertices of $\Delta$.

\noindent (3)
Finally, assume that only three of the four cubes are halved
at least once in direction $c$. Then we may assume that the
rooted tree representing the halving operations done on
$m$, begins exactly as the second tree in Figure 16 below. Note that
at this point, and as a consequence of the geometric
interpretation, we know that $\Delta$ is a subgraph of the
open book $B$  containing the three edges labeled $c$. Also, $B$ lies inside the 8 cube stack associated to $\Delta$. Furthermore, the elements of $B$ correspond to elements of $Y$.
We shall show that $\Delta$ is exactly the open book $B$. Since $\Delta$ is connected it suffices to show  that any two neighbouring cubes in the open book $B$
can be contracted in $Y$. Consider the admissible set $M_a$ obtained as follows: First, halve $m$ in direction $a$ and assume that the second half of $m$, i.e. $m \alpha_a^2$,
contains the only one of the cubes not cut in direction
$c$. Then perform in $m \alpha_a^2$ all halvings needed to
reach those elements of $Y$ that are descendents of $m \alpha_a^2$. The first half of $m$, $m \alpha_a^1$, is not cut anymore. Then $M\leq
M_a$, $M_a=\{m\alpha_a^1\}\cup(M_a\cap Y)$.

Observe that, in the first half of $m$, there are only two colours in the path needed
to obtain the elements of $\Delta\cap\Gamma_{M_a}$ from
$M_a$. As $\Delta \cap \Gamma_{M_a}$ is $*$-connected in $\Gamma_{M_a}$, we may
apply Lemma \ref{tec2} and deduce that the square of the
open book $B$
with edges labeled by $b$ and $c$ is in $\Delta$. The same
argument with $b$ substituted by $a$ implies that
the square of the open book $B$ with edges labeled by $c$ and
$a$ is in $\Delta$. Thus
 $\Delta$ is  the open book $B$.

\bigskip

\begin{tikzpicture}[scale=0.9]

  \draw[black] (0,0) -- (0.5,1) --(1,0);
  \draw[black] (1.5,0) -- (2,1) --(2.5,0);
  \draw[black] (3,0) -- (3.5,1) --(4,0);
   \draw[black] (4.5,0) -- (5,1) --(5.5,0);

  \draw[black,dashed] (0.5,1) -- (1.25,2) --(2,1);
   \draw[black,dashed] (3.5,1) -- (4.25,2) --(5,1);

  \draw[black,dotted] (1.25,2) --(2.75,3) --(4.25,2);

   \draw[black] (8,0) -- (8.5,1) --(9,0);
  \draw[black] (9.5,0) -- (10,1) --(10.5,0);
  \draw[black] (11,0) -- (11.5,1) --(12,0);

  \draw[black,dashed] (8.5,1) -- (9.25,2) --(10,1);
   \draw[black,dashed] (11.5,1) -- (12.25,2) --(13,1);

  \draw[black,dotted] (9.25,2) --(10.75,3) --(12.25,2);

\end{tikzpicture}

\bigskip

\centerline{Figure 16} 

\bigskip

\end{proof}

\bigskip\noindent We are now ready to prove the analogue to Theorem \ref{pushing} with $M \in C_{8t}$.

\begin{theorem} \label{modified3} Let $s= 3.$ There exists an order reversing poset map
$$M:\{\text{Poset of simplices of }|K_Y|\}\to K_Y$$
such that for any $t$-simplex $\sigma:A_t<A_{t-1}<\ldots<A_0$ we have
$$A_t\leq M(\sigma)\in C_{8t}.$$
\end{theorem}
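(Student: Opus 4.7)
The plan is to mirror the proof of Theorem \ref{pushing}, with Lemma \ref{graphs} playing the role of Lemma \ref{tec2} and the constant $4$ replaced by $8$ to reflect that the largest $*$-connected components of $\Gamma_A$ for $s=3$ are cubes with $8$ vertices rather than squares with $4$ vertices.

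Concretely, I will fix for each $A \in K_Y$ a simple contraction $M(A)$ of $Y$ whose two contracted vertices are locally maximal with respect to $A$, and for a $t$-simplex $\sigma : A_t < A_{t-1} < \cdots < A_0$ I will set
$$M(\sigma) := \text{glb}_{A_t}(M(A_t), M(A_{t-1}), \ldots, M(A_0)).$$
Existence is guaranteed by Lemma \ref{existenceglb}, and Lemma \ref{propglb} gives both $A_t \leq M(\sigma)$ and the order-reversing property, exactly as in the proof of Theorem \ref{pushing}.

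The substantive work is proving $M(\sigma) \in C_{8t}$, which demands a 3-colour analogue of Proposition \ref{general}. I will view the edges $M(A_0),\ldots,M(A_t)$ as a subgraph $\Omega$ of $\Gamma_{A_t}$ and decompose it into connected components $\Omega_1,\ldots,\Omega_r$ with $r\leq t+1$. For each $\Omega_j$, take $C_j$ to be the most expanded $A_i$ contributing an edge to $\Omega_j$; then by Lemma \ref{transmax} every endpoint of every edge in $\Omega_j$ is locally maximal with respect to $C_j$, so $\Omega_j$ is $*$-connected (Definition \ref{starcon}) and all of its edges have a common length in $C_j$ by the connectedness clause of Definition \ref{defgamma}. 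Lemma \ref{graphs} then classifies the connected component of $\Gamma_{A_t}$ containing $\Omega_j$ as an edge, square, open book, or cube, each of whose glb lies in $C_8$, and by Lemma \ref{propglb} we deduce $\text{glb}_{A_t}(\Omega_j) \in C_8$. Summing via Lemma \ref{remark2} yields $|Y \setminus M(\sigma)| \leq 8r$, and the usual dichotomy ($r \leq t$ gives $8r \leq 8t$; $r=t+1$ forces each $\Omega_j$ to be a single edge contributing only $2$ involved vertices, giving $2(t+1) \leq 8t$ for $t\geq 1$) closes the argument.

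The main obstacle is the open book case of Lemma \ref{graphs}. As Example \ref{openbook} warns, $\text{glb}_A(\Delta)$ of an open book $\Delta$ can involve vertices of $Y$ that lie outside $\Delta$, and the $13$-vertex variant even shows that naive bounds can fail badly if one is not careful. The key point I will need to pin down is that for a $*$-connected open book component the missing eighth position of the enveloping $2 \times 2 \times 2$ stack must be occupied by a single unsubdivided medium cube -- otherwise length-matching subdivisions of that position would produce additional edges of $\Gamma_{A_t}$ that extend $\Delta$ into a cube (or a reducible configuration) -- and consequently $\text{glb}_{A_t}(\Delta)$ is obtained by retracting the whole stack to its root, involving the $6$ book vertices plus exactly that single extra cube, for a total of $7 \leq 8$. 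Verifying this geometric dichotomy carefully is the crux of the adaptation and is precisely what makes $8t$ (rather than some larger or unbounded constant) the correct bound.
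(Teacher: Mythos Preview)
Your approach mirrors the $s=2$ case too closely and has a genuine gap at the ``summing via Lemma~\ref{remark2}'' step. That lemma requires the contractions $M_j := \text{glb}_{A_t}(\Omega_j)$ to be pairwise disjoint, but in the open-book situation they need not be. Concretely, take an open book $\Delta \subseteq \Gamma_{A_t}$ labelled as in Example~\ref{openbook}, and suppose $\Omega$ contains the three edges $1\text{--}2$ (colour $b$), $2\text{--}3$ (colour $a$), and $5\text{--}6$ (colour $b$). These split into two connected components of $\Omega$, namely $\Omega_1 = \{1\text{--}2,\ 2\text{--}3\}$ and $\Omega_2 = \{5\text{--}6\}$. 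Then, exactly as in Example~\ref{openbook}, $\text{glb}_{A_t}(\Omega_1) = \{m\} \cup (Y \setminus \{1,\dots,7\})$ involves all seven descendants of $m$, in particular $5$ and $6$, while $\text{glb}_{A_t}(\Omega_2)$ involves $\{5,6\}$; the two are not disjoint and Lemma~\ref{remark2} does not apply. The same phenomenon --- the glb of a piece of an open book can swallow the entire set of descendants of $m$, including the ``seventh cube'' lying outside the book --- also blocks the variant where one groups by connected components of $\Gamma_{A_t}$ rather than of $\Omega$.

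The paper circumvents this with two devices your proposal lacks. First, $M(A)$ is not an arbitrary locally maximal edge: whenever $\Gamma_A$ has an open book among its $*$-connected components, $M(A)$ is deliberately taken to be its \emph{middle} edge. Second, the partition of $\Omega$ in Step~(3) is not into connected components of $\Omega$ but is built greedily via the components $\Delta_{A_{\alpha_i}}$, which forces the resulting pieces $N_i$ to be genuinely disjoint. The middle-edge choice is then exploited in Step~(2): when several $M(A_i)$ fall into a common open book $\Delta_A$, one passes to an auxiliary $\widetilde B$ in which the book breaks up as a disjoint square and edge (Figure~18), yielding $N \in C_6$ \emph{without} ever computing the glb of the whole book. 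Your sketch of why a $*$-connected open book has glb in $C_8$ is essentially right (the missing portion of the stack is a single larger cube, so the glb involves~$7$ vertices), but that observation alone does not repair the disjointness failure above.
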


\begin{proof}
We split the proof into three steps.  Fix a  linear ordering on the colours $a,b,c$.

\medskip\noindent
(1) The definition of $M$ on vertices of $K_Y$. For each admissible $A<Y$ we define a designated edge $M(A)$ as follows:

\noindent
Consider the graph
  $\Gamma_A$. We define $M(A)$ as an edge of $\Gamma_A$ such that if $\Gamma_A = \Gamma_B$ for some $B < Y$, then
  $M(A) = M(B)$. If there is some open book between the $*$-connected components of $\Gamma_A$, we define
  $M(A)$ to be the middle edge of the open book with middle edge of smallest possible colour amongst those open books which are $*$-connected components of $\Gamma_A$.

\medskip
\setlength{\unitlength}{0.7cm}
\begin{picture}(22,4)(-0.1,-0.5)

\put(7,0){\line(0,1){2}}
\put(6.5,1){$c$}
\put(11,0){\line(0,1){2}}
\put(10.5,1){$c$}
\put(7,2){\line(1,0){4}}
\put(7,0){\line(1,0){4}}
\put(9,0){\line(0,1){2}}
\put(7.5,0.8){$M(A)$}
\put(7.8,1.5){$a$}
\put(9.8,1.5){$b$}
\put(7.8,-0.5){$a$}
\put(9.8,-0.5){$b$}

\end{picture}

\medskip

\centerline{Figure 17: The open book extended}

\bigskip

\noindent
If $\Gamma_A$ does not have an open book as a  $*$-connected component, but contains a
$*$-connected component, which is a separate edge $e$, i.e. case 1
of Figure 15, we define $M(A) = e$. Again, there might be more than
one such edge $e$ and we choose  $e$  of smallest
possible colour.

\noindent
If $\Gamma_A$ does not
contain $*$-connected components, which are open books or
separate edges,
we choose $M(A)$
to be an edge of the smallest possible colour of  a $*$-connected component of $\Gamma_A$.

\medskip\noindent
From now on we write $\Delta_A$ for the $*$-connected
component of $\Gamma_A$ such that $M(A) \in \Delta_A$.
We can further assume that if $\Delta_A = \Delta_B$ then
$M(A) = M(B)$.
\medskip

\noindent
(2) Let $A=A_r<A_{r-1}<\ldots<A_0$ be contractions of $Y$ such that all $M(A_i)$ belong to  $\Delta_{A}$. Recall that  each $M(A_i)$ is a simple contraction of $Y$.
Let $\Omega=\{M(A_r),\ldots,M(A_0)\}$ and put $N=\text{glb}_{A}(\Omega)$. We aim to show that $N\in C_8$ and that the vertices of $Y$ involved in $N$ are inside $\Delta_A$.

\noindent
Observe that $\Delta_A$ is $*$-connected. So it must be one of the graphs of Figure 15. If it is an edge, a square or a cube then our claim that $N \in C_8$ follows from Lemma \ref{graphs}.
So we may assume that $\Delta_A$ is an open book. We have
$$\Delta_A=\Delta_A\cap\Gamma_{A_r}\supseteq\ldots\supseteq\Delta_A\cap \Gamma_{A_0}.$$

\noindent The definition of $M$ yields that   if $\Delta_A=\Delta_A\cap\Gamma_{A_r}=\ldots=\Delta_A\cap \Gamma_{A_0}$ then $M(A_r)=\ldots=M(A_0)$. In this case $N=M(A_r)\in C_2$. So we may
 assume that there is some $0\leq i< r$ such that
 $$\Delta_A=\Delta_A\cap\Gamma_{A_r}=\ldots=\Delta_A\cap\Gamma_{A_{i+1}}
 \supsetneq
 \Delta_A\cap \Gamma_{A_i}.$$

\noindent Denote $B=A_i$. We
have $$\Delta_B\subseteq\Delta_A
\cap\Gamma_B\subsetneq \Delta_A.$$

\noindent Moreover, by the definition of $M$, $M(A)=M(A_r)=\ldots=M(A_{i+1})$  is the middle edge
of the open book $\Delta_A$.

\noindent
We claim that $\Delta_A \cap
\Gamma_B$ is a subgraph of one of the following two graphs:

\bigskip
\setlength{\unitlength}{0.7cm}
\begin{picture}(11,6)(-1,0)

\put(7,0){\line(0,1){2}}
\put(6.5,1){$c$}
\put(11,0){\line(0,1){2}}
\put(10.5,1){$c$}
\put(7,2){\line(1,0){2}}
\put(7,0){\line(1,0){2}}
\put(9,0){\line(0,1){2}}
\put(8.5,1){$c$}
\put(7.8,1.5){$a$}
\put(7.8,-0.5){$a$}
\put(4,1){$\Gamma_2$ :}

\put(7,3){\line(0,1){2}}
\put(6.5,4){$c$}
\put(11,3){\line(0,1){2}}
\put(10.5,4){$c$}
\put(9,5){\line(1,0){2}}
\put(9,3){\line(1,0){2}}
\put(9,3){\line(0,1){2}}
\put(8.5,4){$c$}
\put(9.8,4.5){$b$}
\put(9.8,2.5){$b$}
\put(4,4){$\Gamma_1 $ :}

\end{picture}
\bigskip

\centerline{Figure 18}
\goodbreak

\bigskip
\noindent
Observe that $\Delta_A\cap\Gamma_B$ is not connected. Indeed,
in the process of obtaining $B$ from $A$ there was a cutting of a cube containing $U(\Delta_A)$ which halved $U(\Delta_A)$. The  structure of $\Delta_A$ as an open book with three parallel edges $c$ implies that such  a halving cannot be in direction $c$.  The case when the direction of this halving is $a$ corresponds to $\Gamma_1$ (see Fig. 18), i.e. $\Delta_A \cap \Gamma_B \subseteq \Gamma_1$ and the case when the direction is $b$ corresponds to $\Gamma_2$, i.e. $\Delta_A \cap \Gamma_B \subseteq \Gamma_2$.
Alternatively, consider the second tree in Figure 16. 
The commutativity relations do not allow us to move $c$ to the top, whereas having $a$ or $b$ at the top yields a disconnected graph.
A similar argument shows that there is an expansion  $A_{i+1} <  \widetilde{B}$ such that $\Delta_A \cap \Gamma_{\widetilde{B}} = \Gamma_k$, where we have fixed one $k \in \{ 1, 2 \}$ such that $\Delta_A \cap \Gamma_B \subseteq \Gamma_k$.

For any $0\leq j\leq i$ we also have $M(A_j)\in\Delta_{A_j}\subseteq\Delta_A\cap\Gamma_B$.
Then since $\Delta_A \cap \Gamma_B \subseteq \Gamma_k$ we have  $\Omega \subset (\Delta_A \cap \Gamma_B)  \cup \{ M(A) \} \subseteq  \Gamma_k = \Delta_A \cap \Gamma_{\widetilde{B}} \subseteq \Gamma_{\widetilde{B}}$. Hence $A < \widetilde{B} \leq \Omega$ and so
$$\text{glb}_{\widetilde{B}}(\Gamma_k)\leq \text{glb}_{\widetilde{B}}(\Omega)=N.$$

\noindent
Now split $\Gamma_k = D_1 \cup D_2$ into its connected
components, where $D_1$ is the edge and $D_2$ is the
square. Note that $D_1$ and $D_2$ are $*$-connected
components of $\Gamma_{\widetilde{B}}$, hence  Lemma
\ref{graphs}  implies thet $glb_{\widetilde{B}}(D_i)$
involves (i.e. contracts)  $2^i$ vertices ( i.e. elements)  of $Y$. Then by Lemma \ref{remark2}
$
glb_{\widetilde{B}}(D_1 \cup D_2)
$ contracts $2 + 4 = 6$ vertices of $Y$. Hence $N \in C_6 \subseteq C_8$.

\goodbreak

\medskip\noindent
(3)  The definition of $M$ on a simplex of $K_Y$:

\noindent
Let $\sigma : A_t < A_{t-1} < \ldots < A_0$ be a
simplex of $K_Y$ and $t \geq 1$.
Thus
$\Gamma_{A_0} \leq \ldots \leq \Gamma_{A_{t-1}} \leq
\Gamma_{A_t}$ and we have already defined  $M(A_i)$ as an
edge of $\Gamma_{A_i}$ for all $i$.
Let $\Omega = \{ M(A_t),
M(A_{t-1}), \ldots, M(A_0) \}$, which is a
set of
edges of  $\Gamma_{A_t}$.

\noindent Consider the following partition of $\Omega$:

\noindent Put $\alpha_1=t$ and
$$\Omega_1=\Omega\cap\Delta_{A_{\alpha_1}}.$$

\noindent Assume $\Omega_{r-1}$ is defined. If $\bigcup_{i=1}^{r-1}\Omega_i\neq\Omega$,  choose the largest $j\in \{0,...,t\} $ such that
$$M(A_j)\in\Omega\setminus(\bigcup_{i=1}^{r-1}\Omega_i).$$
Rename $A_j$ to $A_{\alpha_r}$
and put $\Omega_r=\Omega\cap\Delta_{A_{\alpha_r}}$. Hence at
each step we have a subchain (i.e. subsimplex) of $\sigma$ satisfying the conditions of  (2).

At some point we will have $\Omega=\bigcup_{i=1}^k \Omega_i$. Let
$$N_i:=\text{glb}_{A_{\alpha_i}}(\Omega_i).$$

By step (2), $N_i\in C_8$ and the vertices of $Y$ involved
in $N_i$ are contained in $\Delta_{A_{\alpha_i}}$. Now we
claim  that these $N_i$ are pairwise disjoint contractions
of $Y$. To see this, let $i\neq j$. We may assume that
$A_{\alpha_i}\leq A_{\alpha_j}$ and therefore
$\Gamma_{A_{\alpha_i}}\supseteq\Gamma_{A_{\alpha_j}}$. As
$\Delta_{A_{\alpha_i}}$ is a $*$-connected component in $\Gamma_{A_{\alpha_i}}$, we deduce that either $\Delta_{A_{\alpha_i}}$ and $\Delta_{A_{\alpha_j}}$ are disjoint (and in this case $N_i$ and $N_j$ are also disjoint) or $\Delta_{A_{\alpha_j}}\subseteq\Delta_{A_{\alpha_i}}$. But the second case is impossible by the construction of the partition above.

\noindent Next we define
$$M(\sigma) = \text{glb}_A(\Omega).$$

\noindent Clearly,
$$M(\sigma)=\text{glb}_A (\{ N_1, \ldots, N_k\}) $$
and, if $k \leq t$, then
$$
M(\sigma) \in C_{8k} \subseteq C_{8t}.
$$
Finally, if $k = t+1$ then all $\Omega_i$
contain precisely one edge, so for all $i$ we have $N_i =
M(A_i)$ and so $M(\sigma)  \in C_{2(t+1)} \subseteq C_{8t}.$

\end{proof}

\noindent
As a corollary we get
  the following modified version of  Proposition
  \ref{oxford1}.

\begin{corollary} \label{modified30}  For any $t$ there is a map
$$f_t:|K_Y|^t\to |K_Y|$$
which is homotopy equivalent to the inclusion $i_t:|K_Y|^t\to|K_Y|$
 such that $f_t(\sigma)\subseteq\Sigma^t_{8t}$.
\end{corollary}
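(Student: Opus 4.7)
The plan is to mimic the proof of Proposition \ref{oxford1} verbatim, but feeding in the order-reversing poset map $M$ produced by Theorem \ref{modified3} in place of the one from Theorem \ref{pushing}. Concretely, I take $\mathfrak P = K_Y$ and apply Lemma \ref{tec3} to the map $M$ of Theorem \ref{modified3}. The lemma immediately manufactures a map
$$f_t \colon |K_Y|^t \longrightarrow |K_Y|$$
that is homotopic to the inclusion $i_t$, and for every $t$-simplex $\sigma$ of $|K_Y|$ it places $f_t(\sigma)$ inside the geometric realisation of the subposet $\{B \in K_Y : M(\sigma) \leq B\}$.

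The one thing left to verify is that this subposet (and hence $f_t(\sigma)$) sits inside $\Sigma^t_{8t}$. For a $t$-simplex $\sigma: A_t < A_{t-1} < \ldots < A_0$, Theorem \ref{modified3} gives $M(\sigma) \in C_{8t}$, i.e.\ $|Y \setminus M(\sigma)| \leq 8t$. Now if $B \in K_Y$ satisfies $M(\sigma) \leq B < Y$, then $B$ is obtained from $M(\sigma)$ by a finite sequence of simple expansions, so $Y \setminus B \subseteq Y \setminus M(\sigma)$, and therefore $B \in C_{8t}$. In particular, any chain $B_k < B_{k-1} < \ldots < B_0$ inside the subposet $\{B \geq M(\sigma)\}$ has its bottom vertex in $C_{8t}$, which by Definition \ref{def100} means the chain is a simplex of $\Sigma_{8t}$. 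Thus $f_t(\sigma) \subseteq \Sigma_{8t}$.

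Finally, since $f_t = |M| \circ |j| \circ i_t$ is built from simplicial maps that do not increase simplex dimension, and $\sigma$ has dimension at most $t$ (it lies in the $t$-skeleton), the image $f_t(\sigma)$ consists of simplices of dimension at most $t$ in $\Sigma_{8t}$, hence lies in $\Sigma^t_{8t}$, as required. There is no serious obstacle in this argument; all the genuine combinatorial work — replacing the bound $C_{4t}$ of the two-colour case by $C_{8t}$ for three colours, via the analysis of open books, squares, cubes, and disjoint $\ast$-connected components of $\Gamma_A$ — has already been absorbed into Theorem \ref{modified3}. The corollary is a purely formal repackaging of that theorem through Lemma \ref{tec3}.
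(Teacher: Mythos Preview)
Your proof is correct and follows exactly the approach the paper intends: the paper gives no explicit argument for Corollary \ref{modified30} (nor for its $s=2$ analogue, Proposition \ref{oxford1}), merely citing Definition \ref{def100}, Lemma \ref{tec3}, and the relevant version of the pushing theorem, and you have simply unpacked that citation. Your added verifications---that every $B$ with $M(\sigma)\leq B<Y$ lies in $C_{8t}$, and that the composite $|M|\circ|j|\circ i_t$ does not raise simplex dimension---are the right details to supply.
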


\noindent From now on we can proceed analogously to the case $s=2$. As a first step we have a three-dimensional analogue to Theorem \ref{important31}:

\begin{corollary} \label{important-135} Let $s=3.$ There exists a function $\alpha(t)$ such that if $|Y|\geq\alpha(t),$  the inclusion of  $|K_Y|^t$ in $|K_Y|$ is null-homotopic.
\end{corollary}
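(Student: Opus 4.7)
The plan is to mirror the passage from Proposition~\ref{oxford1} to Theorem~\ref{important31} in the two-colour case. Corollary~\ref{modified30} already supplies, for every $t$, a map $f_t : |K_Y|^t \to |K_Y|$ homotopic to the inclusion $i_t$ whose image is contained in $\Sigma^t_{8t}$. So the task reduces to the three-dimensional analogue of Lemma~\ref{important32}: for each $r$ and $t$, to produce a function $\nu_r(t)$ such that whenever $|Y| \geq \nu_r(t)$ the inclusion $\Sigma^t_r \hookrightarrow |K_Y|$ is null-homotopic. Given such a function, set $\alpha(t) := \nu_{8t}(t)$; then $f_t$ factors through $\Sigma^t_{8t} \hookrightarrow |K_Y|$, which is null-homotopic, so $i_t \simeq f_t$ is null-homotopic too.

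For the analogue of Lemma~\ref{important32} I would reuse Brown's inductive construction verbatim. The null-homotopy $F_t : \Sigma^t_r \times I \to |K_Y|$ is built by induction on $t$, at each stage choosing a simple contraction $b$ of $Y$ (of any colour) whose two underlying vertices are disjoint from all vertices appearing in $F_{t-1}(\delta \sigma \times I) \subseteq \widehat{\Sigma}_{(t+1)\mu_r(t-1)}$, and then extending $F_{t-1}$ over $\sigma$ by gluing the cylinder on $\Delta = F_{t-1}(\delta \sigma \times I)$ and $\mathrm{gglb}(\Delta, b)$ to the cone on $\mathrm{gglb}(\Delta, b)$ with apex~$b$. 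None of these ingredients is sensitive to the number of colours: a simple contraction of any colour still involves exactly two vertices of $Y$, disjoint simple contractions still admit a global greatest lower bound (Lemma~\ref{remark2}), and the support bound in Lemma~\ref{remark2} is additive over disjoint summands. Consequently the recursion $\nu_r(0) = \mu_r(0) = r+4$ and $\nu_r(t) = \mu_r(t) = 2 + (t+1)\mu_r(t-1)$ goes through unchanged for $s = 3$.

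Combining these two paragraphs yields the corollary with $\alpha(t) = \nu_{8t}(t)$. The only place one might worry is the dimension-agnosticity of the gluing construction, since in three colours the geometric picture of an enveloping stack is more intricate; but because the construction of $F_t$ only manipulates simple contractions and global greatest lower bounds of mutually disjoint contractions, the three-dimensional geometry never enters. In other words, all the genuine three-dimensional combinatorics has already been absorbed into Theorem~\ref{modified3} and Corollary~\ref{modified30}, so the remaining step is purely a mechanical transcription of Brown's homotopy construction. I therefore expect no real obstacle in writing down the proof beyond checking that each line of the original argument in Lemma~\ref{important32} reads identically in the three-colour setting.
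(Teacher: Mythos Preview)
Your proposal is correct and matches the paper's own proof, which simply says to follow the proofs of Theorem~\ref{important31} and Lemma~\ref{important32} with Proposition~\ref{oxford1} replaced by Corollary~\ref{modified30}. You have spelled out precisely what that substitution entails, including the key observation that the construction in Lemma~\ref{important32} is insensitive to the number of colours and hence the same recursion for $\nu_r(t)$ works, yielding $\alpha(t)=\nu_{8t}(t)$.
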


\begin{proof} Follow the proofs of Theorem \ref{important31} and Lemma \ref{important32}  substituting Proposition  \ref{oxford1} with Corollary \ref{modified30}.
\end{proof}

\begin{theorem} \label{s=3} The Brin group $3V$ on 3 colours  of arity 2 is of type $\F_\infty$.
\end{theorem}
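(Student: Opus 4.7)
The plan is to mirror the argument that gave Theorem \ref{main2} for $2V$, replacing each $s=2$ ingredient with its $s=3$ counterpart that has already been established earlier in the paper. The verification of Brown's criterion \cite[Cor.~3.3]{Brown2} requires four things for the action of $3V$ on the complex $|\frak A|$: (i) vertex stabilisers are finite, (ii) $|\frak A|$ is contractible, (iii) the filtration $\{|\frak A_n|\}$ consists of $3V$-subcomplexes each of which is finite modulo $3V$, and (iv) the connectivity of the pair $(|\frak A_{n+1}|,|\frak A_n|)$ tends to infinity with $n$.

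For the first three conditions I would simply invoke Remark \ref{finitestab}, Lemma \ref{contractibility} and Lemma \ref{finitemodsV}, noting that all three were proved for the generalised Higman algebra on an arbitrary number $s$ of colours (in particular for $s=3$) and therefore transfer to $3V$ without modification. The whole novelty of the $s=3$ case lies in (iv), and the machinery needed for it has already been set up in the last section: Theorem \ref{modified3} produces the order reversing poset map $M$ with $M(\sigma)\in C_{8t}$, Corollary \ref{modified30} packages this into a map $f_t:|K_Y|^t\to|K_Y|$ homotopic to the inclusion with $f_t(\sigma)\subseteq\Sigma^t_{8t}$, and Corollary \ref{important-135} then gives a function $\alpha(t)$ such that $|K_Y|^t\hookrightarrow|K_Y|$ is null-homotopic as soon as $|Y|\geq\alpha(t)$. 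Consequently $|K_Y|$ is $t$-connected for $|Y|\geq\alpha(t)$, which is the $s=3$ analogue of Corollary \ref{connectivity1}.

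With this three-dimensional connectivity statement in hand, the argument of Theorem \ref{conectivity} goes through verbatim: the space $|\frak A_{n+1}|$ is built from $|\frak A_n|$ by attaching, for each $Y\in\frak A_{n+1}\setminus\frak A_n$, a cone over $K_Y$ with apex $Y$; since $K_Y$ is $t$-connected whenever $n+1\geq\alpha(t)$, the pair $(|\frak A_{n+1}|,|\frak A_n|)$ is $t$-connected, and hence its connectivity tends to infinity. Putting (i)--(iv) together and applying \cite[Cor.~3.3]{Brown2} yields that $3V$ is finitely presented and of type $\FP_\infty$, hence of type $\F_\infty$.

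I do not expect any serious obstacle at this stage, since the combinatorial and geometric work for $s=3$ -- namely the classification of $*$-connected components of $\Gamma_A$ in Lemma \ref{graphs} and the careful choice of the designated edge $M(A)$ in the three steps of Theorem \ref{modified3} -- has already been carried out; only a short bookkeeping proof that assembles Lemma \ref{contractibility}, Lemma \ref{finitemodsV}, Remark \ref{finitestab} and Corollary \ref{important-135} into a verification of Brown's criterion remains.
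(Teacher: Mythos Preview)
Your proposal is correct and follows essentially the same approach as the paper: the paper's proof is a terse two-line argument that says ``follow the proof of Theorem~\ref{main2}, using Theorem~\ref{modified3} and Corollary~\ref{important-135},'' and you have simply unpacked this into the verification of the four hypotheses of Brown's criterion via Remark~\ref{finitestab}, Lemma~\ref{contractibility}, Lemma~\ref{finitemodsV}, and the $s=3$ connectivity machinery. There is no substantive difference.
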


\begin{proof} The proof follows the proof of Theorem
  \ref{main2}. The main point is the construction of the poset map $M$ of
  Theorem \ref{modified3}. Applying Corollary \ref{important-135}, the
  rest follows as before.
\end{proof}

\end{document}